\newcommand{\N}{\mathbb{N}}
\newcommand{\R}{\mathbb{R}}
\newcommand{\C}{\mathbb{C}}
\DeclarePairedDelimiter\ceil{\lceil}{\rceil}
\DeclarePairedDelimiter\floor{\lfloor}{\rfloor}
\DeclarePairedDelimiter\abs{|}{|}
\DeclarePairedDelimiter\parens{(}{)}
\DeclarePairedDelimiter\set{\{}{\}}
\DeclarePairedDelimiterX\setof[2]{\{}{\}}{#1\,:\,#2}
\DeclareDocumentCommand{\shad}{O{q}}{\partial_{#1}}
\DeclareDocumentCommand{\upshad}{O{t}}{U^{#1}}
\DeclareDocumentCommand{\k}{O{t}}{k^{#1}}
\DeclareDocumentCommand{\K}{O{t}}{\mathcal{K}^{#1}}
\DeclareDocumentCommand{\S}{O{t}}{\mathcal{S}^{#1}}
\DeclareDocumentCommand{\layer}{O{s}O{\N}}{\binom{#2}{#1}}
\DeclareDocumentCommand{\nlayer}{O{n} m}{\binom{[{#1}]}{#2}}
\DeclareDocumentCommand{\C}{O{s}}{\mathcal{C}_{#1}}
\DeclareDocumentCommand{\iseq}{m m}{n_{#1},n_{{#1}-1},\dots,n_{{#1}-{#2}+1}}
\DeclareDocumentCommand{\is}{O{s}O{s}O{\ell}}{[\iseq{#2}{#3}]_{#1}}
\DeclareDocumentCommand{\seq}{O{s}O{\ell}}{(\iseq{#1}{#2})}
\newcommand{\st}{\ \text{s.t.}\,}
\newcommand{\divides}{\mathrel{\bigm|}}
\newtheorem{thm}{Theorem}[section]
\newtheorem*{thm*}{Theorem}
\newtheorem{cor}[thm]{Corollary}
\newtheorem{conj}[thm]{Conjecture}
\theoremstyle{definition}
\newtheorem{defn}[thm]{Definition}
\newtheorem{obs}[thm]{Observation}
\newtheorem{case}{Case}
\numberwithin{case}{thm}
\numberwithin{subcase}{case}
\newtheorem{question}[thm]{Question}
\newcommand{\cH}{\mathcal{H}}
\newcommand{\cN}{\mathcal{N}}
\newcommand{\U}{\mathcal{U}}
\newcommand{\scount}[2]{\cN(#1,#2)}
\newcommand{\vsub}[2]{#1^{\downarrow#2}}
\DeclareMathOperator{\cd}{cd}
\DeclareMathOperator{\Dom}{Dom}
\DeclareMathOperator{\dom}{dom}
\renewcommand{\ex}{\mathrm{ex}}
\DeclareMathOperator{\mex}{\mathrm{mex}}
\begin{document}
\pagestyle{plain}

\title{A localized approach to generalized Tur\'an problems}
\author{
Rachel Kirsch\\
George Mason University\\
\texttt{rkirsch4@gmu.edu}
\and
JD Nir\\
Toronto Metropolitan University\\
\texttt{jd.nir@torontomu.ca}}
\date{April 10, 2023}

\maketitle

\begin{abstract}
Generalized Tur\'an problems ask for the maximum number of copies of a graph $H$ in an $n$-vertex, $F$-free graph, denoted by $\ex(n,H,F)$. We show how to extend the new, localized approach of Brada\v{c}, Malec, and Tompkins to generalized Tur\'{a}n problems. We weight the copies of $H$ (typically taking $H=K_t$), instead of the edges, based on the size of the largest clique, path, or star containing the vertices of the copy of $H$, and in each case prove a tight upper bound on the sum of the weights. A consequence of our new localized theorems is an asymptotic determination of $\ex(n,H,K_{1,r})$ for every $H$ having at least one dominating vertex and $\mex(m,H,K_{1,r})$ for every $H$ having at least two dominating vertices.
\end{abstract}

\section{Introduction}

Extremal graph theory is often considered the study of how easily measured global graph parameters, such as the numbers of vertices and edges in a graph, influence its local substructures~\cite{rD17}. An archetypical result due to Tur\'an describes which size cliques a graph is guaranteed to contain based on its order and size:
\begin{thm*}[Tur\'an~\cite{pT41}]
Let $n, r \ge 1$ be integers. If $G$ is a $K_{r+1}$-free graph on $n$ vertices (that is, no subgraph of $G$ is isomorphic to the complete graph $K_{r+1}$), then $G$ contains at most $\frac{n^2}{2}(1-\frac{1}{r})$ edges. This is denoted
\[ \ex(n,K_{r+1}) \le \frac{n^2}{2}\parens[\Big]{1-\frac{1}{r}}. \]

Furthermore, the $K_{r+1}$-free graph on $n$ vertices with the greatest number of edges is the Tur\'an graph, $T_r(n)$, in which the vertices of the graph are partitioned into $r$ parts of sizes as close to equal as possible, and vertices are adjacent if and only if they are in different parts.
\end{thm*}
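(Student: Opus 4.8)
The plan is to prove the extremal (``Furthermore'') statement directly by a symmetrization argument, which simultaneously yields the edge bound. Fix $n$ and $r \ge 1$, note that $T_r(n)$ is $r$-partite and hence $K_{r+1}$-free, and let $G$ be a $K_{r+1}$-free graph on $n$ vertices with the maximum possible number of edges. It then suffices to show $G \cong T_r(n)$, since the inequality $e(T_r(n)) \le \frac{n^2}{2}(1-\frac1r)$ will follow from a routine computation once the structure is pinned down.

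The first step is to show that $G$ is a complete multipartite graph, i.e.\ that the relation ``$u$ and $v$ are non-adjacent'' is an equivalence relation on $V(G)$; it is clearly reflexive and symmetric, so the content is transitivity. Suppose for contradiction there are vertices $u, v, w$ with $uv, vw \notin E(G)$ but $uw \in E(G)$, and write $d(\cdot)$ for degree. If $d(v) < d(u)$, delete $v$ and insert a new vertex $v'$ with $N(v') = N(u)$ (a ``clone'' of $u$); the new graph $G'$ is still $K_{r+1}$-free, because $v'$ and $u$ are non-adjacent so any clique uses at most one of them, and a clique through $v'$ yields one through $u$ of the same size in $G$. But $e(G') = e(G) - d(v) + d(u) > e(G)$, contradicting maximality; the symmetric move rules out $d(v) < d(w)$. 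Hence $d(v) \ge d(u)$ and $d(v) \ge d(w)$, and now replacing \emph{both} $u$ and $w$ by clones of $v$ gives a $K_{r+1}$-free graph (the three vertices $u', w', v$ are pairwise non-adjacent) with $e(G') = e(G) - (d(u) + d(w) - 1) + 2d(v) > e(G)$, the $-1$ accounting for the edge $uw$ counted in both $d(u)$ and $d(w)$; again a contradiction. Therefore non-adjacency is an equivalence relation, its classes are independent sets, every pair of vertices in distinct classes is adjacent, and $G$ is complete multipartite — with at most $r$ parts, since $r+1$ parts would furnish a $K_{r+1}$.

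It remains to optimize over complete $r$-partite graphs on $n$ vertices. If $G$ has fewer than $\min(n,r)$ nonempty parts, some part has at least two vertices, and splitting one vertex off into a new (empty) part strictly increases the edge count; and if two parts have sizes differing by at least $2$, say $|A| \ge |B| + 2$, moving one vertex from $A$ to $B$ changes the edge count by $|A| - 1 - |B| > 0$. So in the extremal configuration the part sizes $n_1, \dots, n_r$ are as equal as possible, i.e.\ $G \cong T_r(n)$. Finally $e(T_r(n)) = \binom n2 - \sum_{i} \binom{n_i}{2}$, and by convexity of $x \mapsto \binom x2$ subject to $\sum_i n_i = n$ this is at most $\binom n2 - r\binom{n/r}{2} = \frac{n^2}{2}\bigl(1-\frac1r\bigr)$, giving the stated bound.

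I expect the only genuine subtlety to be the bookkeeping in the cloning step: verifying carefully that the replacement preserves $K_{r+1}$-freeness, and getting the $-1$ correction right in the double-replacement case so that the strict inequality $2d(v) \ge d(u)+d(w) > d(u)+d(w)-1$ actually delivers a contradiction. The optimization over part sizes and the closing convexity estimate are standard and require no new ideas. (An alternative route would be induction on $r$ with the block recursion $e(T_r(n)) = e(T_r(n-r)) + \binom r2 + (r-1)(n-r)$, but symmetrization has the advantage of yielding the uniqueness of the extremal graph essentially for free.)
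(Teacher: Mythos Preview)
Your proof is correct and is the classical Zykov symmetrization argument. The paper, however, does not give a direct proof of Tur\'an's theorem: it is stated as background, and the edge bound is recovered only indirectly, by noting that in a $K_{r+1}$-free graph every edge has weight $w(e) \ge \frac{r}{2(r-1)}$ in Brada\v{c}'s theorem, so $|E(G)| \cdot \frac{r}{2(r-1)} \le \sum_e w(e) \le n^2/4$. The extremal characterization is not derived in the introduction at all; it would follow only later as the $t=2$ case of the paper's local Zykov theorem (\cref{thm:local_zykov}) together with its corollary \cref{thm:zykov}.

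That said, the underlying engine is the same: the paper's proof of \cref{thm:local_zykov} is exactly Zykov symmetrization, but applied to the weighted sum $\sum_T w_G(T)$ rather than to the raw edge count. Your argument is the ``unweighted, $t=2$'' specialization of that proof. What the paper's route buys is the localized statement (which is strictly stronger and feeds into further applications); what your direct route buys is a self-contained proof of both the bound and the uniqueness of $T_r(n)$ without passing through any weight function. Your bookkeeping in the double-clone step and the convexity estimate at the end are both fine.
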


Tur\'an's theorem has been generalized by many authors. In 2015, Alon and Shikhelman~\cite{AS16}, expanding on several sporadic results (e.g.~\cite{BG08, GPS91, HHKNR13}), introduced the generalized Tur\'an number $\ex(n,H,F)$, which denotes the greatest number of subgraphs of an $F$-free graph on $n$ vertices that are isomorphic to $H$. When maximizing the number of copies of $H\ne K_2$, it is possible to fix the number of edges instead of the number of vertices. Radcliffe and Uzzell~\cite{RU18} introduced the generalized edge Tur\'an number, $\mex(m,H,F)$, which denotes the greatest number of subgraphs that are isomorphic to $H$ in an $F$-free graph on $m$ edges. 

The quantities $\ex(n,H,F)$ and $\mex(m,H,F)$ have motivated many interesting results; see~\cite{CC20,CC21, dG22, GP22, GS18, MNNRW22} for an (incomplete) sample. However, these problems often stretch the notion of ``easily measurable'' properties on which extremal graph theory is based. Though it is not known to be NP-complete, counting the number of subgraphs of $G$ that are isomorphic to $H$ is considered a challenging computation problem. Therefore, practically, it may be difficult to determine whether a specific $G$ contains a forbidden $F$ even when the exact value of $\ex(n,H,F)$ is known.

Recently, Brada\v{c}~\cite{dB22}, based on a conjecture of Balogh and Lidick\'{y}, gave a fundamentally different generalization of Tur\'an's theorem:

\begin{thm*}[Brada\v{c}~\cite{dB22}]
Let $G$ be a graph on $n$ vertices. For each edge $e \in E(G)$, define its weight $w(e)$ as
\[ w(e) = \frac{k}{2(k-1)} \]
where $k$ is the size of the largest clique in $G$ containing $e$. Then
\[ \sum_{e \in E(G)} w(e) \le n^2/4. \]
\end{thm*}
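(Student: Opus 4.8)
The plan is to prove the inequality by induction on $n$, and the crucial decision is \emph{what} to delete: a one-vertex deletion is too lossy (already for $G=K_n$, where $\sum_{e}w(e)=\binom n2\cdot\frac{n}{2(n-1)}=n^2/4$ exactly, passing to $K_{n-1}$ and bounding the edges at the removed vertex via the monotonicity of $w$ only yields the bound $(n^2+1)/4$). Instead, if $G$ has an edge, I would let $Q$ be a \emph{maximal} clique of $G$, with $|Q|=q\ge 2$, and put $G'=G-Q$, so $|V(G')|=n-q<n$. Then I would split $\sum_{e\in E(G)}w(e)$ into the edges lying inside $G'$, the edges lying inside $Q$, and the edges of the bipartite graph between $Q$ and $G'$, and bound each part.

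For the edges inside $G'$: every clique of $G'$ is also a clique of $G$, so for $e\in E(G')$ the largest clique through $e$ in $G'$ is no larger than the one in $G$; since $x\mapsto\frac{x}{2(x-1)}$ is decreasing on $[2,\infty)$, the weight of $e$ computed in $G$ is at most its weight computed in $G'$, so by the induction hypothesis $\sum_{e\in E(G')}w(e)\le (n-q)^2/4$. For the edges inside $Q$: each such edge lies in a clique of size at least $q$, hence has weight at most $\frac{q}{2(q-1)}$, and summing over the $\binom q2$ edges of $Q$ gives at most $q^2/4$. For the bipartite part: fix $v\in V(G')$ and let $d_v=|N(v)\cap Q|$. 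Maximality of $Q$ forces $d_v\le q-1$ (otherwise $Q\cup\{v\}$ would be a larger clique), and $\{v\}\cup(N(v)\cap Q)$ is a clique of size $d_v+1$ containing every edge from $v$ into $Q$; hence each of those $d_v$ edges has weight at most $\frac{d_v+1}{2d_v}$, so together they contribute at most $\frac{d_v+1}{2}\le\frac q2$. Summing over $v\in V(G')$ bounds the bipartite part by $(n-q)q/2$.

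Adding the three estimates gives
\[ \sum_{e\in E(G)}w(e)\ \le\ \frac{(n-q)^2}{4}+\frac{q^2}{4}+\frac{(n-q)q}{2}\ =\ \frac{\big((n-q)+q\big)^2}{4}\ =\ \frac{n^2}{4}, \]
completing the induction, the base case being edgeless graphs. The step I expect to be the main obstacle is precisely recognizing that one must remove an entire \emph{maximal} clique: this is exactly what makes the inductive term $(n-q)^2/4$, the internal term $q^2/4$, and the bipartite term $(n-q)q/2$ reassemble into a perfect square, and the engine behind the bipartite estimate is the small observation that the neighbours of $v$ inside $Q$ are pairwise adjacent, so that $v$ together with them forms a clique large enough to pin down those weights. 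A secondary point worth noting is that the bipartite bound is deliberately wasteful (a $\tfrac12$ is discarded for every $v$, and more when $d_v<q-1$), yet it still closes exactly, as it must, since the complete multipartite (Tur\'an) graphs attain equality throughout.
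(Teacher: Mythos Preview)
Your argument is correct. Removing a maximal clique $Q$ of size $q$, the three estimates
\[
\sum_{e\subseteq G-Q} w(e)\le \frac{(n-q)^2}{4},\qquad
\sum_{e\subseteq Q} w(e)\le \frac{q^2}{4},\qquad
\sum_{v\in V(G)\setminus Q} d_v\cdot\frac{d_v+1}{2d_v}\le \frac{(n-q)q}{2}
\]
are all valid exactly as you state them (the first by monotonicity of $x\mapsto x/(2(x-1))$ and induction, the second because every edge of $Q$ sits in a $q$-clique, the third because $\{v\}\cup(N(v)\cap Q)$ is a clique and maximality of $Q$ forces $d_v\le q-1$), and they recombine into $n^2/4$. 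The only quibble is cosmetic: your closing remark about ``a $\tfrac12$ discarded for every $v$'' is not quite what happens, but it has no bearing on the proof.

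This is, however, a genuinely different route from the one the paper takes. The paper does not prove Brada\v{c}'s theorem directly; it proves the clique generalization (Theorem~\ref{thm:local_zykov}) by \emph{Zykov symmetrization} and then specializes to $t=2$. There one first checks the inequality for complete multipartite graphs by a symmetric-function maximization, and then shows that any non-multipartite $G$ can be modified, via the standard ``replace $y$ by a twin of $x$'' or ``replace $x,z$ by two twins of $y$'' moves, into a graph with strictly larger total weight. Your approach is more elementary and self-contained for $t=2$: no optimization over part sizes, no case analysis on the symmetrization move, just a clean three-term split and a perfect-square identity. What the paper's approach buys in return is that it extends verbatim to weighting $t$-cliques for all $t$ and yields the full equality characterization (balanced complete multipartite graphs); your maximal-clique deletion, by contrast, would have to track $t$-cliques according to how many of their vertices lie in $Q$, and the bookkeeping no longer collapses to a single binomial identity.
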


\noindent This theorem has since been applied to Ramsey-Tur\'{a}n problems in \cite{BBL23, BCMM22}.

Brada\v{c}'s result generalizes Tur\'an's theorem in the following sense: if it is known that $G$ is $K_{r+1}$-free, then, noting $w(e)$ is decreasing in $k$, we see $w(e) \ge r/(2(r-1))$ for every edge $e$. Thus
\[ |E(G)| \cdot \frac{r}{(2(r-1))} \le \sum_{e \in E(G)} w(e) \le \frac{n^2}{4} \implies |E(G)| \le \frac{n^2}{2}\parens[\Big]{1-\frac{1}{r}}. \]

The novelty of Brada\v{c}'s result is in the local nature of the weight function. Rather than counting the total number of edges in the entire graph, a global property, the weight we assign to each edge depends only on the neighborhoods of the vertices of that edge which may be computed more efficiently.

Inspired by Brada\v{c}'s result, Malec and Tompkins~\cite{MT22} investigated other results which could be ``localized'' in a similar fashion. In addition to giving a new proof of Brada\v{c}'s result, they proved a local version of another celebrated extremal graph theory result of Erd\H{o}s and Gallai, as well as the LYMB inequality (a generalization of Sperner's theorem on boolean lattices), a generalization of the Erd\H{o}s-Ko-Rado theorem, and a theorem of Erd\H{o}s and Szekeres on sequences.

Both Tur\'an's theorem and the theorem of Erd\H{o}s and Gallai considered by Malec and Tompkins tell us something about the graph based on the number of edges it contains. Keeping in mind that an edge is a clique containing two vertices, natural generalizations of both results to cliques of larger size have been investigated.
In this article, we show that these results, too, admit localized generalizations. In fact, we prove several local results by weighting the cliques or other subgraphs of $G$ of a given size based on the maximum size of various substructures that contain them. In some cases these ``generalizations'' actually establish new extremal results.

Our results follow a general framework. Each theorem concerns a target subgraph $H$, which in many cases is a clique of some fixed size, and a family $\mathcal{F} = \{F_1, F_2, \ldots\}$ of graphs in which $F_i \subseteq F_{i+1}$. We first establish a size function which, given a copy of $H$ in $G$, returns the largest $F_i$ such that some subgraph of $G$ isomorphic to $F_i$ contains $H$ in some meaningful way. Then we define a weight function which depends only on the size function of $H$ and prove a bound on the sum of the weights of every copy of $H$ in $G$. In each case we show that the weight function is a decreasing function of the size function, so a global upper bound on the size function implies a lower bound on the sum of the weights, and we recover a ``non-localized'' theorem.

The rest of this paper is arranged as follows. We begin with some notational conventions and preliminary results in \cref{sec:preliminaries}. Then in \cref{sec:zykov}, we weight $t$-cliques in $G$ by the size of the largest clique containing them to generalize Zykov's theorem, itself a direct extension of Tur\'an's theorem. We weight $t$-cliques by the longest path containing their vertices in \cref{sec:paths}, considering graphs of fixed order in \cref{subsec:pathorder} and graphs of fixed size in \cref{subsec:pathsize}. We weight a broad class of graphs, including cliques, by the size of the largest star containing their vertices in \cref{subsec:graphstars}, in which we prove a family of novel generalized Tur\'an and edge Tur\'an results. We also give a hypergraph version of one of these localized results in \cref{subsec:hypergraphs}. We conclude with some open questions in \cref{sec:conclusion}. 

\section{Preliminaries} \label{sec:preliminaries}

\subsection{Notation}
In addition to standard graph theoretic notation (see~\cite{bB12}, for example), we establish the following conventions. We define the path graph $P_n$ to have $n$ vertices and $n-1$ edges and the star graph $S_r$ to have $r$ leaves (and thus $r+1$ total vertices). Given graphs $G$ and $H$, we let $\scount{H}{G}$ denote the number of subgraphs of $G$ that are isomorphic to $H$. If $\scount{H}{G} = 0$, we say $G$ is $H$-free. As mentioned in the introduction, if $H$ and $F$ are graphs, then
\[ \ex(n,H,F) = \max\set{\scount{H}{G} : |V(G)| = n \text{ and } \text{$G$ is $F$-free}}\]
and
\[ \mex(m,H,F) = \max\set{\scount{H}{G} : |E(G)| = m \text{ and } \text{$G$ is $F$-free}}. \]

We also have need to refer to the copies of $H$ in $G$. When we do so for cliques, we refer to the sets of vertices that span a complete subgraph and write
\[ \K(G) = \set{S \subseteq V(G) : G[S] \cong K_t}.  \]
For a more general graph $H$, we write $\cH(G)$ to refer to the set of (not necessarily induced) subgraphs of $G$ that are isomorphic to $H$.

The size function and weight function(s) in each section are denoted by the notation shown below.

\begin{center}\begin{tabular}{lcccc}
 & Cliques & Paths & Stars (graphs) & Stars (hypergraphs)\\
Section & \ref{sec:zykov} & \ref{sec:paths} &  \ref{subsec:graphstars} & \ref{subsec:hypergraphs}\\
Size function & $\alpha_G$ & $\beta_G$ & $\theta_G$ & $x$\\
Weight function(s) &  $w_G$ & $p_G$, $p'_G$ & $s_G^u$ & $s$
\end{tabular}\end{center}

\subsection{Generalized binomial coefficients}
We use generalized binomial coefficients when working with paths in \cref{sec:paths} and hypergraphs in \cref{sec:stars}. For a real number $x \ge k-1$ and a natural number $k$, the \emph{generalized binomial coefficient} $\binom{x}{k}$ is defined as $(x)(x-1)\cdots(x-k+1)/k!$. When $x < k-1$, we set $\binom{x}{k} = 0$. The function $\binom{x}{k}$ is weakly increasing for all real numbers $x$ and strictly increasing on $x \ge k-1$.

\begin{obs}\label{obs:falling_fact} 
For all $x \in \R$ and $n \in \N$, we have 
\[ 2^n \binom{x}{n} \le \binom{2x}{n}, \]
with strict inequality when $2x+1 > n \ge 2$.
\end{obs}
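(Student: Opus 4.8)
The plan is to reduce the inequality to a termwise comparison of two products of equal length, after first disposing of the degenerate range of $x$. Throughout I will use that $\binom{y}{k}\ge 0$ for every real $y$, that $\binom{y}{k}$ is strictly increasing on $y\ge k-1$, and that $\binom{n-1}{n}=0$. Note also that for $n\le 1$ the asserted inequality is an equality and strictness is not claimed (since $2x+1>n\ge 2$ forces $n\ge 2$), so I may assume $n\ge 2$ whenever convenient.

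First I would handle the case $x<n-1$. Here the left-hand side is $0$ by the truncation convention, so $2^{n}\binom{x}{n}=0\le\binom{2x}{n}$ is immediate. For the strict inequality, observe that $2x+1>n$ is the same as $2x>n-1$, so $\binom{2x}{n}>\binom{n-1}{n}=0$ by strict monotonicity, and the inequality is strict.

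Next, the main case $x\ge n-1$. Then $2x\ge 2(n-1)\ge n-1$ as well, so both sides are given by the falling-factorial formula, and I would rewrite them as
\[
n!\,\Bigl(2^{n}\binom{x}{n}\Bigr)=\prod_{i=0}^{n-1}(2x-2i),\qquad n!\,\binom{2x}{n}=\prod_{i=0}^{n-1}(2x-i).
\]
For each $i$ with $0\le i\le n-1$ one has $0\le 2x-2(n-1)\le 2x-2i\le 2x-i$, so corresponding factors are nonnegative and ordered, which yields $2^{n}\binom{x}{n}\le\binom{2x}{n}$ at once. For strictness when $n\ge 2$, compare the $i=1$ factors: $2x-2<2x-1$. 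Since every factor $2x-i$ with $0\le i\le n-1$ is at least $2x-n+1\ge n-1\ge 1>0$, the right product is positive; if all the factors $2x-2i$ are also positive then the strict termwise comparison forces a strict product inequality, and if instead $x=n-1$ exactly then the left product is $0$ while the right is positive, so strictness holds in every subcase.

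Since the statement is elementary, I do not anticipate a genuine obstacle; the only place that needs care is the boundary behavior of the truncated binomial coefficient — the subcase where $x$ lies just below $n-1$ and the subcase $x=n-1$ — which is exactly where the hypotheses $2x+1>n$ and $n\ge 2$ get used.
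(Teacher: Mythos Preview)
Your proof is correct and follows essentially the same approach as the paper: both split into the case where the left side vanishes (so the inequality is immediate, with strictness when $2x>n-1$) and the case where both sides are given by falling factorials, reducing the claim to the termwise comparison $(2x)(2x-2)\cdots(2x-2n+2)\le(2x)(2x-1)\cdots(2x-n+1)$. Your treatment is somewhat more detailed at the boundary $x=n-1$ and in verifying positivity of the right-hand factors, but the argument is the same.
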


\begin{proof} The right side is always non-negative and is positive for $2x > n-1$. When $x \le n-1$, the left side is zero. When $x > n-1$, the inequality is equivalent to
\[(2x)(2x-2)\cdots(2x-2n+2) \le (2x)(2x-1)\cdots(2x-n+1),\]
so is strict when $n \ge 2$.
\end{proof}

In \cref{sec:paths} we also use the following observation and theorem.

\begin{obs}
\label{lem:LKKgraph}
	Let $G$ be a graph having at least one edge. Write $\abs{E(G)}$ in the form $\binom{x}{2}$, where $x \ge 2$ is real. Then $\abs{V(G)} \ge x$.
\end{obs}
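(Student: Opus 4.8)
The plan is to compare $\abs{E(G)}$, written as $\binom{x}{2}$, against the trivial upper bound $\binom{n}{2}$ on the number of edges of an $n$-vertex graph, where $n = \abs{V(G)}$, and then read off $x \le n$ from the monotonicity of the generalized binomial coefficient.

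First I would note that since $G$ has at least one edge, $n \ge 2$; in particular the generalized binomial coefficient $\binom{n}{2}$ coincides with the ordinary one, and both $x$ and $n$ lie in the range $\{y : y \ge k-1\} = \{y : y \ge 1\}$ on which $\binom{y}{2}$ is strictly increasing (as recorded just before \cref{obs:falling_fact}). Next, the elementary bound $\abs{E(G)} \le \binom{n}{2}$ becomes $\binom{x}{2} \le \binom{n}{2}$. Since $\binom{\cdot}{2}$ is strictly increasing on $[1,\infty)$ — hence order-reflecting there — and $x, n \ge 2 > 1$, this inequality forces $x \le n$, which is exactly the claim.

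The only point requiring care is that $\binom{x}{2} = x(x-1)/2$ is not monotone on all of $\R$ (it is decreasing on $(-\infty, 1/2]$), so the comparison must be confined to the region $x \ge 1$; this is automatic here because $x \ge 2$ by hypothesis and $n \ge 2$ since $G$ has an edge. I do not expect any substantive obstacle beyond this bookkeeping.
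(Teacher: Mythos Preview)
Your proof is correct and uses essentially the same idea as the paper: bound $\binom{x}{2} = \abs{E(G)} \le \binom{n}{2}$ and invoke strict monotonicity of $\binom{y}{2}$ on $y \ge 1$ to conclude $x \le n$. The paper phrases this by contrapositive (if $n < x$ then $n \le \ceil{x}-1$ and $\abs{E(G)} \le \binom{\ceil{x}-1}{2} < \binom{x}{2}$), but your direct version is if anything slightly cleaner, since the detour through $\ceil{x}-1$ is unnecessary.
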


\begin{proof}
    If $\abs{V(G)} < x$ then $\abs{V(G)} \le \ceil{x}-1$, so $\abs{E(G)} \le \binom{\ceil{x}-1}{2} < \binom{x}{2}$. Here we used that the generalized binomial coefficient $\binom{y}{2}$ is strictly increasing for all $y \ge 1$.
\end{proof}

\begin{thm}[Lov\'{a}sz \cite{L79}]\label{thm:LKKgraph}
	Let $t \ge 2$. Let $G$ be a graph. Write the number of edges of $G$ in the form $\binom{x}{2}$, where $x \ge 1$ is real. Then $\scount{K_t}{G} \le \binom{x}{t}$.
\end{thm}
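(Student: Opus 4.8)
The plan is to deduce this clique version from the classical Kruskal--Katona theorem, in the real-parameter form of Lov\'asz \cite{L79}: if $\A$ is a family of $r$-element sets with $|\A| = \binom{z}{r}$ for a real $z \ge r$, then its lower shadow $\partial\A = \set{B : |B| = r-1,\ B\subseteq A\text{ for some }A\in\A}$ satisfies $|\partial\A|\ge\binom{z}{r-1}$, and hence $\bigl|\partial^{(s)}\A\bigr|\ge\binom{z}{r-s}$ for $0\le s\le r$.

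First I would fix the family to feed into this. Let $\A=\set{S\subseteq V(G):G[S]\cong K_t}$, so that $|\A|=\scount{K_t}{G}$; we may assume $\A\ne\emptyset$, since otherwise the claim reads $0\le\binom{x}{t}$. Write $|\A|=\binom{y}{t}$ with $y$ real. As $|\A|$ is a positive integer and $\binom{y}{t}<1$ whenever $y<t$, necessarily $y\ge t\ge2$, so the iterated Kruskal--Katona bound applies to $\A$.

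The only genuinely graph-theoretic step is the observation that the shadow of a clique family is again a clique family: if $T\subseteq S$ with $|S\setminus T|=1$ and $G[S]\cong K_t$, then $G[T]$ is an induced subgraph of a complete graph, hence $G[T]\cong K_{t-1}$. Iterating $t-2$ times, $\partial^{(t-2)}\A$ is a family of $2$-element sets, each of which spans an edge of $G$, so $\partial^{(t-2)}\A\subseteq E(G)$. Therefore
\[ \binom{y}{2}\le\bigl|\partial^{(t-2)}\A\bigr|\le|E(G)|=\binom{x}{2}. \]
Since $\binom{\cdot}{2}$ is strictly increasing on $[1,\infty)$ and $y\ge2$, $x\ge1$, this forces $y\le x$; and since $\binom{\cdot}{t}$ is weakly increasing, $\scount{K_t}{G}=\binom{y}{t}\le\binom{x}{t}$, as required.

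I expect essentially all of the real work to sit outside this reduction, in the Kruskal--Katona theorem itself, which I am treating as a black box. If one instead wanted a self-contained argument, the natural alternative is a double induction on $t$ and on $|V(G)|$: delete a minimum-degree vertex $v$, use $\scount{K_t}{G}=\scount{K_t}{G-v}+\scount{K_{t-1}}{G[N(v)]}$, and apply the inductive hypothesis to each term, noting $\deg(v)\ge x_v$ where $\binom{x_v}{2}=e\bigl(G[N(v)]\bigr)$ (by \cref{lem:LKKgraph}) and $\deg(v)\le x-1$ (since $v$ has at most the average degree $2\binom{x}{2}/|V(G)|$ and $|V(G)|\ge x$, again by \cref{lem:LKKgraph}); one must then verify the resulting inequality between generalized binomial coefficients, whose extremal instance $G=K_n$ is exactly Pascal's identity $\binom{x-1}{t}+\binom{x-1}{t-1}=\binom{x}{t}$. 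The obstacle on this route is the convexity/monotonicity bookkeeping for real binomials, which is why I would favour the shadow argument.
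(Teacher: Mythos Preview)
Your argument is correct. The paper does not actually prove \cref{thm:LKKgraph}; it is stated as a result of Lov\'asz \cite{L79} and used as a black box, with the remark that it is the graph case of \cref{thm:LKK}, which in turn is described as ``an immediate consequence of Lov\'asz' approximate version of the Kruskal--Katona theorem.'' Your deduction---apply the Lov\'asz form of Kruskal--Katona to the $t$-clique family, observe that iterated shadows of clique families stay inside clique families, and compare at level~$2$---is exactly that immediate consequence, so your write-up is consistent with, and more explicit than, what the paper provides.
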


We also use generalized binomial coefficients in \cref{subsec:hypergraphs}, where we introduce hypergraph definitions and notation before stating \cref{thm:LKK}, a bound on the number of $t$-cliques in a $q$-uniform hypergraph based on the number of edges, which is a version of Lov\'{a}sz's approximate form of the Kruskal-Katona theorem. \cref{thm:LKKgraph} is the special case of \cref{thm:LKK} corresponding to graphs.

\section{Weighting by Maximum Clique Size} \label{sec:zykov}

In this section we prove the following theorem which extends the localized version of Tur\'an's theorem in~\cite{MT22} by assigning weights to cliques of any size, rather than just edges. Then we show that it simultaneously extends a different extension of Tur\'an's theorem due to Zykov.

\begin{thm}\label{thm:local_zykov}
Let $t \ge 2$. For each $T \in \K(G)$, define 
\[ \alpha_G(T) = \max\set{k : T \subseteq V(S) \textnormal{ for some } S \subseteq G \st S \cong K_k} \quad \text{and} \quad w_G(T) = \frac{\alpha_G(T)^t}{\binom{\alpha_G(T)}{t}}. \]
Then $w_G(T)$ is well-defined and decreasing in $\alpha_G(T)$, and
\[ w(G) = \sum_{T \in \K(G)}w_G(T) \le n^t, \]
with equality if and only if $G$ is a balanced multipartite graph with at least $t$ parts.
\end{thm}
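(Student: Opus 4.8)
If $T\in\K(G)$ then $G[T]\cong K_t$, so the set defining $\alpha_G(T)$ is a nonempty finite set of integers containing $t$; hence $\alpha_G(T)\ge t$, $\binom{\alpha_G(T)}{t}\ge 1$, and $w_G(T)$ is well-defined. For the monotonicity, for an integer $k\ge t$ write
\[
\frac{k^t}{\binom{k}{t}}=\frac{t!\,k^{t-1}}{(k-1)(k-2)\cdots(k-t+1)}=t!\prod_{j=1}^{t-1}\frac{k}{k-j},
\]
and note each factor $k/(k-j)=1+j/(k-j)$ is positive and strictly decreasing in $k$ for $k\ge t>j$; a product of positive strictly decreasing functions is strictly decreasing, so $w_G(T)$ is strictly decreasing in $\alpha_G(T)$.

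\textbf{Reduction to complete multipartite graphs.} If $\omega(G)<t$ then $\K(G)=\emptyset$ and $w(G)=0\le n^t$, so assume $\omega(G)\ge t$. For non-adjacent $u,v\in V(G)$ let $G_{v\to u}$ be the graph obtained from $G$ by replacing $N_G(v)$ with $N_G(u)$ (making $u,v$ non-adjacent clones), and define $G_{u\to v}$ symmetrically. The crux is the inequality $\max\bigl(w(G_{v\to u}),w(G_{u\to v})\bigr)\ge w(G)$. To prove it, split $\K(G)$ into the $t$-cliques avoiding $\{u,v\}$, those through $u$, and those through $v$ (none contain both). In $G_{v\to u}$, the first two families are the same as in $G$, and for each such $T$ one has $\alpha_{G_{v\to u}}(T)\le\alpha_G(T)$: a clique of $G_{v\to u}$ containing $T$ either avoids $v$, so is already a clique of $G$, or contains $v$ (hence not $u$), and swapping $v$ for $u$ turns it into a clique of $G$ of the same size containing $T$; by monotonicity the weights only increase. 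Using $G_{v\to u}[V\setminus\{u\}]\cong G[V\setminus\{v\}]$, the map $T\mapsto(T\setminus\{v\})\cup\{u\}$ is a weight-nonincreasing bijection from the $t$-cliques of $G_{v\to u}$ through $v$ onto the $t$-cliques of $G$ through $u$. Summing the three families,
\[
w(G_{v\to u})\ \ge\ \sum_{u,v\notin T}w_G(T)+2\sum_{u\in T}w_G(T),\qquad w(G_{u\to v})\ \ge\ \sum_{u,v\notin T}w_G(T)+2\sum_{v\in T}w_G(T),
\]
and adding gives $w(G_{v\to u})+w(G_{u\to v})\ge 2w(G)$. Since there are finitely many $n$-vertex graphs, a maximizer $G$ of $w$ exists, and by the classical Zykov symmetrization argument some sequence of the above operations (applied whenever the graph is not complete multipartite, with ties broken to make progress) transforms it into a complete multipartite graph without decreasing $w$; hence the maximum of $w$ over $n$-vertex graphs is attained on a complete multipartite graph.

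\textbf{Complete multipartite case and equality.} Let $G$ be complete $r$-partite with part sizes $n_1,\dots,n_r$ summing to $n$. If $r<t$ then $w(G)=0$. If $r\ge t$, every $t$-clique $T$ extends (by one vertex from each unused part) to a clique of size $r$ and to no larger clique, so $\alpha_G(T)=r$ for all $T$ and
\[
w(G)=\scount{K_t}{G}\cdot\frac{r^t}{\binom{r}{t}}=e_t(n_1,\dots,n_r)\cdot\frac{r^t}{\binom{r}{t}},
\]
where $e_t$ is the $t$-th elementary symmetric polynomial. Maclaurin's inequality gives $e_t(n_1,\dots,n_r)/\binom{r}{t}\le(n/r)^t$ with equality iff $n_1=\cdots=n_r$, so $w(G)\le n^t$, and combined with the reduction this proves $w(G)\le n^t$ for every $n$-vertex graph. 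For equality: if $w(G)=n^t$ then the symmetrization chain ends at a complete multipartite $G^\ast$ with $w(G^\ast)=n^t$, which therefore has $r\ge t$ parts all of size $n/r$; tracing the chain back, one checks the displayed symmetrization inequalities are strict whenever the operation genuinely changes the graph (i.e. whenever $u,v$ do not already have a common neighbourhood), so no proper symmetrization occurred and $G$ was already balanced complete multipartite with at least $t$ parts. The converse is the computation above.

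\textbf{Main obstacle.} The substantive step is the symmetrization lemma in the weighted setting: because $\alpha_G(T)$ is a global quantity, one must verify carefully how it changes under cloning for each of the three families of $t$-cliques (the key observation being that cloning can only shrink the maximum clique containing a given $t$-clique, hence only raise its weight), and then determine exactly when equality can occur, which is what yields both the equality characterization and the termination of the symmetrization process.
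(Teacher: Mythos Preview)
Your argument for the inequality $w(G)\le n^t$ is correct in outline and uses the pairwise (two-vertex) variant of Zykov symmetrization, whereas the paper uses the three-vertex variant. The verification that $\alpha$ cannot increase under cloning, and the averaging trick $w(G_{v\to u})+w(G_{u\to v})\ge 2w(G)$, are fine; the termination (``ties broken to make progress'') is hand-wavy but can be made rigorous by the standard device of, from a maximizer, successively cloning all non-neighbours of a fixed vertex, then of a second vertex, and so on. The complete multipartite computation and the appeal to Maclaurin match the paper.

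The genuine gap is in the equality characterization. Your claim that ``the displayed symmetrization inequalities are strict whenever $u,v$ do not already have a common neighbourhood'' is false. Take $t=3$ and $G=2K_3$ on $\{u,a,b\}\cup\{v,c,d\}$; then $u\not\sim v$, $N(u)=\{a,b\}\ne\{c,d\}=N(v)$, yet $w(G_{v\to u})=w(G_{u\to v})=w(G)=54$. (Even the simpler example of a $K_4$ with two extra pendant edges $ua$, $vb$ already breaks your claim.) So from $w(G)=n^t$ you cannot conclude that no cloning step changed the graph. The paper circumvents this by first deleting every edge not contained in a $t$-clique (which leaves $w$ unchanged), and then using the three-vertex symmetrization: one finds $x\sim z$ with $x\not\sim y\not\sim z$, and either replaces $y$ by a clone of $x$ (when $w_G(x)>w_G(y)$), or replaces both $x$ and $z$ by clones of $y$. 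In the second case the strict increase is forced by the extra term $\sum_{x,z\in T}w_G(T)$, which is positive precisely because after the reduction the edge $xz$ lies in some $t$-clique. This strict increase is what pins down the equality case; your two-vertex averaging does not produce such a term, and the equality analysis needs to be redone along these lines.
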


Note that by setting $t=2$, we recover the result which is Theorem 1 of both \cite{dB22} and \cite{MT22}. 
\[ \sum_{T\in \K[2](G)} \frac{\alpha_G(e)^2}{\binom{\alpha_G(e)}{2}} = \sum_{e \in E(G)} \frac{2\alpha_G(e)}{\alpha_G(e)-1} \le n^2 \implies \sum_{e \in E(G)} \frac{\alpha_G(e)}{\alpha_G(e)-1} \le \frac{n^2}{2}. \]

\begin{proof}
First, $\alpha_G(T) \ge t$ because $G[T] \cong K_t$, and therefore $w_G(T)$ is well-defined. We observe $w_G(T)$ is decreasing in $\alpha_G(T)$ as we can write
\[ w_G(T) = t!\cdot\frac{\alpha_G(T)^t}{\prod_{i=0}^{t-1} (\alpha_G(T)-i)} = t!\cdot  \prod_{i=1}^{t-1} \parens[\Big]{1+\frac{i}{\alpha_G(T)-i}}, \]
which is a non-empty product of functions decreasing in $\alpha_G(T)$.

Let $G$ be a graph on $n \ge 3$ vertices. Note that if $G$ contains an edge $e$ that is not contained in any $t$-clique, then $e$ also is not contained in any larger clique, so $\K(G-e) = \K(G)$, $w_{G-e}(T) = w_G(T)$ for every $T \in \K(G)$, and $w(G) = w(G-e)$. Therefore we may assume that every edge in $G$ is contained in a $t$-clique. First assume there is $r \ge t$ such that $G$ is complete $r$-partite with (non-empty) parts $A_1, \ldots, A_r$. Then for each $T \in \K(G)$ we have $\alpha_G(T) = r$ and $w_G(T) = r^t/\binom{r}{t}$. This gives
\begin{align*}
w(G) &= \frac{r^t}{\binom{r}{t}}\scount{K_t}{G}\\
	&= \frac{r^t}{\binom{r}{t}}\sum_{S\in\binom{[r]}{t}}\prod_{s\in S}\abs{A_s}
\end{align*}
To bound this sum, we relax the condition that the parts have integral sizes. By symmetry, the real-valued polynomial function
\[ f(x_1,\ldots,x_r) = \sum_{S \in \binom{[r]}{t}} \prod_{i \in S} x_{i}\]
has a unique maximum when each $x_i = n/r$, in which case each of the $\binom{r}{t}$ terms is $(n/r)^t$, and thus
\[ w(G) \le \frac{r^t}{\binom{r}{t}} \cdot \binom{r}{t} \cdot \left(\frac{n}{r}\right)^t = n^t \]
with equality if and only if $|A_i| = n/r$.

Thus we may assume $G$ is not complete multipartite. 
We use a technique introduced by Zykov~\cite{aZ49} sometimes called Zykov symmetrization. Suppose there are $x, y, z \in V(G)$ such that $x \sim z$ but $x \not\sim y \not\sim z$. We show that as long as such vertices exist, we can find $G'$ on $n$ vertices such that $w(G') > w(G)$. If no such vertices exist, then $x \not\sim y$ and $y \not\sim z$ implies $x \not\sim z$, which is to say nonadjacent vertices can be partitioned into equivalence classes and therefore $G$ is complete multipartite. Thus producing such a $G'$ reduces this case to the complete multipartite case, and furthermore proves any graph that meets the bound in the theorem must be complete multipartite.

For convenience, for $v \in V(G)$, define
\[ w_G(v) = \sum_{\substack{T \in \K(G) \\ \text{s.t.} \ v \in T}} w_G(T). \]
Assume without loss of generality that $w_G(x) \ge w_G(z)$ and consider two cases.

\begin{case}$w_G(x) > w_G(y)$.
\end{case}

Introduce a new vertex $x'$, add edges so that $N(x') = N(x)$, and let $G' = G - y + x'$. Consider a clique $K'$ (of any size) that is present in $G'$ but not in $G$. Such a clique must contain $x'$ and so must be contained in $N_{G'}[x'] \cong N_G[x]$. Therefore any $T \in \K(G)$ contained in $K'$ is also contained in some clique $K \subseteq V(G)$ of the same size. Thus every $T \in \K(G) \cap \K(G')$ has $\alpha_G(T) \ge \alpha_{G'}(T)$. As $w_G(T)$ is decreasing in $\alpha_G(T)$, we have $w_G(T) \le w_{G'}(T)$. Then
	\begin{align*}
		\sum_{T \in \K(G')}w_{G'}(T) &= \sum_{T \in \K(G')\setminus\K(G)}w_{G'}(T) + \sum_{T \in \K(G')\cap\K(G)}w_{G'}(T)\\
		&\ge \sum_{T \in \K(G')\setminus\K(G)}w_{G'}(T) + \sum_{T \in \K(G-y)}w_{G}(T)\\
		&= w_{G'}(x') + \parens[\Big]{\sum_{T \in \K(G)}w_G(T) - w_G(y)}\\
		&= w_{G}(x) + \sum_{T \in \K(G)}w_G(T) - w_G(y)\\
		&> \sum_{T \in \K(G)}w_G(T).
	\end{align*}	

\begin{case}$w_G(x) \le w_G(y)$. \end{case}

In this case we introduce two new vertices, $y'$ and $y''$, add edges such that $N(y'') = N(y') = N(y)$, and define $G'' = G - x - z + y' + y''$. As before, every clique $K'$ that is in $G''$ but not in $G$ must contain $y'$ or $y''$ (but not both as $y' \not\sim y''$). Thus once again $K'$ must be contained in $N_{G''}[y'] \cong N_G[y]$ or $N_{G''}[y''] \cong N_G[y]$, so every $T \in \K(G)\cap \K(G'')$ has $\alpha_G(T) \ge \alpha_{G''}(T)$ and $w_G(T) \le w_{G''}(T)$. Therefore
	\begin{align*}
		\sum_{T \in \K(G'')}w_{G''}(T) &= \sum_{T \in \K(G'')\setminus\K(G)}w_{G''}(T) + \sum_{T \in \K(G'')\cap\K(G)}w_{G''}(T)\\
		&\ge \sum_{T \in \K(G'')\setminus\K(G)}w_{G''}(T) +\sum_{T \in \K(G-x-z)}w_{G}(T)\\
		&= w_{G''}(y') + w_{G''}(y'') + \parens[\Big]{\sum_{T \in \K(G)}w_G(T) - w_G(x) - w_G(z) + \sum_{\substack{T \in \K(G) \\ \text{s.t.} \ x,z \in T}}w_G(T)}\\
        &= 2w_G(y) + \sum_{T \in \K(G)}w_G(T) - w_G(x) - w_G(z) + \sum_{\substack{T \in \K(G) \\ \text{s.t.} \ x,z \in T}}w_G(T)\\
		&\ge \sum_{T \in \K(G)}w_G(T) + \sum_{\substack{T \in \K(G) \\ \text{s.t.} \ x,z \in T}}w_G(T)\\
		&> \sum_{T \in \K(G)}w_G(T),
	\end{align*}
where the last step holds because $x \sim z$, and our initial assumption guarantees every edge is contained in a $t$-clique.
\end{proof}

One can also view Theorem~\ref{thm:local_zykov} as a generalization of a theorem of Zykov~\cite{aZ49} (and independently Erd\H{o}s~\cite{pE62}). In what is now considered the first generalized Tur\'an result, Zykov proved that, among $K_{r+1}$-free graphs, the Tur\'an graph $T_r(n)$ maximizes not only the number of edges but the number of cliques of any size.

\begin{thm}[Zykov~\cite{aZ49}]\label{thm:zykov}
Let $G$ be a $K_{r+1}$-free graph on $n$ vertices. Then for any $t \ge 1$,
\[ \scount{K_t}{G} \le \binom{r}{t}\left(\frac{n}{r}\right)^t. \]
\end{thm}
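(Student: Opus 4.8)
The plan is to deduce \cref{thm:zykov} directly from \cref{thm:local_zykov} by exploiting that the weight function $w_G$ is decreasing in the size function $\alpha_G$, together with the hypothesis that $G$ is $K_{r+1}$-free. First I would dispose of the degenerate cases. If $t = 1$, then $\scount{K_1}{G} = n = \binom{r}{1}(n/r)^1$, so the inequality holds with equality and nothing is to be shown. If $t \ge 2$ but $r < t$, then $K_{r+1} \subseteq K_t$, so a $K_{r+1}$-free graph is $K_t$-free; hence $\scount{K_t}{G} = 0 = \binom{r}{t}(n/r)^t$ since $\binom{r}{t} = 0$ in this range. Likewise, if $\K(G) = \emptyset$ the left-hand side is $0$ and there is nothing to prove.

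For the main case $t \ge 2$ and $r \ge t$, I would argue as follows. For every $T \in \K(G)$, the definition of $\alpha_G(T)$ gives a clique $S \subseteq G$ of order $\alpha_G(T)$; since $G$ is $K_{r+1}$-free, this forces $\alpha_G(T) \le r$. Because \cref{thm:local_zykov} guarantees that $w_G(T)$ is a decreasing function of $\alpha_G(T)$, we obtain
\[ w_G(T) \ge \frac{r^{t}}{\binom{r}{t}} \qquad \text{for every } T \in \K(G). \]
Summing over all $T \in \K(G)$ and invoking the upper bound $w(G) \le n^t$ from \cref{thm:local_zykov},
\[ \scount{K_t}{G}\cdot\frac{r^{t}}{\binom{r}{t}} \;\le\; \sum_{T \in \K(G)} w_G(T) \;\le\; n^{t}, \]
and rearranging yields $\scount{K_t}{G} \le \binom{r}{t}(n/r)^{t}$, as desired.

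I do not expect a genuine obstacle here, as the statement is a clean corollary of the localized theorem; the only points requiring care are the bookkeeping in the boundary cases ($t=1$ and $r<t$), and making explicit that $K_{r+1}$-freeness is precisely the global bound $\alpha_G \le r$ on the size function needed to convert \cref{thm:local_zykov} into the non-localized statement. One could optionally remark that equality in \cref{thm:zykov} is inherited from the equality characterization in \cref{thm:local_zykov}: it forces $G$ to be a balanced complete multipartite graph with at least $t$ parts all of whose cliques have order exactly $r$, i.e.\ $G = T_r(n)$ (when $r \mid n$), recovering Zykov's extremal characterization.
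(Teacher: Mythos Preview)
Your proposal is correct and follows essentially the same approach as the paper's proof: bound $\alpha_G(T)\le r$ using $K_{r+1}$-freeness, use monotonicity of $w_G$ to get a uniform lower bound $w_G(T)\ge r^t/\binom{r}{t}$, and combine with the inequality $\sum w_G(T)\le n^t$ from \cref{thm:local_zykov}. Your explicit treatment of the degenerate cases $t=1$ and $r<t$ (where \cref{thm:local_zykov}, stated for $t\ge 2$, does not directly apply or the constant $r^t/\binom{r}{t}$ is undefined) is a small but welcome addition over the paper's version.
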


Note that Zykov actually proved the stronger result that $\ex(n, K_t, K_{r+1}) = \scount{K_t}{T_r(n)}$, though these results agree when $r \divides n$. 

We can prove Theorem~\ref{thm:zykov} as a consequence of Theorem~\ref{thm:local_zykov}:

\begin{proof}[Proof of \cref{thm:zykov}]
Let $G$ be a an $n$-vertex, $K_{r+1}$-free graph. Then for each $T \in \K(G)$ we have $\alpha_G(T) \le r$ and, as $w_G(T)$ is decreasing in $\alpha_G(T)$, $w_G(T) \ge r^t/\binom{r}{t}$. Thus by Theorem~\ref{thm:local_zykov},
\[ \scount{K_t}{G} \cdot \frac{r^t}{\binom{r}{t}} = \sum_{T \in \K(G)} \frac{r^t}{\binom{r}{t}} \le \sum_{T \in \K(G)} w_G(T) \le n^t \]
and so
\[ \scount{K_t}{G} \le \frac{\binom{r}{t}}{r^t} \cdot n^t = \binom{r}{t} \left(\frac{n}{r}\right)^t. \qedhere\]
\end{proof}

\section{Weighting by Maximum Path Length} \label{sec:paths}

In 1959, Erd\H{o}s and Gallai~\cite{EG59} proved that every graph with $n$ vertices and $m$ edges contains a path of length at least $2m/n$, and as a consequence $\ex(n, K_2, P_{r+1}) \le \frac{(r-1)n}{2}$, which, when $r \divides n$, is achieved by a disjoint union of copies of $K_r$. Luo~\cite{rL18} determined $\ex(n,K_t,P_{r+1})$ asymptotically, and Chakraborti and Chen~\cite{CC20} then determined $\mex(m,K_t,P_{r+1})$. Malec and Tompkins~\cite{MT22} gave a local version of the Erd\H{o}s-Gallai result. To extend their result to the $t$-clique versions, considering both graphs of fixed order and graphs of fixed size, we define a more general size function for paths.
\begin{defn}
Let $G$ be a graph on $n$ vertices and $t \ge 2$. For each $T \in \K(G)$, define
\[ \beta_G(T) = \max\set{k : T \subseteq V(S) \textnormal{ for some } S \subseteq G \st S \cong P_{k+1}} . \]
\end{defn}

\subsection{Paths in graphs of fixed order}\label{subsec:pathorder}

In this section we extend a result of Malec and Tompkins~\cite[Theorem 2]{MT22} to cliques, which we will demonstrate also generalizes a result of Luo~\cite{rL18}.

\begin{thm}\label{thm:local_luo_path}
Let $G$ be a graph on $n$ vertices and $t \ge 2$. Define
\[ p_G(T) = \frac{1}{\binom{\beta_G(T)}{t-1}}. \]
Then $p_G(T)$ is well-defined and decreasing in $\beta_G(T)$, and
\[ \sum_{T \in \K(G)}p_G(T) \le \frac{n}{t}, \]
with equality if and only if $G$ is a disjoint union of complete graphs of order at least $t$.
\end{thm}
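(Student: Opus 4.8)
The plan is to mirror the structure of the proof of \cref{thm:local_zykov}, but with a decomposition step replacing Zykov symmetrization. First I would check the easy preliminaries: $\beta_G(T) \ge t-1$ whenever $T \in \K(G)$, since the $t$ vertices of $T$ lie on a path with at least $t$ vertices (any ordering of the clique), so $p_G(T) = 1/\binom{\beta_G(T)}{t-1}$ is well-defined; and it is decreasing in $\beta_G(T)$ because $\binom{x}{t-1}$ is strictly increasing for $x \ge t-2$ (as recalled before \cref{obs:falling_fact}). As in the Zykov proof, I would first observe that edges (indeed $t$-cliques) not contained in any longer path can be harmlessly handled: if an edge lies in no copy of $H=K_t$ that is "extendable," this requires care — actually the cleaner reduction is the standard one for Erd\H{o}s--Gallai-type arguments, namely to reduce to the case where $G$ is connected, by summing over connected components (the bound is additive in $n$ and the extremal structure is a disjoint union, so it suffices to prove the connected case with the claim that equality forces $G$ to be a single clique of order $\ge t$).

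For a connected graph $G$ on $n$ vertices, let $P$ be a longest path in $G$, say with vertex set $v_0, v_1, \ldots, v_\ell$ (so $\ell = \beta_G(\{\,\})$-type length, and $P \cong P_{\ell+1}$). The key structural input — this is the analogue of the Erd\H{o}s--Gallai extremal analysis used by Luo~\cite{rL18} and by Malec--Tompkins — is that every $t$-clique $T$ of $G$ is "captured" by an endpoint of $P$ in the following sense: one can show $\beta_G(T) \ge |V(C)|-1$ is too weak, so instead I would use the rotation/endpoint argument. Specifically, I would consider the standard trick of attaching a new pendant structure or, following Luo, partitioning $\K(G)$ according to which vertices of the longest path "see" each clique, and bound $\sum_{T} 1/\binom{\beta_G(T)}{t-1}$ by distributing weight $1/t$ to each vertex of $G$. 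The cleanest route: show that for each $T \in \K(G)$ with $\beta_G(T) = k$, the clique $T$ together with the path realizing $\beta_G(T)$ lives inside a subgraph on $k+1$ vertices, and a counting/averaging argument (exactly as in Luo's proof that the $t$-cliques in a graph with no $P_{r+1}$ are limited) gives that the total $p_G$-weight charged to the $k+1$ vertices spanned by that path is at most $(k+1)/t \cdot$ (something), summing to $n/t$ overall. I expect the honest version to instead run an induction on $n$: delete an endpoint $v_0$ of a longest path; the $t$-cliques through $v_0$ all lie in $N(v_0)$, which (by longest-path maximality, $N(v_0) \subseteq \{v_1,\dots,v_\ell\}$) is contained in a set spanning a path of length $\le \ell$, and one bounds $\sum_{T \ni v_0} p_G(T)$ using \cref{thm:LKKgraph} (Lov\'asz's form of Kruskal--Katona) applied to $G[N(v_0)]$ to control $\scount{K_{t-1}}{N(v_0)}$ against $\beta_G(T)$, showing this sum is $\le 1/t$; then apply induction to $G - v_0$.

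The main obstacle is precisely this local estimate $\sum_{T \in \K(G),\, v_0 \in T} p_G(T) \le 1/t$ for $v_0$ an endpoint of a longest path: I need to relate the number of $(t-1)$-cliques in the neighborhood of $v_0$ to the path-lengths $\beta_G(T)$, and \cref{thm:LKKgraph} is the right tool — writing $|E(G[N(v_0)])| = \binom{x}{2}$ gives $\scount{K_{t-1}}{N(v_0)} \le \binom{x}{t-1}$ while \cref{lem:LKKgraph} gives that $N(v_0)$ spans at least $x$ vertices, which together with the longest-path structure forces $\beta_G(T) \ge$ something like $x$ for these cliques, so each contributes $\le 1/\binom{x}{t-1}$ and the sum is $\le 1$; getting the extra factor $1/t$ requires, as in Luo and Malec--Tompkins, the observation that each such $t$-clique is actually "born" at $t$ different vertices and so is overcounted by a factor $t$ across the induction, or equivalently charging weight $p_G(T)/t$ to each of its $t$ vertices. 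The equality analysis then comes for free: equality propagates through the induction only if at every stage $N(v_0)$ induces a clique and $G - v_0$ is extremal, forcing each component to be a complete graph of order $\ge t$.
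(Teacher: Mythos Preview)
Your inductive skeleton (reduce to connected $G$, take an endpoint $v_0$ of a longest path $P$ of length $r$, bound the contribution of the $t$-cliques through $v_0$, and apply induction to $G-v_0$) matches the paper's structure, but the heart of the argument --- the local estimate $\sum_{T \ni v_0} p_G(T) \le 1/t$ --- is not established by either of the two mechanisms you propose.

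First, the Kruskal--Katona route does not produce the needed gain. Since $N(v_0) \subseteq V(P)$, every $t$-clique $T$ through $v_0$ has $\beta_G(T)=r$, so $p_G(T)=1/\binom{r}{t-1}$; the number of such $T$ is at most $\binom{d(v_0)}{t-1}$, and all you know from $N(v_0)\subseteq V(P)$ is $d(v_0)\le r$. Invoking \cref{thm:LKKgraph} on $G[N(v_0)]$ only yields $\binom{x}{t-1}$ with $x\le d(v_0)\le r$, so you still get $\sum_{T\ni v_0}p_G(T)\le 1$, not $\le 1/t$. Second, the ``overcounting by $t$'' fix is incorrect: in the vertex-deletion induction a clique through $v_0$ disappears once $v_0$ is removed and is never counted again, and the alternative global charging (give $p_G(T)/t$ to each vertex of $T$ and hope every vertex receives charge $\le 1/t$) fails already for $t=2$ at the center of a star $K_{1,n-1}$, where the center receives $(n-1)/2$.

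What the paper actually uses is a case split you omit. If some cycle $C$ contains $V(P)$, connectivity forces $V(G)=V(C)$; then every $T$ has $\beta_G(T)=n-1$ and the trivial bound $\scount{K_t}{G}\le\binom{n}{t}$ gives $\sum p_G(T)\le n/t$ directly (no Kruskal--Katona). If no such cycle exists, the P\'osa rotation argument (which you allude to but then drop) gives $d(v_0)+d(w)\le r$ for the two endpoints, so one may take $d(v_0)\le r/2$; then
\[
\sum_{T\ni v_0}p_G(T)\;\le\;\frac{\binom{r/2}{t-1}}{\binom{r}{t-1}}\;\le\;\frac{1}{2^{t-1}}\;\le\;\frac{1}{t}
\]
by \cref{obs:falling_fact}, which is exactly the missing factor. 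So the inductive step needs the degree bound $d(v_0)\le r/2$ from the rotation lemma, not Kruskal--Katona, and the Hamiltonian-type case must be handled separately.
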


We note that setting $t=2$ does not quite recover \cite[Theorem 2]{MT22} as our weight function is not equivalent at $t=2$. Malec and Tompkins define $\beta_{MT}(e)$ to be the longest path containing the edge $e$ as a subgraph. This definition does not extend to larger cliques as paths do not contain them as subgraphs. Instead, we merely require that all vertices of the clique appear in the path. Thus when $t=2$, unlike for Malec and Tompkins, the vertices of our edge may occur in the path without the edge being part of the path.

\begin{proof}
First, we note any ordering of the vertices in any $T \in \K(G)$ is a path of length $t-1$, so $\beta_G(T) \ge t-1$ and $\binom{\beta_G(T)}{t-1} > 0$. Therefore $p_G(T)$ is well-defined and decreasing in $\beta_G(T)$.

We proceed by induction on $n$. When $1 \le n \le t-1$ we have
\[ \sum_{T \in \K(G)}p_G(T) = 0 < \frac{n}{t}. \]
	
If $G$ is not connected, let $C_1, \ldots, C_q$ be the components of $G$. Applying the inductive hypothesis to each component, we have
	\begin{align*}
		 \sum_{T \in \K(G)}p_G(T) &= \sum_{i=1}^q \sum_{T \in \K(C_i)} p_G(T)\\
   &= \sum_{i=1}^q \sum_{T \in \K(C_i)} p_{C_i}(T)\\
		 &\le \sum_{i=1}^q \frac{\abs{V(C_i)}}{t}\quad\text{by induction}\\
		 &= \frac{n}{t}.
	\end{align*}
Therefore we may assume $G$ is connected and $n \ge t$. Let $r$ be the length of a longest path $P \cong P_{r+1}$ in $G$.
	
	\begin{case}\label{case:cycle}
		There exists a cycle $C$ containing the vertices of $P$.
	\end{case}

	Suppose for the sake of contradiction that there is a vertex $u$ that is not on the cycle $C$. Since $G$ is connected, there is a path from $u$ to $C$ and then around $C$, which is longer than $P$, contradicting that $P$ is a longest path. Therefore every vertex of $G$ is on $C$. Each $T \in \K(G)$ is contained in a path of length $n-1$ and has $p_G(T) = 1/\binom{n-1}{t-1}$.
	
	Hence,
	\[
		\sum_{T \in \K(G)}p_G(T) = \sum_{T \in \K(G)}\frac{1}{\binom{n-1}{t-1}} = \frac{\scount{K_t}{G}}{\binom{n-1}{t-1}} \le \frac{\binom{n}{t}}{\binom{n-1}{t-1}} = \frac{n}{t},
	\]
	and equality implies $G \cong K_n$, with $n \ge t$.
 
	\begin{case}\label{case:nocycle}
		There does not exist a cycle $C$ containing the vertices of $P$.
	\end{case}
	
 	Let $v$ and $w$ be the endpoints of $P$. Then $\set{v,w} \notin E(G)$.	Label the vertices of $P$ in order as $(v=u_1, u_2, \ldots, u_{r+1}=w)$. Let $V = \set{i : v \sim u_i}$ and $W = \set{i: w \sim u_{i-1}}$. If $i \in V\cap W$ then $(u_i, u_1, u_2, \ldots, u_{i-1}, u_{r+1}, u_r, \ldots, u_i)$ is a cycle containing the vertices of $P$, so $V\cap W = \emptyset$. Since $V, W \subseteq \set{2,\ldots, r+1}$, we have  $d(v) + d(w) = \abs{V}+\abs{W}=\abs{V\cup W} \le r$. Assume, without loss of generality, that $d(v) \le r/2$.

    Let $R(v)$ be the set of $t$-cliques of $G$ that contain $v$. Then $\abs{R(v)} \le \binom{d(v)}{t-1}$. The fact that $P$ is a longest path implies both that $\beta_G(T) \le r$ and that every $t$-clique $T$ in $R(v)$ is contained in $V(P)$ (so $\beta_G(T) \ge r$), and therefore $p_G(T) = 1/\binom{r}{t-1}$ for every $T \in R(v)$. For any $T \in \K(G-v)$, we see $\beta_{G-v}(T) \le \beta_G(T)$ and therefore, as $p_G$ is decreasing in $\beta_G(T)$, $p_{G-v}(T) \ge p_G(T)$.
    
  Applying the inductive hypothesis to $G-v$, we get
 	
 	\begin{align*}
 		\sum_{T \in \K(G)}p_G(T) &= \sum_{T \in \K(G-v)}p_G(T) + \sum_{T \in R(v)}p_G(T)\\
            &\le \sum_{T \in \K(G-v)}p_{G-v}(T) + \sum_{T \in R(v)}p_G(T)\\
 		&\le \frac{n-1}{t} + \frac{\abs{R(v)}}{\binom{r}{t-1}}\\
 		&\le \frac{n-1}{t} + \frac{\binom{d(v)}{t-1}}{\binom{r}{t-1}}\\
 		&\le \frac{n-1}{t} + \frac{\binom{r/2}{t-1}}{\binom{r}{t-1}}\\
            &\le \frac{n-1}{t} +\frac{(1/2)^{t-1} \binom{r}{t-1}}{\binom{r}{t-1}} & \text{by \cref{obs:falling_fact}}\\
 		&= \frac{n-1}{t} + \frac{1}{2^{t-1}}\\
 		&\le \frac{n-1}{t} + \frac{1}{t}\\
 		&= \frac{n}{t}
 	\end{align*}
 	as $2^{t-1} \ge t$ when $t \ge 2$. 
When $t \ge 3$, we see $2^{t-1} > t$ and thus if equality holds, no component is in Case \ref{case:nocycle}, so every component is in Case \ref{case:cycle}, and every component is a clique.

When $t=2$, the claim that equality holds only for cliques follows from~\cite[Theorem 2]{MT22}. Though, as noted, our definition of $\beta_G(T)$ does not quite match their $\beta_{MT}(e)$, we have $\beta_{MT}(e) \le \beta_G(e)$ as any path containing an edge also contains the vertices of that edge. Because
\[ p_G(T) = \frac{1}{\binom{\beta_G(T)}{2-1}} = \frac{1}{\beta_G(T)} \]
is decreasing and matches Malec and Tompkins' weight, we see
\[ \sum_{T \in \K[2](G)} p_G(T) = \sum_{e \in E(G)} \frac{1}{\beta_G(e)} \le \sum_{e \in E(G)} \frac{1}{\beta_{MT}(e)}. \]
Therefore if equality holds in our result, it also holds in Malec and Tompkins' result, and by Theorem~2 in \cite{MT22}, $G$ is a disjoint union of complete graphs.
\end{proof}

In 2018, Luo~\cite{rL18} extended Erd\H{o}s and Gallai's theorem by showing disjoint unions of cliques maximize the number of cliques of any size.

\begin{thm}[Luo~\cite{rL18}]\label{thm:luo_path}
Let $G$ be a $P_{r+1}$-free graph on $n$ vertices. Then for any $t \le r$,
\[ \scount{K_t}{G} \le \frac{n}{r} \binom{r}{t}. \]
Furthermore, equality holds if and only if $r \divides n$ and $G$ is a disjoint union of copies of $K_r$.
\end{thm}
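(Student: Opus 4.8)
The plan is to derive \cref{thm:luo_path} from \cref{thm:local_luo_path} exactly as \cref{thm:zykov} was derived from \cref{thm:local_zykov}: turn the global hypothesis on $G$ into a pointwise lower bound on the weight $p_G(T)$, then read off the count. Let $G$ be a $P_{r+1}$-free graph on $n$ vertices with $t \le r$. Since $G$ contains no path on $r+1$ vertices, every subgraph of $G$ isomorphic to some $P_{k+1}$ has $k+1 \le r$, so $\beta_G(T) \le r-1$ for every $T \in \K(G)$. Because $p_G(T) = 1/\binom{\beta_G(T)}{t-1}$ is decreasing in $\beta_G(T)$ (as established in the proof of \cref{thm:local_luo_path}), this gives $p_G(T) \ge 1/\binom{r-1}{t-1}$ for every $T \in \K(G)$. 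Summing over $\K(G)$ and invoking \cref{thm:local_luo_path},
\[ \frac{\scount{K_t}{G}}{\binom{r-1}{t-1}} \;\le\; \sum_{T \in \K(G)} p_G(T) \;\le\; \frac{n}{t}, \]
hence $\scount{K_t}{G} \le \tfrac{n}{t}\binom{r-1}{t-1}$. A one-line factorial computation shows $\tfrac{n}{t}\binom{r-1}{t-1} = \tfrac{n}{r}\binom{r}{t}$, which is the stated bound.

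For the equality characterization I would track where the two displayed inequalities can be tight. Equality in the first forces $p_G(T) = 1/\binom{r-1}{t-1}$, i.e.\ $\beta_G(T) = r-1$, for every $T \in \K(G)$; here I would use that the generalized binomial coefficient $\binom{x}{t-1}$ is strictly increasing for $x \ge t-2$, together with $t-1 \le \beta_G(T) \le r-1$ and $r-1 \ge t-1$. Equality in the second, by \cref{thm:local_luo_path}, means $G$ is a disjoint union of complete graphs $K_{n_1},\dots,K_{n_q}$ with each $n_i \ge t$ (and, when $n \ge 1$, $\K(G) \ne \emptyset$ since the bound is positive).

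It then remains to combine these two conditions. For a $t$-clique $T$ lying in a component $K_{n_i}$, the longest path of $G$ passing through the vertices of $T$ is a Hamiltonian path of $K_{n_i}$, so $\beta_G(T) = n_i - 1$; since $n_i \ge t$ each component contains such a $T$, and $\beta_G(T) = r-1$ forces $n_i = r$. Thus $G$ is a disjoint union of copies of $K_r$, so $r \divides n$. The converse is a direct check: if $r \divides n$ and $G$ is the disjoint union of $n/r$ copies of $K_r$, then $G$ is $P_{r+1}$-free, every $T \in \K(G)$ has $\beta_G(T) = r-1$, and $G$ attains equality in \cref{thm:local_luo_path}, whence $\scount{K_t}{G} = \binom{r-1}{t-1}\cdot \tfrac{n}{t} = \tfrac{n}{r}\binom{r}{t}$. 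The only step needing genuine care is this last translation of the local equality condition into a structural statement — in particular, the observation that $\beta_G$ restricted to a clique component equals one less than that component's order; everything else is arithmetic and monotonicity already recorded in the excerpt.
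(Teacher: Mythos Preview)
Your proposal is correct and follows essentially the same route as the paper: bound $\beta_G(T)\le r-1$, use monotonicity of $p_G$ to get the pointwise lower bound $p_G(T)\ge 1/\binom{r-1}{t-1}$, sum and apply \cref{thm:local_luo_path}, then read off the equality case from the two tightness conditions. Your equality analysis is in fact more explicit than the paper's (which simply notes that each component is a $P_{r+1}$-free clique in which every $t$-clique lies on a $P_r$), but the content is the same.
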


We note that Theorem~\ref{thm:local_luo_path} generalizes Theorem~\ref{thm:luo_path}:

\begin{proof}
	Let $G$ be a $P_{r+1}$-free graph on $n$ vertices. In such a graph $G$, every $T \in \K(G)$ has $\beta_G(T) \le r-1$, so $p_G(T) \ge \frac{1}{\binom{r-1}{t-1}}$. By Theorem~\ref{thm:local_luo_path},
	\[
		\frac{\scount{K_t}{G}}{\binom{r-1}{t-1}} = \sum_{T \in \K(G)} \frac{1}{\binom{r-1}{t-1}} \le \sum_{T \in \K(G)}p_G(T) \le \frac{n}{t},
	\]
	so
	\[
	\scount{K_t}{G} \le \frac{n}{t}\binom{r-1}{t-1} = \frac{n}{r}\binom{r}{t}.
	\]
	By \cref{thm:local_luo_path}, equality implies that every component is a $P_{r+1}$-free clique, and that every $t$-clique is contained in a $P_r$, so $G$ is a disjoint union of copies of $K_r$.
\end{proof}

\subsection{Paths in graphs of fixed size}\label{subsec:pathsize}
Chakraborti and Chen \cite{CC20} asked and answered the edge variant of this question by determining $\mex(m,K_t,P_{r+1})$ exactly for all values of the parameters $m$, $t$, and $r$. We prove a localized form of their result, then show that it implies an asymptotic determination of $\mex(m,K_t,P_{r+1})$. 

\begin{thm}\label{thm:local_mex_path}
Let $t \ge 3$. Let $G$ be a graph having $m$ edges. For each $T \in \K(G)$, define
\[ p'_G(T) = \frac{1}{\binom{\beta_G(T)-1}{t-2}}. \]
Then $p'_G(T)$ is well-defined and decreasing in $\beta_G(T)$, and
\[ \sum_{T \in \K(G)}p'_G(T) \le \frac{m}{\binom{t}{2}}, \]
with equality if and only if $G$ is a disjoint union of complete graphs of order at least $t$ and any number of isolated vertices. 
\end{thm}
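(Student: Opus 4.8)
The plan is to mirror the structure of the proof of \cref{thm:local_luo_path}, but induct on the number of edges $m$ rather than on $n$, since the target bound is now in terms of $m$. First I would check that $p'_G(T)$ is well-defined and decreasing: any ordering of the $t$ vertices of a clique $T$ is a path on $t$ vertices, so $\beta_G(T) \ge t-1$, hence $\beta_G(T)-1 \ge t-2$ and $\binom{\beta_G(T)-1}{t-2} \ge 1 > 0$; monotonicity is immediate since $\binom{x}{t-2}$ is weakly increasing. Then, for the base case, if $m < \binom{t}{2}$ then $G$ has no $t$-clique (a $K_t$ needs $\binom{t}{2}$ edges), so the left side is $0$. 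For the inductive step, if $G$ is disconnected I would split off components and add the per-component bounds, exactly as in \cref{thm:local_luo_path}; so assume $G$ connected with a longest path $P \cong P_{r+1}$.

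Now I would split into the same two cases. In \cref{case:cycle} (there is a cycle $C$ through all of $V(P)$), connectivity and maximality of $P$ force $V(G) = V(C)$, so $G$ is a graph on $n := r+1$ vertices in which every $t$-clique lies on a Hamilton path, giving $\beta_G(T) = n-1$ and $p'_G(T) = 1/\binom{n-2}{t-2}$ for all $T$. Writing $m = \binom{x}{2}$ with $x \ge 2$ real, \cref{obs:falling_fact}... actually I would use \cref{thm:LKKgraph}: $\scount{K_t}{G} \le \binom{x}{t}$, and by \cref{obs:falling_fact} applied with the graph $G$, $\binom{x}{2} = m \ge \binom{n}{2}$ is false in general — rather $|V(G)| = n \ge x$ by \cref{lem:LKKgraph}. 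So
\[
\sum_{T \in \K(G)} p'_G(T) = \frac{\scount{K_t}{G}}{\binom{n-2}{t-2}} \le \frac{\binom{x}{t}}{\binom{n-2}{t-2}} \le \frac{\binom{x}{t}}{\binom{x-2}{t-2}} = \frac{x(x-1)}{t(t-1)} = \frac{m}{\binom{t}{2}},
\]
where the middle inequality uses $n-2 \ge x-2$ and monotonicity of $\binom{y}{t-2}$, and the last equality is the identity $\binom{x}{t}/\binom{x-2}{t-2} = \binom{x}{2}/\binom{t}{2}$ on falling factorials. Equality forces $x = n$, i.e.\ $m = \binom{n}{2}$, so $G \cong K_n$ with $n \ge t$.

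For \cref{case:nocycle}, I would follow the Erd\H{o}s--Gallai endpoint argument verbatim: with endpoints $v,w$ of $P$, the sets $V = \{i : v \sim u_i\}$ and $W = \{i : w \sim u_{i-1}\}$ are disjoint subsets of $\{2,\dots,r+1\}$, so $d(v)+d(w) \le r$; assume $d(v) \le r/2$. Every $t$-clique through $v$ lies in $V(P)$ and has $\beta_G(T) = r$, so each contributes $1/\binom{r-1}{t-2}$, and there are at most $\binom{d(v)}{t-1}$ of them. Deleting $v$ removes $d(v)$ edges, and $\beta_{G-v}(T) \le \beta_G(T)$ so $p'_{G-v}(T) \ge p'_G(T)$. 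Applying induction to $G-v$:
\[
\sum_{T \in \K(G)} p'_G(T) \le \frac{m - d(v)}{\binom{t}{2}} + \frac{\binom{d(v)}{t-1}}{\binom{r-1}{t-2}}.
\]
The main obstacle — and the only genuinely new calculation — is showing the right side is at most $m/\binom{t}{2}$, i.e.\ $\binom{d(v)}{t-1}/\binom{r-1}{t-2} \le d(v)/\binom{t}{2}$, equivalently $\binom{t}{2}\binom{d(v)}{t-1} \le d(v)\binom{r-1}{t-2}$. Using $\binom{d(v)}{t-1} = \frac{d(v)}{t-1}\binom{d(v)-1}{t-2}$, this reduces to $\frac{t}{2}\binom{d(v)-1}{t-2} \le \binom{r-1}{t-2}$, and since $d(v) \le r/2$ we have $d(v)-1 \le (r-2)/2 = \frac{1}{2}((r-1)-1)$... so by \cref{obs:falling_fact} (with $2x+1 = r-1$, $n = t-2 \ge 1$), $\binom{d(v)-1}{t-2} \le \binom{(r-2)/2}{t-2} \le 2^{-(t-2)}\binom{r-2}{t-2} \le 2^{-(t-2)}\binom{r-1}{t-2}$, so it suffices that $\frac{t}{2} \le 2^{t-2}$, i.e.\ $t \le 2^{t-1}$, true for $t \ge 3$ (with strict inequality for $t \ge 3$ except $t=3$ where $3 < 4$ is already strict). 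Strictness then gives: at equality no component falls in \cref{case:nocycle}, every component is a clique of order $\ge t$ (or an isolated vertex, which contributes $0$ edges and $0$ to the sum), and each $t$-clique lies on a Hamilton path of its component — which is automatic in a complete graph. I would double-check the $t=3$ boundary and the edge-case $d(v) \le t-2$ (where $\binom{d(v)}{t-1}=0$ and the bound is trivial) separately, but I expect no surprises there.
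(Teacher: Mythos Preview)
Your proposal is correct and follows essentially the same route as the paper's proof: induction on $m$, reduction to connected $G$, the same two-case split on whether $V(P)$ spans a cycle, the use of \cref{lem:LKKgraph} and \cref{thm:LKKgraph} in the Hamiltonian case, and the Erd\H{o}s--Gallai endpoint argument together with \cref{obs:falling_fact} in the non-Hamiltonian case. The only cosmetic difference is in how the Case~2 estimate is arranged: the paper keeps the factor $\frac{r-t+1}{r-1} = \binom{r-2}{t-2}/\binom{r-1}{t-2}$ explicit and combines it with $2^{t-1} > t$, whereas you absorb that factor via $\binom{r-2}{t-2} \le \binom{r-1}{t-2}$ and then use $t < 2^{t-1}$ alone for strictness---both yield the same strict inequality for $t \ge 3$.
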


\begin{proof}
As before, any ordering of the vertices in any $T \in \K(G)$ is a path of length $t-1$, so $\beta_G(T) \ge t-1$ and $\binom{\beta_G(T)-1}{t-2} > 0$. Therefore $p'_G(T)$ is well-defined and decreasing in $\beta_G(T)$.

We proceed by induction on $m$. When $1 \le m \le \binom{t}{2}-1$ we have
\[ \sum_{T \in \K(G)}p'_G(T) = 0 < \frac{m}{\binom{t}{2}}. \]

If $G$ is not connected, let $C_1, \ldots, C_q$ be the components of $G$. Applying the inductive hypothesis to each component, we have
	\begin{align*}
		 \sum_{T \in \K(G)}p'_G(T) &= \sum_{i=1}^q \sum_{T \in \K(C_i)} p'_G(T)\\
        &= \sum_{i=1}^q \sum_{T \in \K(C_i)} p'_{C_i}(T)\\
		 &\le \sum_{i=1}^q \frac{\abs{E(C_i)}}{\binom{t}{2}}\quad\text{by induction}\\
		 &= \frac{m}{\binom{t}{2}}.
	\end{align*}

Therefore we may assume $G$ is connected and $m \ge \binom{t}{2}$. Let $r$ be the length of a longest path $P \cong P_{r+1}$ in $G$.

\begin{case}\label{case:cycle_edge}
		There exists a cycle $C$ containing the vertices of $P$.
	\end{case}

	Suppose for the sake of contradiction that there is a vertex $u$ that is not on the cycle $C$. Since $G$ is connected, there is a path from $u$ to $C$ and then around $C$, which is longer than $P$, contradicting that $P$ is a longest path. Therefore every vertex of $G$ is on $C$. Each $T \in \K(G)$ is contained in a path of length $\abs{V(G)}-1$, and has $p'_G(T) = 1/\binom{\abs{V(G)}-2}{t-2}$. Let $x \ge t$ be a real number satisfying $m = \binom{x}{2}$. Then we have $\abs{V(G)} \ge x$ by \cref{lem:LKKgraph} and, by \cref{thm:LKKgraph}, $\scount{K_t}{G} \le \binom{x}{t}$. Hence,
	\[
		\sum_{T \in \K(G)}p'_G(T) = \sum_{T \in \K(G)}\frac{1}{\binom{\abs{V(G)}-2}{t-2}} = \frac{\scount{K_t}{G}}{\binom{\abs{V(G)}-2}{t-2}} \le \frac{\binom{x}{t}}{\binom{x-2}{t-2}} = \frac{\binom{x}{2}}{\binom{t}{2}} = \frac{m}{\binom{t}{2}},
	\]
    and equality implies $\binom{\abs{V(G)}-2}{t-2} = \binom{x-2}{t-2}$ with $x-2 \ge (t-2)-1$, so $x = \abs{V(G)}$. Then $\scount{K_t}{G} = \binom{\abs{V(G)}}{t}$, so $G\cong K_x$ (and $x \ge t$).

 \begin{case}\label{case:nocycle_edge}
		There does not exist a cycle $C$ containing the vertices of $P$.
	\end{case}

 	Let $v$ and $w$ be the endpoints of $P$. Then $\set{v,w} \notin E(G)$. Label the vertices of $P$ in order as $(v=u_1, u_2, \ldots, u_{r+1}=w)$. Let $V = \set{i : v \sim u_i}$ and $W = \set{i: w \sim u_{i-1}}$. If $i \in V\cap W$ then $(u_i, u_1, u_2, \ldots, u_{i-1}, u_{r+1}, u_r, \ldots, u_i)$ is a cycle containing the vertices of $P$. Because $V, W \subseteq \set{2,\ldots, r+1}$, we have  $d(v) + d(w) = \abs{V}+\abs{W}=\abs{V\cup W} \le r$. Assume, without loss of generality, that $d(v) \le r/2$.
 	
 	Let $R(v)$ be the set of $t$-cliques of $G$ that contain $v$. Then $\abs{R(v)} \le \binom{d(v)}{t-1}$. Since $P$ is a longest path, $v$ and $w$ have no neighbors outside of $P$, and so every $t$-clique in $R(v)$ is contained in $V(P)$. Every $T \in R(v)$ has $p'_G(T) = 1/\binom{r-1}{t-2}$ because it is contained in $P$ (and there are no longer paths). As noted above, we have $\beta_{G-v}(T) \le \beta_G(T)$ for any $T \in \K(G-v)$ and as $p'_G$ is also decreasing in $\beta_G(T)$, $p'_{G-v}(T) \ge p'_G(T)$. Applying the inductive hypothesis to $G-v$, we get
 	
 	\begin{align*}
 		\sum_{T \in \K(G)}p'_G(T) &= \sum_{T \in \K(G-v)}p'_G(T) + \sum_{T \in R(v)}p'_G(T)\\
        &\le \sum_{T \in \K(G-v)}p'_{G-v}(T) + \sum_{T \in R(v)}p'_G(T)\\
		&\le \frac{m-d(v)}{\binom{t}{2}} + \frac{\abs{R(v)}}{\binom{r-1}{t-2}}\\
		&\le \frac{m-d(v)}{\binom{t}{2}} + \frac{\binom{d(v)}{t-1}}{\binom{r-1}{t-2}}\\
		&= \frac{m-d(v)}{\binom{t}{2}} + \frac{\binom{d(v)-1}{t-2}}{\binom{r-1}{t-2}} \cdot \frac{d(v)}{t-1}\\
 		&\le \frac{m-d(v)}{\binom{t}{2}} + \frac{\binom{r/2-1}{t-2}}{\binom{r-1}{t-2}} \cdot \frac{d(v)}{t-1}\\
    & \le \frac{m-d(v)}{\binom{t}{2}} +\frac{(1/2)^{t-2} \binom{r-2}{t-2}}{\binom{r-1}{t-2}}  \cdot \frac{d(v)}{t-1} & \text{by \cref{obs:falling_fact}}\\
 		&= \frac{m-d(v)}{\binom{t}{2}} + \frac{r-t+1}{r-1} \cdot \frac{1}{2^{t-1}} \cdot \frac{d(v)}{\frac{t-1}{2}}\\
  		&< \frac{m-d(v)}{\binom{t}{2}} + \frac{d(v)}{\binom{t}{2}}\\
  		&= \frac{m}{\binom{t}{2}}
  \end{align*}
 	as $2^{t-1} > t$ and $\frac{r-t+1}{r-1} < 1$ when $t \ge 3$. Thus, if equality holds, no component is in Case \ref{case:nocycle_edge}, so every component is in Case \ref{case:cycle_edge}, and every component is a clique.
\end{proof}

As mentioned, Chakraborti and Chen~\cite{CC20} determined $\mex(m,K_t,P_{r+1})$ exactly, from which one can derive the following weaker result:

\begin{thm}[Chakraborti and Chen~\cite{CC20}] \label{thm:cc_mex_paths}
For any $3 \le t \le r$, if $G$ is a $P_{r+1}$-free graph with $m$ edges, then
\[ \scount{K_t}{G} \le \frac{m}{\binom{r}{2}} \cdot \binom{r}{t}.\]
Furthermore, equality holds if and only if $\binom{r}{2}$ divides $m$, and $G$ is a disjoint union of copies of $K_r$.
\end{thm}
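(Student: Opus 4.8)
The plan is to deduce \cref{thm:cc_mex_paths} directly from the localized \cref{thm:local_mex_path}, in exactly the same way that \cref{thm:luo_path} was obtained from \cref{thm:local_luo_path} and Zykov's theorem from its localized form: a global forbidden-subgraph hypothesis gives a uniform \emph{lower} bound on the (decreasing) weight function, which combines with the localized upper bound on the sum of weights to yield the counting bound.

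Concretely, first I would note that a $P_{r+1}$-free graph $G$ has no path on $r+1$ vertices, so every $T\in\K(G)$ lies in a path on at most $r$ vertices, i.e. $\beta_G(T)\le r-1$. Since $\binom{\beta_G(T)-1}{t-2}>0$ (because $\beta_G(T)\ge t-1$, as established in the proof of \cref{thm:local_mex_path}) and $p'_G$ is decreasing in $\beta_G(T)$, this yields the uniform bound $p'_G(T)\ge 1/\binom{r-2}{t-2}$, where $t\le r$ guarantees $\binom{r-2}{t-2}>0$. Summing over $\K(G)$ and invoking \cref{thm:local_mex_path},
\[ \frac{\scount{K_t}{G}}{\binom{r-2}{t-2}} = \sum_{T\in\K(G)}\frac{1}{\binom{r-2}{t-2}} \le \sum_{T\in\K(G)}p'_G(T) \le \frac{m}{\binom{t}{2}}, \]
so $\scount{K_t}{G}\le \frac{m}{\binom{t}{2}}\binom{r-2}{t-2}$. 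The routine identity $\binom{r-2}{t-2}/\binom{t}{2}=\binom{r}{t}/\binom{r}{2}$ then rewrites this as the claimed bound $\scount{K_t}{G}\le \frac{m}{\binom{r}{2}}\binom{r}{t}$.

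For the equality characterization, I would argue that equality in the displayed chain forces two things simultaneously: equality in $\sum_T p'_G(T)\le m/\binom{t}{2}$, which by \cref{thm:local_mex_path} means $G$ is a disjoint union of complete graphs of order at least $t$ together with isolated vertices; and $\beta_G(T)=r-1$ for every $T\in\K(G)$. In a component $K_s$ with $s\ge t$, every $t$-clique lies in a Hamiltonian path $P_s$ and in no longer path, so $\beta_G(T)=s-1$, forcing $s=r$; hence every nontrivial component is $K_r$, which gives $\binom{r}{2}\mid m$ and $\scount{K_t}{G}=\frac{m}{\binom{r}{2}}\binom{r}{t}$, and conversely a disjoint union of copies of $K_r$ attains equality. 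The only real care needed — and the step I expect to be the main (minor) obstacle — is in this equality analysis: one must dispatch the degenerate sub-cases where $\scount{K_t}{G}=0$ (so the second equality condition is vacuous but the bound is strict unless $m=0$) and where isolated vertices are present, so as to land precisely on the statement of \cref{thm:cc_mex_paths}.
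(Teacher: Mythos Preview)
Your proposal is correct and follows essentially the same route as the paper: bound $\beta_G(T)\le r-1$ from the $P_{r+1}$-free hypothesis, use that $p'_G$ is decreasing to get the uniform lower bound $p'_G(T)\ge 1/\binom{r-2}{t-2}$, apply \cref{thm:local_mex_path}, and rewrite via the identity $\binom{r-2}{t-2}/\binom{t}{2}=\binom{r}{t}/\binom{r}{2}$. Your equality analysis is likewise the same as the paper's (disjoint union of cliques from the localized equality case, then $\beta_G(T)=r-1$ forces each nontrivial component to be $K_r$), with a bit more explicit attention to the degenerate sub-cases than the paper gives.
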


We use Theorem~\ref{thm:local_mex_path} to prove Theorem~\ref{thm:cc_mex_paths}:

\begin{proof}
Let $G$ be a $P_{r+1}$-free graph. Then for each $T \in \K(G)$, we have $\beta_G(T) \le r-1$ and as $p'_G(T)$ is decreasing in $\beta_G$,
\[ \scount{K_t}{G} \cdot \frac{1}{\binom{r-2}{t-2}} \le \sum_{T \in \K(G)} p'_G(T) \le \frac{m}{\binom{t}{2}}\]
and therefore
\[ \scount{K_t}{G} \le \frac{m}{\binom{t}{2}} \cdot \binom{r-2}{t-2} = \frac{m}{\binom{r}{2}} \cdot \binom{r}{t}.\]
Equality implies $G$ is a disjoint union of $P_{r+1}$-free complete graphs, and every $t$-clique is contained in a $P_r$, so $G$ is a disjoint union of copies of $K_r$.
\end{proof}

\section{Weighting by Maximum Star Size} \label{sec:stars}

\subsection{Graphs} \label{subsec:graphstars}

In this section we consider generalized extremal problems of the form $\ex(n,H,S_r)$, forbidding the star with $r$ leaves. Unlike in previous sections where $H$ was a clique, here we consider a broader range of graphs $H$. We use $\cH(G)$ to denote the set of (not necessarily induced) subgraphs of $G$ isomorphic to $H$. 

Given a graph $G$ and a non-empty set of vertices $U \subseteq V(G)$, the \emph{common neighborhood} of $U$ in $G$ is the set of vertices of $G$ adjacent to each vertex in $U$, or equivalently the intersection of the neighbor sets of each vertex of $U$. The common degree of $U$, which we denote by $\cd_G(U)$, is the size of the common neighborhood. Note that $U$ is disjoint from its common neighborhood.

Denote the collection of dominating vertices of a graph $G$ by $\Dom(G)$, and let $\dom(G) = |\Dom(G)|$. Note that for any $U \subseteq \Dom(G)$, $U$ is a clique, and the common neighborhood of $U$ is $V(G) \setminus U$. Additionally, if $U, U' \subseteq \Dom(G)$ and $|U| = |U'|$, then $G-U \cong G-U'$. For $u \le \dom(G)$, we write $\vsub{G}{u}$ to denote the graph that results from removing $u$ dominating vertices from $G$. 

The main result of this section, \cref{thm:local_star}, provides a template for localized bounds on the number of copies of $H$ in a graph based on the number of sets of dominating vertices of a given size contained in $H$. We will focus on the cases where these sets have size one or two, but we provide the theorem in full generality.

\begin{thm}\label{thm:local_star}
Let $H$ be a graph on $t$ vertices with $\dom(H) \ge 1$. 
For every graph $G$ and each $T \in \cH(G)$, define
\[ \theta_G(T) = \max\set{k : \exists v \in V(T) \subseteq V(S) \st V(S) \subseteq N[v] \text{ and } S_k \cong S \subseteq G} = \max\set{d_G(v) : v \in \Dom(T)}, \]
and for each $1 \le u \le \dom(H)$, define
\[ s^u_G(T) = \frac{1}{\binom{\theta_G(T)-u+1}{t-u}}. \]
Then for any $1 \le u \le \dom(H)$, $s_G^u(T)$ is well-defined and decreasing in $\theta_G(T)$, and
\[\sum_{T \in \cH(G)} s^u_G(T) \le \frac{\scount{\vsub{H}{u}}{K_{t-u}}}{\binom{\dom(H)}{u}}\cdot \scount{K_{u}}{G}. \]
Furthermore, equality holds
\begin{itemize}
   \item if and only if $G$ has minimum degree at least $t-1$, if $H=S_{t-1}$ for some $t \ge 3$,
   \item if and only if $G$ contains no isolated vertices and every component of $G$ is regular, if $u=1$ and $H=S_1$, and
   \item if and only if every component of $G$ that contains a $u$-clique is a complete graph on at least $t$ vertices, if $u \ge 2$ or $H$ is not a star.
\end{itemize}
\end{thm}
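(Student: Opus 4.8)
The plan is to prove all three assertions at once by reindexing the sum $\sum_{T\in\cH(G)}s^u_G(T)$ according to a choice of $u$ dominating vertices of $T$, which reduces everything to counting copies of $\vsub{H}{u}$ inside common neighborhoods. For well-definedness and monotonicity: since $H$ has a dominating vertex and $T\cong H$, the copy $T$ has a vertex dominating it, which is adjacent in $G$ to the other $t-1$ vertices of $T$, so $\theta_G(T)\ge t-1$; hence $\theta_G(T)-u+1\ge t-u$ and $\binom{\theta_G(T)-u+1}{t-u}\ge 1>0$, so $s^u_G(T)$ is well-defined, and it is decreasing in $\theta_G(T)$ because $\binom{x}{t-u}$ is strictly increasing for $x\ge t-u-1$ while $\theta_G(T)-u+1\ge t-u$.

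For the inequality, note that for $T\in\cH(G)$ the set $\Dom(T)$ of vertices dominating $T$ has size $\dom(H)$ and spans a clique of $G$. I would write $\binom{\dom(H)}{u}\sum_{T\in\cH(G)}s^u_G(T)=\sum_{T\in\cH(G)}\sum_{D\in\binom{\Dom(T)}{u}}s^u_G(T)$ and set up the bijection $(T,D)\leftrightarrow(D,\,T-D)$ between these pairs and pairs $(D,T')$ in which $D$ is a $u$-clique of $G$ and $T'$ is a copy of $\vsub{H}{u}$ all of whose vertices lie in the common neighborhood of $D$; the inverse sends $(D,T')$ to the complete join $T'\vee D$, which is isomorphic to $H$ (since $H$ is exactly $\vsub{H}{u}$ joined to the clique on $u$ of its dominating vertices) and is a subgraph of $G$ (since every vertex of $T'$ is adjacent in $G$ to every vertex of $D$). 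This yields
\[
\binom{\dom(H)}{u}\sum_{T\in\cH(G)}s^u_G(T)=\sum_{D}\ \sum_{T'}\ s^u_G(T'\vee D),
\]
the outer sum over $u$-cliques $D$ and the inner over copies $T'$ of $\vsub{H}{u}$ in the common neighborhood of $D$. Now fix $D$, write $c=\cd_G(D)$ and $\Delta=\max\{d_G(v):v\in D\}$. Each $v\in D$ has $d_G(v)\ge (u-1)+c$ because $D\setminus\{v\}$ and the common neighborhood of $D$ are disjoint subsets of $N_G(v)$, so $c\le\Delta-u+1$. Also $D\subseteq\Dom(T'\vee D)$, so $\theta_G(T'\vee D)\ge\Delta$ and $s^u_G(T'\vee D)\le 1/\binom{\Delta-u+1}{t-u}$; and the number of copies of $\vsub{H}{u}$ in the common neighborhood of $D$ is at most the number in $K_c$, namely $\binom{c}{t-u}\scount{\vsub{H}{u}}{K_{t-u}}$. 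Combining these with $c\le\Delta-u+1$ and monotonicity of $\binom{\cdot}{t-u}$, the inner sum is at most $\scount{\vsub{H}{u}}{K_{t-u}}$; summing over the $\scount{K_u}{G}$ cliques $D$ and dividing by $\binom{\dom(H)}{u}$ gives the bound.

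For equality I would trace which estimates must be tight for every $u$-clique $D$ and every $T'$: (i) $c=\Delta-u+1$; (ii) $\theta_G(T'\vee D)=\Delta$; (iii) the common neighborhood of $D$ has as many copies of $\vsub{H}{u}$ as $K_c$; and, since $\scount{\vsub{H}{u}}{K_{t-u}}>0$, (iv) $c\ge t-u$, so every $u$-clique extends to a copy of $H$. Condition (i) forces every $v\in D$ to have $N_G(v)=(D\setminus\{v\})\cup(\text{common nbhd of }D)$. When $u\ge 2$, a swapping argument — replace a vertex of $D$ by a common neighbor to form another $u$-clique $D'$, note $D$ and $D'$ overlap so $D\cup(\text{cn of }D)=D'\cup(\text{cn of }D')$, and conclude every vertex of $S:=D\cup(\text{cn of }D)$ has closed neighborhood $S$ — shows the component of $D$ is the clique $S$, of order $u+c\ge t$; this is the third bullet. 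When $u=1$ and $H$ is not a star, $\vsub{H}{1}$ has an edge, so (iii) forces $G[N_G(v)]$ complete for all $v$, i.e. $G$ is a disjoint union of cliques, each of order $\ge t$ by (iv); again the third bullet. When $u=1$ and $H=S_{t-1}$ with $t\ge 3$, (i)–(iii) are automatic ($\vsub{H}{1}$ is edgeless and the star has a unique dominating vertex), leaving only (iv): $\delta(G)\ge t-1$. When $u=1$ and $H=S_1=K_2$, (i) and (iii) are automatic but both ends of the edge $T'\vee\{v\}$ dominate it, so (ii) reads $\max(d_G(v),d_G(w))=d_G(v)$ for $w\in N_G(v)$, which forces $d_G$ constant along every edge (each component regular) while (iv) forbids isolated vertices. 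Each converse is a direct check that (i)–(iv) hold for every $u$-clique in the listed configuration.

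The inequality is routine once the reindexing bijection is in place; the real work is the equality analysis — in particular the $u\ge 2$ swapping argument and the bookkeeping needed to see that \emph{which} of the three estimates is binding, and hence which structural conclusion one draws, depends on whether $\vsub{H}{u}$ has an edge, so the characterization genuinely splits into the three stated bullets. (The case $t=u$, i.e. $H=K_t$ with $s^u_G\equiv 1$, is degenerate — equality then holds for every $G$ — and the third bullet is understood to apply when $t>u$.)
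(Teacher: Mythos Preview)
Your proposal is correct and follows essentially the same route as the paper. Both arguments reindex the weighted sum over pairs $(T,D)$ with $D\in\binom{\Dom(T)}{u}$, bound each inner sum by comparing $\theta_G(T)-u+1$ to $\cd_G(D)$ and by counting copies of $\vsub{H}{u}$ in the common neighborhood against $K_{\cd_G(D)}$, and then analyze equality by tracking when these two estimates are tight; your conditions (i)--(iv) match the paper's equations (3)--(4), your swapping argument for $u\ge 2$ is the paper's Claim~2, and your observation that (iii) forces the common neighborhood to be complete when $\vsub{H}{u}$ has an edge is the paper's Claim~3. The only notable differences are cosmetic: you route the key inequality through the intermediate quantity $\Delta=\max_{v\in D}d_G(v)$ (splitting the paper's single step $\theta_G(T)-u+1\ge\cd_G(D)$ into $\theta_G(T)\ge\Delta$ and $\Delta-u+1\ge\cd_G(D)$), and you explicitly flag the degenerate case $u=t$ where the third bullet's characterization is vacuous.
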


\begin{proof}
First, in any $T \in \cH(G)$, there is a dominating vertex $v$ of $T$ which is the center of a star with (at least) $t-1$ leaves, so $\theta_G(T) \ge t-1$ and $\binom{\theta_G(T)-u+1}{t-u} > 0$. Therefore $s^u_G(T)$ is well-defined and decreasing on $\theta_G(T)$.

Let $T \in \cH(G)$ and $U \subseteq \Dom(T)$ such that $\abs{U} = u$. Recall $U$ is a clique. Thus for any $v \in U$, the vertices in the common neighborhood of $U$ in $G$, together with the vertices $U \setminus \set{v}$, are each adjacent to $v$, forming a copy of $S_{\cd_G(U)+u-1}$ whose center is $v \in V(T)$ and which contains all vertices of $T$ (because $v \in U \subseteq \Dom(T)$).  Therefore we have $\theta_G(T) \ge \cd_G(U) + u-1$, or $\theta_G(T) - u + 1 \ge \cd_G(U)$.

For each $T \in \cH(G)$, there are $\binom{\dom(H)}{u}$ sets $U \in \K[u](G)$ such that each vertex of $U$ is dominating in $T$. For each $U \in \K[u](G)$, the number of copies $T$ of $H$ in $G$ for which $U \subseteq \Dom(T)$ is at most $\binom{\cd_G(U)}{t-u}\scount{\vsub{H}{u}}{K_{t-u}}$: we choose $t-u$ vertices from the common neighborhood of $U$ in $G$ to fill out $T$ and then choose how to embed the vertices of $\vsub{H}{u}$, which can be done in at most as many ways as embedding them into a clique of the same size. Therefore
	\begin{align}
	\binom{\dom(H)}{u}\sum_{T \in \cH(G)} s^u_G(T) &= \sum_{U \in \K[u](G)} \sum_{\substack{T\in \cH(G)\\U \subseteq \Dom(T)}} s^u_G(T) \notag\\
	&= \sum_{U \in \K[u](G)} \sum_{\substack{T\in \cH(G)\\U \subseteq \Dom(T)}} \frac{1}{\binom{\theta_G(T)-u+1}{t-u}} \notag\\
	&\le \sum_{U \in \K[u](G)} \sum_{\substack{T\in \cH(G)\\U \subseteq \Dom(T)}} \frac{1}{\binom{\cd_G(U)}{t-u}} \label{eq:theta}\\ 
	&\le \sum_{U \in \K[u](G)} \scount{\vsub{H}{u}}{K_{t-u}}\frac{\binom{\cd_G(U)}{t-u}}{\binom{\cd_G(U)}{t-u}} \label{eq:count}\\
	&= \scount{\vsub{H}{u}}{K_{t-u}}\scount{K_u}{G} \notag.
	\end{align}
which establishes the inequality for all graphs $G$.

We now consider when equality holds in the bound. For ease of discussion, if $U$ is contained in a copy $T$ of $H$ such that $U \subseteq \Dom(T)$, we say that $T$ is an extension of $U$. Equality in the bound requires equality in equations \eqref{eq:theta} and \eqref{eq:count}. In order for equality to hold in \eqref{eq:theta}, every pair $(U,T)$ where $T$ is an extension of $U$ must satisfy $\cd_G(U) = \theta_G(T)-u+1$. Equality in \eqref{eq:count} requires two conditions: first, for every $U \in \K[u](G)$, $G$ must contain an extension of $U$ as otherwise this choice of $U$ contributes
\[ \sum_{\substack{T\in \cH(G)\\U \subseteq \Dom(T)}} \frac{1}{\binom{\cd_G(U)}{t-u}} = 0\]
instead of
\[ \sum_{\substack{T\in \cH(G)\\U \subseteq \Dom(T)}} \frac{1}{\binom{\cd_G(U)}{t-u}} = \scount{\vsub{H}{u}}{K_{t-u}} > 0. \]
In particular, if equality holds in \eqref{eq:count}, $\cd_G(U) \ge t-u$ for every $U \in \K[u](G)$. Additionally, every $(t-u)$-set in the common neighborhood of $U$ must contain $\scount{\vsub{H}{u}}{K_{t-u}}$ copies of $\vsub{H}{u}$, or equivalently, the common neighborhood of $U$ must contain the same number of copies of $\vsub{H}{u}$ as a clique of the same size.

For each of the three cases delineated in the theorem, we consider the conditions under which equality holds.

\begin{case}\label{case:star_not_edge}
Suppose $H = S_{t-1}$ for some $t \ge 3$.
\end{case}

In this case we have $\dom(H) = 1$ and thus $u=1$. This means that for any $U \in \K[u](G)$, there is $v \in V(G)$ such that $U = \{v\}$, and we have $\cd_G(U) = d_G(v)$. For any copy $T$ of $H$ in $G$, there is a unique $U \subseteq \Dom(T)$ of size $u=1$, and if $U = \{v\}$, then $\theta_G(T) = d_G(v) = \cd_G(U)$. We conclude that equality holds in \eqref{eq:theta} for every graph $G$. If $w$ is the center of $H=S_{t-1}$, then $H-w = \vsub{H}{1}$ is an independent set, so $\scount{\vsub{H}{1}}{K_{t-1}} = 1$. For any graph $G$ and any $v \in V(G)$, if $d_G(v) \ge t-1$, then the number of copies $T$ of $H$ with $v$ at the center is exactly $\binom{d_G(v)}{t-1} = \binom{\cd_G(U)}{t-1}$ and equality holds in \eqref{eq:count}. If $G$ has minimum degree at least $t-1$ then equality holds for every vertex, so equality holds in the bound. On the other hand, if $d_G(v) < t-1$ for any $v \in V(G)$, then there are no extensions of $\{v\}$ and equality does not hold in \eqref{eq:count}. As equality in the bound requires equality for each $U \in \K[u](G)$, we conclude equality holds in this case if and only if $d_G(v) \ge t-1$ for every $v \in V(G)$, which is to say $G$ has minimum degree at least $t-1$.

\begin{case}
Suppose $u=1$ and $H=S_1$.
\end{case}

This case is similar to \cref{case:star_not_edge}: we have $u=1$, so the set of $u$-cliques of $G$ corresponds to the vertices of $G$, $\vsub{H}{1}$ is an independent set so $\scount{\vsub{H}{1}}{K_{t-1}} = 1$, and equality holds in \eqref{eq:count} if and only if each vertex has degree at least $t-1 = 2-1$, i.e. $G$ contains no isolated vertices. The difference between the cases is that $\dom(H) = 2$ when $H=S_1$, and therefore rather than being determined by the degree of a single vertex,
\[ \theta_G(T=\{u,v\}) = \max\{d_G(u), d_G(v)\}. \]
This introduces a circumstance in which equality may not hold in \eqref{eq:theta}: if $T = \{u,v\}$ and $d_G(u) > d_G(v)$, then for $U = \{v\}$,
\[ \frac{1}{\binom{\theta_G(T)}{t-u}} = \frac{1}{d_G(u)} < \frac{1}{d_G(v)} =  \frac{1}{\binom{\cd_G(U)}{t-u}}. \]
Thus equality in \eqref{eq:theta} holds if and only if $d_G(u) = d_G(v)$ for every $T = \{u,v\}$, which is to say each component of $G$ is regular. We conclude a graph $G$ meets the bound if and only if $G$ contains no isolated vertices and each component of $G$ is regular.

\begin{case}
Suppose $u \ge 2$ or that $H$ is not a star.
\end{case}

First we prove the given conditions are sufficient to achieve the bound. When $G$ is a disjoint union of complete graphs on at least $t$ vertices and any number of components without $u$-cliques, let $K_r$, $r \ge t \ge u$, be a component of $G$ that contains a $u$-clique. Then $\theta_G(T) = r-1$ for every $T \in \cH(K_r)$, and $s_G^u(T) = 1/\binom{r-u}{t-u}$. The number of copies of $H$ in this component is $\binom{r}{u}\binom{r-u}{t-u}\scount{\vsub{H}{u}}{K_{t-u}}/\binom{\dom{H}}{u}$, which can be seen by first choosing a $u$-clique of the $K_r$ to act as a selected set of $u$ dominating vertices of $H$, then choosing $t-u$ of the $r-u$ other vertices in the same component, then choosing an embedding of $\vsub{H}{u}$ into those vertices. Each copy of $H$ is counted this way once for each choice of selected set of $u$ dominating vertices of $H$. Therefore
\[
\sum_{T \in \cH(K_r)}s_G^u(T) = \frac{1}{\binom{r-u}{t-u}}\binom{r}{u}\binom{r-u}{t-u}\scount{\vsub{H}{u}}{K_{t-u}}/\binom{\dom(H)}{u} = \frac{\scount{\vsub{H}{u}}{K_{t-u}}}{\binom{\dom(H)}{u}}\scount{K_u}{K_r}.
\]
Note that if a component $C$ of $G$ contains no $u$-cliques, it also contains no copies of $T$, so we have
\[ \sum_{T \in \cH(C)} s^u_C(T) = 0 = \scount{K_u}{C}, \]
and thus $C$ does not contribute to either side of the bound. If $C_1, \ldots, C_k$ are the components of $G$ containing $u$-cliques, then
\[
\sum_{T \in \cH(G)}s_G^u(T) = \sum_{i=1}^k \sum_{T \in \cH(C_i)}s_{C_i}^u(T) = \frac{\scount{\vsub{H}{u}}{K_{t-u}}}{\binom{\dom(H)}{u}}\sum_{i=1}^k \scount{K_u}{C_i} = \frac{\scount{\vsub{H}{u}}{K_{t-u}}}{\binom{\dom(H)}{u}}\scount{K_u}{G}.
\]
We conclude that any graph $G$ in which every component that contains a $u$-clique is a complete graph on at least $t$ vertices meets the bound.

Now if $G$ is a graph meeting the bound, we prove every component of $G$ containing a $u$-clique must be a clique containing at least $t$ vertices. We begin with three claims.

\medskip

\noindent\textbf{Claim 1}: If $G$ is a graph meeting the bound and $U \in \K[u](G)$, then each vertex in $V(G) \setminus U$ that is adjacent to some $v \in U$ must be in the common neighborhood of $U$.

\medskip

Suppose otherwise: there is $v \in U$ and $x$ not in the common neighborhood of $U$ such that $x \sim v$. As $G$ meets the bound, equality holds in \eqref{eq:count}, so we may assume there exists an extension $T$ of $U$. Note that $T \subseteq N[v]$. As $v$ is adjacent the common neighborhood of $U$, each other vertex of $U$, and $x$, we see 
\[ \theta_G(T) = \max\{d_G(w) : w \in \Dom(T)\} \ge d_G(v) \ge \cd_G(U) +u -1 + 1 > \cd_G(U)+u - 1 \]
and equality does not hold in \eqref{eq:theta}, contradicting that $G$ met the bound.

\medskip

\noindent\textbf{Claim 2}: If $u \ge 2$, $G$ is a graph meeting the bound, and $U \in \K[u](G)$, then $U$ and its common neighborhood form a clique in $G$.

\medskip

Suppose $v,w \in U$ and that $x$ and $y$ are vertices in the common neighborhood of $U$ such that $x \not\sim y$. Define $U' = U-v+x$. If there are no extensions $T$ of $U'$, then equality will not hold in \eqref{eq:count}, so assume such a $T$ exists. Then $y$ is not in the common neighborhood of $U'$, as $y \not\sim x$, but $y \sim w \in U \cap U'$, so by the contrapositive of Claim 1, equality does not hold.

\medskip

\noindent\textbf{Claim 3}: If $\vsub{H}{u}$ contains at least one edge, $G$ is a graph meeting the bound, and $U \in \K[u](G)$, then $U$ and its common neighborhood form a clique in $G$.

\medskip

We prove the contrapositive. Assume that some $U \in \K[u](G)$ and its common neighborhood do not form a clique in $G$. Let $a$ and $b$ be non-adjacent vertices in the common neighborhood of $U$ and let $\set{x,y}$ be an edge in $\vsub{H}{u}$. If $G$ contains no extensions $T$ of $U$, then equality does not hold in \eqref{eq:count}, so we may assume such an extension does exist, and therefore $\cd_G(U) \ge t-u = \abs{V(\vsub{H}{u})}$ as the common neighborhood of $U$ contains at least $V(T) \setminus U$. Therefore there exists an injective function $f$ from $V(\vsub{H}{u})$ to the common neighborhood of $U$ such that $f(x) = a$ and $f(y)=b$. Note that $f$ is not a homomorphism, demonstrating that not every injective function from $\vsub{H}{u}$ to the common neighborhood of $U$ is an injective homomorphism. By contrast, every injective function from $\vsub{H}{u}$ to a clique is an injective homomorphism. This means there are fewer injective homomorphisms from $\vsub{H}{u}$ into the common neighborhood of $U$ than into a clique of the same size. For any graphs $P$ and $Q$, the number of injective homomorphisms from $P$ to $Q$ is equal to the number of automorphisms of $P$ times the number of copies of $P$ in $Q$. Therefore there are also fewer copies of $\vsub{H}{u}$ in the common neighborhood of $U$ than there are in a clique of the same size, so as mentioned when discussing the conditions under which equality holds in \eqref{eq:count}, $G$ does not meet the bound.

\medskip

We are now prepared to show that $G$ meets the bound only if every component of $G$ containing a $u$-clique is a complete graph containing at least $t$ vertices. If $G$ is disconnected, we consider each component separately, and if a component $C$ does not contain a $u$-clique, we have seen $C$ does not contribute to either side of the bound and need not be considered. Therefore we may assume $G$ is connected and contains a $u$-clique. Either $u \ge 2$, in which case Claim 2 applies, or $u=1$ and $H$ is not a star. In this case, the graph that remains after removing any one dominating vertex contains an edge, so Claim 3 applies. In either case, any $u$-clique forms a clique with its common neighborhood. Applying Claim 1 to each $u$-set of this clique, no vertex outside of this clique can be adjacent to any vertex contained in the clique, so $G$ is a complete graph. In order for equality to hold in \eqref{eq:count}, there must be at least one extension $T$ of $U$, which requires this complete graph to have at least $t$ vertices.
\end{proof}

\subsubsection{Weighting \texorpdfstring{$t$}{t}-cliques by maximum star size} 

By taking $H=K_t$ and $u \in \set{1,2}$ in \cref{thm:local_star} we obtain the following corollaries. We note that Proposition 1 in \cite{MT22} is the case $t=2$ of \cref{cor:starcliquen}. 
\begin{cor}\label{cor:starcliquen}For every $n$-vertex graph $G$ and every clique size $t \ge 1$,
\[
     \sum_{T \in \K(G)} s^1_G(T) = \sum_{T \in \K(G)}\frac{1}{\binom{\theta_G(T)}{t-1}} \le \frac{n}{t}.
\]
When $t=2$, equality holds if and only if $G$ contains no isolated vertices and each component of $G$ is regular. When $t \ge 3$, equality holds if and only if $G$ is a disjoint union of complete graphs of order at least $t$. 
\end{cor}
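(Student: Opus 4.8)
The plan is to obtain \cref{cor:starcliquen} directly as the special case $H = K_t$, $u = 1$ of \cref{thm:local_star}. First I would check the hypotheses and confirm the weight function matches. Every vertex of a complete graph is a dominating vertex, so $\dom(K_t) = t$; hence $\dom(H) \ge 1$ holds and $u = 1$ satisfies $1 \le u \le \dom(H)$. For $u = 1$ the weight function in \cref{thm:local_star} is
\[ s^1_G(T) = \frac{1}{\binom{\theta_G(T) - 1 + 1}{t-1}} = \frac{1}{\binom{\theta_G(T)}{t-1}}, \]
which is exactly the weight appearing in the corollary, and it is well-defined and decreasing in $\theta_G(T)$ by \cref{thm:local_star}.

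Next I would evaluate the three quantities on the right-hand side of the bound in \cref{thm:local_star} for these parameters. Deleting one dominating vertex from $K_t$ leaves $\vsub{K_t}{1} = K_{t-1}$, so $\scount{\vsub{K_t}{1}}{K_{t-1}} = \scount{K_{t-1}}{K_{t-1}} = 1$; moreover $\binom{\dom(H)}{u} = \binom{t}{1} = t$ and $\scount{K_u}{G} = \scount{K_1}{G} = n$. Substituting into the conclusion of \cref{thm:local_star} yields
\[ \sum_{T \in \K(G)} s^1_G(T) \le \frac{1}{t}\cdot n = \frac{n}{t}, \]
which is the asserted inequality.

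For the equality characterization I would match $H = K_t$ against the three-way case split in \cref{thm:local_star}. When $t = 2$ we have $K_2 = S_1$, so the second bullet applies: equality holds if and only if $G$ has no isolated vertices and every component of $G$ is regular. When $t \ge 3$, the complete graph $K_t$ has at least three vertices and is not a star (any $S_r$ with $r \ge 2$ has non-adjacent leaves, while $K_t$ is complete), so the third bullet applies with $u = 1$; since a $1$-clique is a single vertex, every nonempty component of $G$ contains a $u$-clique, whence equality holds if and only if every component is a complete graph on at least $t$ vertices, i.e.\ $G$ is a disjoint union of complete graphs of order at least $t$. The only real work here is bookkeeping --- correctly identifying $\vsub{K_t}{1} = K_{t-1}$ and the constant $\binom{t}{1} = t$, and recognizing that $K_t$ lands in the $H = S_1$ branch precisely when $t = 2$ and in the ``$H$ not a star'' branch when $t \ge 3$ --- so no idea beyond \cref{thm:local_star} is needed.
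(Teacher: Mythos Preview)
Your proposal is correct and follows essentially the same approach as the paper's own proof: both set $H = K_t$, $u = 1$ in \cref{thm:local_star}, compute $\vsub{K_t}{1} = K_{t-1}$, $\binom{\dom(K_t)}{1} = t$, and $\scount{K_1}{G} = n$, and then match the equality cases by observing that $K_2 = S_1$ while $K_t$ is not a star for $t \ge 3$.
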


\begin{proof}
Let $G$ be a graph on $n$ vertices. We set $H = K_t$ and $u = 1$ in \cref{thm:local_star}, so $s^1_G(T) = \frac{1}{\binom{\theta_G(T)}{t-1}}$. Any vertex $v$ of $K_t$ is a dominating vertex of $K_t$, so by \cref{thm:local_star} we have \[\sum_{T \in \K(G)} \frac{1}{\binom{\theta_G(T)}{t-1}} \le \frac{\scount{\vsub{K_t}{1}}{K_{t-1}}}{\binom{\dom(K_t)}{1}}\cdot \scount{K_{1}}{G} = \frac{n}{t}.\] 
 
When $t=2$, $H = K_2 = S_1$, so the equality condition matches the $u=1$ and $H = S_1$ case in \cref{thm:local_star}. When $t \ge 3$, the equality condition holds as cliques on $t \ge 3$ vertices are not stars and any non-empty component contains a $1$-clique, i.e. a vertex.
\end{proof}

Note that taking $t=1$ in \cref{cor:starcliquen} gives the true, if trivial, statement that an $n$-vertex graph $G$ has at most $n$ vertices, so equality holds for all graphs.

\begin{cor}\label{cor:starcliquem}For every $m$-edge graph $G$ and every clique size $t \ge 2$, 
\[
     \sum_{T \in \K(G)} s^2_G(T) = \sum_{T \in \K(G)}\frac{1}{\binom{\theta_G(T)-1}{t-2}} \le \frac{m}{\binom{t}{2}}.
\]
When $t \ge 3$, equality holds if and only if $G$ is a disjoint union of complete graphs of order at least $t$ and any number of isolated vertices.
\end{cor}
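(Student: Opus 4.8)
The plan is to obtain \cref{cor:starcliquem} as the $u = 2$ specialization of \cref{thm:local_star} applied with $H = K_t$, in exact parallel to the derivation of \cref{cor:starcliquen} from the $u=1$ case. First I would check that the hypotheses line up: \cref{thm:local_star} requires $1 \le u \le \dom(H)$, and since every vertex of $K_t$ is a dominating vertex we have $\dom(K_t) = t$, so $u = 2$ is admissible exactly when $t \ge 2$, matching the range in the corollary. I would then record the quantities appearing on the right-hand side of the theorem's bound for these choices: $\dom(K_t) = t$; the graph $\vsub{K_t}{2}$ obtained by deleting two dominating vertices satisfies $\vsub{K_t}{2} \cong K_{t-2}$, so $\scount{\vsub{K_t}{2}}{K_{t-2}} = 1$; and $\scount{K_2}{G} = m$. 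Substituting $u = 2$ into the definition $s^u_G(T) = 1/\binom{\theta_G(T)-u+1}{t-u}$ gives $s^2_G(T) = 1/\binom{\theta_G(T)-1}{t-2}$, which is the weight in the statement, and \cref{thm:local_star} then yields
\[ \sum_{T \in \K(G)} \frac{1}{\binom{\theta_G(T)-1}{t-2}} \;\le\; \frac{\scount{\vsub{K_t}{2}}{K_{t-2}}}{\binom{\dom(K_t)}{2}} \cdot \scount{K_2}{G} \;=\; \frac{m}{\binom{t}{2}}, \]
which is the desired inequality.

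For the equality characterization when $t \ge 3$, I would invoke the third bullet of \cref{thm:local_star}: since $u = 2 \ge 2$, equality holds if and only if every component of $G$ containing a $u$-clique is a complete graph on at least $t$ vertices. A component containing a $2$-clique is precisely a component with at least one edge, while a component with no edge is a lone isolated vertex; so this condition says exactly that $G$ is a disjoint union of complete graphs of order at least $t$ together with any number of isolated vertices. I would also note why the equality statement is restricted to $t \ge 3$: when $t = 2$ one has $t - u = 0$, so $s^2_G(T) = 1$ for every edge $T$ and the inequality degenerates to $m \le m$, which holds with equality for every graph.

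I do not expect a real obstacle; the entire argument is a matter of correctly instantiating \cref{thm:local_star}. The only points that require a moment of care are the bookkeeping identity $\vsub{K_t}{2} \cong K_{t-2}$ (so that the multiplicative constant $\scount{\vsub{K_t}{2}}{K_{t-2}}$ is $1$) and checking that the theorem's equality clause, stated in terms of components containing $u$-cliques, unwinds cleanly to the ``disjoint union of cliques plus isolated vertices'' description in the regime $t \ge 3$.
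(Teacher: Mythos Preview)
Your proposal is correct and follows essentially the same route as the paper: instantiate \cref{thm:local_star} with $H=K_t$ and $u=2$, compute $\dom(K_t)=t$, $\vsub{K_t}{2}\cong K_{t-2}$, $\scount{K_2}{G}=m$, and read off both the inequality and the equality characterization from the third bullet. The only cosmetic difference is that the paper triggers the third bullet via ``$K_t$ is not a star for $t\ge 3$'' rather than your ``$u\ge 2$''; either condition suffices.
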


\begin{proof}
Let $G$ be a graph on $m$ edges. We set $H = K_t$ and $u = 2$ in \cref{thm:local_star}, so $s^2_G(T) = \frac{1}{\binom{\theta_G(T)-1}{t-2}}$. Any two vertices of $K_t$ are dominating vertices of $K_t$, so by \cref{thm:local_star} we have \[\sum_{T \in \K(G)} \frac{1}{\binom{\theta_G(T)-1}{t-2}} \le \frac{\scount{\vsub{K_t}{2}}{K_{t-2}}}{\binom{\dom(K_t)}{2}}\cdot \scount{K_{2}}{G} = \frac{m}{\binom{t}{2}}.\]

As above, the equality condition holds as cliques on $t \ge 3$ vertices are not stars. As edges are $2$-cliques, all other components must be isolated vertices.
\end{proof}

Similar to \cref{cor:starcliquen}, taking $t=2$ in \cref{cor:starcliquem} gives a true but trivial statement, namely than an $m$-edge graph has at most $m$ edges, so equality holds for all graphs.

\cref{cor:starcliquen} and \cref{cor:starcliquem} in turn imply the following two known theorems on the maximum number of $t$-cliques in bounded-degree graphs having a given number of vertices or a given number of edges, respectively. These theorems have also been proved using essentially the same argument as the one used in \cite{dW07} to give an upper bound on the total number of cliques of all sizes. The first theorem determines $\ex(n,K_t,S_r)$ asymptotically.

\begin{cor}[Wood~\cite{dW07}]\label{cor:woodn}
Let $t \ge 1$ and $G$ be a graph on $n$ vertices having $\Delta(G) \le r-1$. Then
\[ \scount{K_t}{G} \le \frac{n}{r}\binom{r}{t}. \]
For $t \ge 3$, equality holds if and only if $r \divides n$ and $G$ is a disjoint union of copies of $K_r$. 
\end{cor}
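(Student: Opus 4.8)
The plan is to derive this directly from \cref{cor:starcliquen}, in exact analogy with how \cref{thm:zykov}, \cref{thm:luo_path}, and \cref{thm:cc_mex_paths} were each obtained from their localized counterparts. First I would observe that every $T \in \K(G)$ lies in the closed neighborhood of each of its vertices, so any vertex $v \in T$ is the center of a star through all of $T$ with $d_G(v)$ leaves; hence $\theta_G(T) \le \Delta(G) \le r-1$. Since $s^1_G(T) = 1/\binom{\theta_G(T)}{t-1}$ is decreasing in $\theta_G(T)$, this gives $s^1_G(T) \ge 1/\binom{r-1}{t-1}$ for every $t$-clique $T$. Summing over $\K(G)$ and invoking \cref{cor:starcliquen},
\[ \frac{\scount{K_t}{G}}{\binom{r-1}{t-1}} \le \sum_{T \in \K(G)} s^1_G(T) \le \frac{n}{t}, \]
so $\scount{K_t}{G} \le \frac{n}{t}\binom{r-1}{t-1} = \frac{n}{r}\binom{r}{t}$, using the identity $\frac{1}{t}\binom{r-1}{t-1} = \frac{1}{r}\binom{r}{t}$.

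For the equality characterization when $t \ge 3$, I would note that $\scount{K_t}{G} = \frac{n}{r}\binom{r}{t}$ forces both inequalities in the display above to be equalities. Equality in the second, by \cref{cor:starcliquen}, means $G$ is a disjoint union of complete graphs each of order at least $t$; in particular every component contains a $t$-clique. Equality in the first forces $\theta_G(T) = r-1$ for every $T \in \K(G)$. A component isomorphic to $K_s$ with $s \ge t$ contains a $t$-clique $T$ with $\theta_G(T) = s-1$, so $s = r$; thus every component is $K_r$ and $r \divides n$. Conversely, a disjoint union of $n/r$ copies of $K_r$ has exactly $\frac{n}{r}\binom{r}{t}$ copies of $K_t$, so the bound is attained.

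I expect essentially no obstacle: this is a routine "un-localization" of \cref{cor:starcliquen}. The only point needing care is the equality analysis, where one combines the two equality conditions to see that only disjoint unions of $K_r$ remain (and that $r \divides n$ is then automatic); degenerate small cases (for instance $r < t$, where both sides vanish, or $t \le 2$, where no equality claim is made) warrant only a one-line remark.
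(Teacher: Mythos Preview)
Your derivation of the bound is identical to the paper's: both invoke \cref{cor:starcliquen} with $\theta_G(T)\le r-1$ to obtain $\scount{K_t}{G}/\binom{r-1}{t-1}\le n/t$.

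For the equality characterization when $t\ge 3$, your route differs slightly from the paper's. You exploit equality in \emph{both} inequalities of the chain: equality in the localized bound forces $G$ to be a disjoint union of cliques of order at least $t$, and equality in the termwise comparison forces $\theta_G(T)=r-1$ for every $t$-clique, pinning each component to be exactly $K_r$. The paper instead uses only the \cref{cor:starcliquen} equality condition (components are cliques) together with the degree bound (so each clique has at most $r$ vertices), and then rules out a component $K_a$ with $a<r$ by a short counting contradiction: removing such a component leaves a graph on $n-a$ vertices with more than $\frac{n-a}{r}\binom{r}{t}$ copies of $K_t$, violating the bound already established. Your argument is a bit more direct and avoids this auxiliary computation; the paper's argument has the minor advantage of not needing to track the first inequality separately. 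Both are correct and short.
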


\begin{proof}
Let $G$ be a graph on $n$ vertices with maximum degree at most $r-1$. Then $\theta_G(T) \le r-1$ for every $T \in \K(G)$. By \cref{cor:starcliquen} we have \[
   \frac{\scount{K_t}{G}}{\binom{r-1}{t-1}} = \displaystyle\sum_{T \in \K(G)}\frac{1}{\binom{r-1}{t-1}} \le  \displaystyle\sum_{T \in \K(G)}\frac{1}{\binom{\theta_G(T)}{t-1}} \le \frac{n}{t},
\]
so $\scount{K_t}{G} \le \frac{n}{t}\binom{r-1}{t-1} = \frac{n}{r}\binom{r}{t}$. If $t \ge 3$ and equality holds, then every component is a clique. As $G$ has maximum degree at most $r-1$, these cliques have at most $r$ vertices. If some $G$ meeting the bound had a component $C$ with $a < r$ vertices, then $G-C$ would be a graph on $n-a$ vertices with $\frac{n}{r}\binom{r}{t} - \binom{a}{t}$ copies of $K_t$. As $t \ge 3$, we have
\[ a \binom{r}{t} = \frac{ar}{t} \binom{r-1}{t-1} > \frac{ar}{t} \binom{a-1}{t-1} = r\binom{a}{t} \]
and thus $\frac{a}{r}\binom{r}{t} > \binom{a}{t}$, so
\[ \frac{n}{r}\binom{r}{t} - \binom{a}{t} > \frac{n}{r}\binom{r}{t} - \frac{a}{r}\binom{r}{t} = \frac{n-a}{r} \binom{r}{t}\]
which contradicts the bound.
\end{proof}

If we instead fix the number of edges, we determine $\mex(m, K_t, S_r)$ asymptotically.

\begin{cor}[Wood~\cite{dW07}]\label{cor:woodm}
Let $t \ge 2$ and $G$ be a graph on $m$ edges with $\Delta(G) \le r-1$. Then
\[  \scount{K_t}{G} \le \frac{m}{\binom{r}{2}}\binom{r}{t}. \]
For $t \ge 3$, equality holds if and only if $\binom{r}{2} \divides m$ and $G$ is a disjoint union of copies of $K_r$ with any number of isolated vertices. 
\end{cor}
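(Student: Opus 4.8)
The plan is to obtain \cref{cor:woodm} from \cref{cor:starcliquem} exactly as \cref{cor:woodn} was obtained from \cref{cor:starcliquen}. First I would note that if $\K(G) \neq \emptyset$ then $t \le r$: each vertex of a $t$-clique has degree at least $t-1$, while $\Delta(G) \le r-1$ forces $t-1 \le r-1$; so if $t > r$ then $\scount{K_t}{G} = 0 = \binom{r}{t}$ and there is nothing to prove. Assume $t \le r$. For every $T \in \K(G)$ we have $\theta_G(T) \le \Delta(G) \le r-1$, and since $s^2_G$ is decreasing in $\theta_G$ this gives $s^2_G(T) \ge 1/\binom{r-2}{t-2}$. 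Summing over $\K(G)$ and invoking \cref{cor:starcliquem},
\[
\frac{\scount{K_t}{G}}{\binom{r-2}{t-2}} \;=\; \sum_{T \in \K(G)} \frac{1}{\binom{r-2}{t-2}} \;\le\; \sum_{T \in \K(G)} s^2_G(T) \;\le\; \frac{m}{\binom{t}{2}},
\]
so $\scount{K_t}{G} \le \frac{m}{\binom{t}{2}}\binom{r-2}{t-2}$. The stated form then follows from the identity $\binom{t}{2}\binom{r}{t} = \binom{r}{2}\binom{r-2}{t-2}$ (both sides count a $t$-subset of $[r]$ together with a distinguished pair inside it), which rearranges to $\frac{1}{\binom{t}{2}}\binom{r-2}{t-2} = \frac{1}{\binom{r}{2}}\binom{r}{t}$.

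For the equality characterization when $t \ge 3$ I would argue both directions. Sufficiency is a computation: if $\binom{r}{2} \mid m$ and $G$ consists of $k = m/\binom{r}{2}$ disjoint copies of $K_r$ together with any number of isolated vertices, then $\Delta(G) \le r-1$ and $\scount{K_t}{G} = k\binom{r}{t} = \frac{m}{\binom{r}{2}}\binom{r}{t}$. For necessity, equality in \cref{cor:woodm} forces equality throughout the displayed chain. Equality in the right-hand inequality is exactly the equality case of \cref{cor:starcliquem} (and $t \ge 3$ means $K_t$ is not a star), so $G$ is a disjoint union of complete graphs each of order at least $t$ together with isolated vertices. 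Equality in the left-hand inequality forces $\binom{\theta_G(T)-1}{t-2} = \binom{r-2}{t-2}$ with $\theta_G(T)-1 \ge t-2$, hence $\theta_G(T) = r-1$, for every $T \in \K(G)$; since in a component $K_a$ with $a \ge t$ every vertex of a $t$-clique has degree $a-1$, this forces $a = r$. Thus every nontrivial component of $G$ is $K_r$, and counting edges gives $m = \binom{r}{2}$ times the number of such components, so $\binom{r}{2} \mid m$. (Alternatively one could skip the $\theta_G(T) = r-1$ step and mimic the removal argument from the proof of \cref{cor:woodn}: were some component $K_a$ with $t \le a < r$, deleting it leaves a graph on $m - \binom{a}{2}$ edges with more than $\frac{m - \binom{a}{2}}{\binom{r}{2}}\binom{r}{t}$ copies of $K_t$, because $\binom{a}{t} < \frac{\binom{a}{2}}{\binom{r}{2}}\binom{r}{t}$ follows from $\binom{a-2}{t-2} < \binom{r-2}{t-2}$, contradicting the bound.)

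I do not anticipate a genuine obstacle: all the substance lies in \cref{cor:starcliquem}. The points needing care are the binomial identity that rewrites the bound into the claimed shape, the correct treatment of isolated vertices and small components in the equality analysis — note that $K_2$-components are excluded in the equality case precisely because, for $t \ge 3$, they fail the ``order at least $t$'' requirement of \cref{cor:starcliquem} — and recording the degenerate regime $t > r$ so that the equality statement is only asserted where it actually holds.
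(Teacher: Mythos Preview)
Your proposal is correct and derives the inequality exactly as the paper does, via \cref{cor:starcliquem} and the identity $\binom{t}{2}\binom{r}{t}=\binom{r}{2}\binom{r-2}{t-2}$. For the equality characterization your primary argument---reading off $\theta_G(T)=r-1$ from equality in the left-hand inequality, which immediately pins every clique component to $K_r$---is a little more direct than what the paper does: the paper uses only the equality case of \cref{cor:starcliquem} and then runs a removal argument (your stated alternative), but it phrases that removal step through \cref{thm:LKKgraph}, writing the edge count of a small component as $\binom{x}{2}$ for real $x$ and bounding its $t$-cliques by $\binom{x}{t}$. Since the equality case of \cref{cor:starcliquem} already tells us the component is a clique, that Kruskal--Katona machinery is unnecessary, and your integer-$a$ version of the removal argument is the cleaner way to execute it. Your explicit handling of the degenerate regime $t>r$ is also more careful than the paper, which tacitly assumes $t\le r$.
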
 

\begin{proof}
Let $G$ be a graph on $m$ edges with maximum degree at most $r-1$. Again $\theta_G(T) \le r-1$ for every $T \in \K(G)$. By \cref{cor:starcliquem} we have \[
   \frac{\scount{K_t}{G}}{\binom{r-2}{t-2}} = \displaystyle\sum_{T \in \K(G)}\frac{1}{\binom{r-2}{t-2}} \le  \displaystyle\sum_{T \in \K(G)}\frac{1}{\binom{\theta_G(T)-1}{t-2}} \le \frac{m}{\binom{t}{2}},
\]
so $\scount{K_t}{G} \le \frac{m}{\binom{t}{2}}\binom{r-2}{t-2} = \frac{m}{\binom{r}{2}}\binom{r}{t}$. If $t \ge 3$ and equality holds, then every component is a clique. As $G$ has maximum degree at most $r-1$, these cliques have at most $r$ vertices. Suppose some $G$ meeting the bound has a component $C$ with $a < \binom{r}{2}$ edges. Then $a = \binom{x}{2}$ for some real number $x < r$, and $G-C$ is a graph on $m-a$ edges with at least $\frac{m}{\binom{r}{2}}\binom{r}{t} - \binom{x}{t}$ copies of $K_t$ by \cref{thm:LKKgraph}. As $t \ge 3$, we have
\[ a\binom{r}{t} = \binom{x}{2} \binom{r}{t} = \frac{x(x-1)r(r-1)}{2t(t-1)} \binom{r-2}{t-2} > \frac{x(x-1)r(r-1)}{2t(t-1)} \binom{x-2}{t-2} = \binom{r}{2}\binom{x}{t} \]
and thus $\frac{a}{\binom{r}{2}}\binom{r}{t} > \binom{x}{t}$, so
\[ \frac{m}{\binom{r}{2}}\binom{r}{t} - \binom{x}{t} > \frac{m}{\binom{r}{2}}\binom{r}{t} - \frac{a}{\binom{r}{2}}\binom{r}{t} = \frac{m-a}{\binom{r}{2}} \binom{r}{t}\]
which contradicts the bound.
\end{proof}

\subsubsection{Weighting copies of \texorpdfstring{$H$}{H} by maximum star size}

As mentioned, we can apply Theorem~\ref{thm:local_star} to a broader class of graphs $H$ than just cliques. This allows us to prove novel asymptotic results on $\ex(n,H,S_r)$
for any $H$ with at least one dominating vertex and on $\mex(n,H,S_r)$
for $H$ with at least two dominating vertices:

\begin{thm}\label{thm:H}
Let $H$ be a graph on $t$ vertices.
\begin{enumerate}[(i)]
\item If $H$ has at least one dominating vertex, then
\[ \ex(n,H,S_r) = (1-o(1))\scount{H}{\floor{ \tfrac{n}{r}} K_{r}}, \]
and
\item if $H$ has at least two dominating vertices, then
\[ \mex(m,H,S_r) = (1-o(1))\scount{H}{\floor[\Big]{ \tfrac{m}{\binom{r}{2}}} K_{r}}. \]
\end{enumerate}
\end{thm}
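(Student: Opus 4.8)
The plan is to obtain both parts from \cref{thm:local_star} by pairing an explicit $S_r$-free construction (for the lower bound) with the localized inequality applied with $u=1$ in (i) and $u=2$ in (ii) (for the matching upper bound). First note that a vertex dominating $H$ is adjacent to all of $V(H)$, so $H$ is connected; hence any copy of $H$ inside a disjoint union of cliques lies inside a single clique, and
\[ \scount{H}{\floor{n/r}K_r} = \floor{n/r}\scount{H}{K_r} = \floor{n/r}\binom{r}{t}\scount{H}{K_t}, \]
with the analogous identity $\scount{H}{\floor{m/\binom{r}{2}}K_r} = \floor{m/\binom{r}{2}}\binom{r}{t}\scount{H}{K_t}$. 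For the lower bound in (i), the graph $\floor{n/r}K_r$ together with $n-r\floor{n/r}$ isolated vertices is an $n$-vertex $S_r$-free graph (maximum degree $r-1$), so $\ex(n,H,S_r)\ge\floor{n/r}\binom{r}{t}\scount{H}{K_t}$. For (ii), start from $\floor{m/\binom{r}{2}}K_r$ and place the remaining $m-\floor{m/\binom{r}{2}}\binom{r}{2}<\binom{r}{2}$ edges on fresh vertices as any graph of maximum degree at most $r-1$ (a path will do); the result has exactly $m$ edges, is $S_r$-free, and contains at least $\floor{m/\binom{r}{2}}\binom{r}{t}\scount{H}{K_t}$ copies of $H$.

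For the upper bounds, let $G$ be $S_r$-free, so $\Delta(G)\le r-1$ and therefore $\theta_G(T)\le r-1$ for every $T\in\cH(G)$. Applying \cref{thm:local_star} with $u=1$ (valid since $\dom(H)\ge1$), using $s^1_G(T)=1/\binom{\theta_G(T)}{t-1}\ge 1/\binom{r-1}{t-1}$ and $\scount{K_1}{G}=n$, gives
\[ \frac{\scount{H}{G}}{\binom{r-1}{t-1}}\le\sum_{T\in\cH(G)}s^1_G(T)\le\frac{\scount{\vsub{H}{1}}{K_{t-1}}}{\dom(H)}\cdot n, \]
and with $u=2$ (valid since $\dom(H)\ge2$), $s^2_G(T)\ge1/\binom{r-2}{t-2}$, and $\scount{K_2}{G}=m$,
\[ \frac{\scount{H}{G}}{\binom{r-2}{t-2}}\le\sum_{T\in\cH(G)}s^2_G(T)\le\frac{\scount{\vsub{H}{2}}{K_{t-2}}}{\binom{\dom(H)}{2}}\cdot m. \]
To rewrite the right-hand constants in terms of $\scount{H}{K_t}$ I will use the double-counting identity $\binom{\dom(H)}{u}\scount{H}{K_t}=\binom{t}{u}\scount{\vsub{H}{u}}{K_{t-u}}$ — count pairs consisting of a copy of $H$ in $K_t$ and a $u$-subset of its dominating vertices; there are $\binom{\dom(H)}{u}$ such pairs per copy, while each of the $\binom{t}{u}$ $u$-subsets of $V(K_t)$ extends (by joining those $u$ vertices completely to the rest) to exactly $\scount{\vsub{H}{u}}{K_{t-u}}$ copies — together with the elementary identity $\binom{r-u}{t-u}=\binom{t}{u}\binom{r}{t}/\binom{r}{u}$. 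The two displays then simplify to $\scount{H}{G}\le\frac{n}{r}\binom{r}{t}\scount{H}{K_t}$ and $\scount{H}{G}\le\frac{m}{\binom{r}{2}}\binom{r}{t}\scount{H}{K_t}$, respectively.

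Combining with the lower bounds, in (i) we obtain $\floor{n/r}\binom{r}{t}\scount{H}{K_t}\le\ex(n,H,S_r)\le\frac{n}{r}\binom{r}{t}\scount{H}{K_t}$, and since $(n/r)/\floor{n/r}=1+o(1)$ this is exactly $\ex(n,H,S_r)=(1-o(1))\scount{H}{\floor{n/r}K_r}$; part (ii) is identical with $n/r$ replaced by $m/\binom{r}{2}$ throughout. I do not anticipate a genuine obstacle: essentially all of the work has been done in \cref{thm:local_star}. The only points that need care are verifying the combinatorial identity so that the constant $\scount{\vsub{H}{u}}{K_{t-u}}/\binom{\dom(H)}{u}$ coincides with $\scount{H}{K_t}/\binom{t}{u}$, and, in part (ii), padding the extremal construction up to exactly $m$ edges without creating a vertex of degree at least $r$.
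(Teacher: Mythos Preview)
Your proposal is correct and follows essentially the same route as the paper: apply \cref{thm:local_star} with $u=1$ for (i) and $u=2$ for (ii), use $\theta_G(T)\le r-1$ to bound each weight from below, and pair this with the disjoint-$K_r$ construction for the lower bound. Your explicit statement and verification of the double-counting identity $\binom{\dom(H)}{u}\scount{H}{K_t}=\binom{t}{u}\scount{\vsub{H}{u}}{K_{t-u}}$ is exactly the computation the paper sketches when it says ``count copies of $H$ in $\floor{n/r}K_r$ by first choosing a vertex \dots\ to act as a selected dominating vertex''; you have simply factored it through $\scount{H}{K_t}$ rather than $\scount{H}{K_r}$, which is a cosmetic difference.
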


\begin{proof}
Let $G$ be an $S_r$-free graph on $n$ vertices and $m$ edges. Then for each $T \in \cH(G)$, we have $\theta_G(T) \le r-1$ and thus $s^1_G(T) \ge 1/\binom{r-1}{t-1}$ and $s^2_G(T) \ge 1/\binom{r-2}{t-2}$. Therefore, as long as $\dom(H) \ge 1$, let $v$ be a dominating vertex and apply Theorem~\ref{thm:local_star} with $u=1$ to get
\[ \scount{H}{G} \cdot \frac{1}{\binom{r-1}{t-1}} \le \sum_{T \in \cH(G)} s^1_G(T) \le \frac{\scount{\vsub{H}{1}}{K_{t-1}}}{\dom(H)} \cdot n \]
so that
\[ \scount{H}{G} \le n\binom{r-1}{t-1} \frac{\scount{\vsub{H}{1}}{K_{t-1}}}{\dom(H)} = (1-o(1))\scount{H}{\floor{\tfrac{n}{r}} K_{r}},\]
as, when $r \mid n$, we can count copies of $H$ in $\floor{\frac{n}{r}} K_r$ by first choosing a vertex $v$ of $G$ to act as a selected dominating vertex in $H$, then choosing $t-1$ of the $r-1$ other vertices in the same component, then choosing an embedding of $\vsub{H}{1}$ into those vertices (which is independent of the choice of $v$). Each copy of $H$ is counted this way once for each choice of selected dominating vertex in $H$. The asymptotic factor allows for $n$ not divisible by $r$.
Furthermore, as long as $\dom(H) \ge 2$, let $v$ and $w$ both be dominating vertices and apply Theorem~\ref{thm:local_star} with $u=2$ to get
\[ \scount{H}{G} \cdot \frac{1}{\binom{r-2}{t-2}} \le \sum_{T \in \cH(G)} s^2_G(T) \le \frac{\scount{\vsub{H}{2}}{K_{t-1}} }{\dom(H)} \cdot m \]
so that
\[ \scount{H}{G} \le m\binom{r-2}{t-2} \frac{\scount{\vsub{H}{2}}{K_{t-1}}}{\dom(H)} = (1-o(1))\scount{H}{\floor[\Big]{\tfrac{m}{\binom{r}{2}} }K_{r}},
\]
where similarly the asymptotic factor allows for $m$ not divisible by $\binom{r}{2}$.

In both cases we achieve a matching lower bound by taking as many disjoint copies of $K_{r}$ as possible and making the remaining vertices independent or making the remaining edges a matching. (For the values of $n$ and $m$ when we have some remaining vertices or edges, a better lower bound is given by forming a clique with the remaining vertices or a colex graph with the remaining edges.)
\end{proof}

Notice that cliques, stars, and all connected threshold graphs have at least one dominating vertex so are included as possible graphs $H$ in part $(i)$ of \cref{thm:H}.

\subsection{Hypergraphs}\label{subsec:hypergraphs}

We now consider localized bounds for hypergraphs of bounded degree. Recall that a hypergraph is \emph{$q$-uniform} if every edge is a set of $q$ vertices. The degree of a set of vertices $I$, denoted by $d(I)$, is the number of edges $E$ that contain $I$. Letting $i = \abs{I}$, the neighborhood of $I$ is the $(q-i)$-uniform hypergraph $\set{E\setminus I: I \subset E \in E(\cH)}$. For a $q$-uniform hypergraph $\cH$ and $1 \le i < q$, we write $\Delta_i(\cH)$ for the maximum degree $d(I)$ over all sets $I$ of $i$ vertices.

For $t \ge q$, we denote by $K^{(q)}_t$ the complete $q$-uniform hypergraph on $t$ vertices. We write $\K(\cH)$ for the set of $t$-cliques in $\cH$, i.e., $\K(\cH) = \set{S \subseteq V(\cH) : \cH[S] \cong K^{(q)}_t}$. We use the following upper bound on $\scount{K^{(q)}_t}{\cH}$, which is proved in \cite[Theorem 32]{KR22} as an immediate consequence of Lov\'{a}sz' approximate version of the Kruskal-Katona theorem.
\begin{thm}[Lov\'{a}sz \cite{L79}]\label{thm:LKK}
	Let $q, t \in \N$ with $t \ge q$. Let $\cH$ be a $q$-uniform hypergraph. Write the number of edges of $\cH$ in the form $\binom{x}{q}$, where $x \ge q-1$ is real. Then $\scount{K^{(q)}_t}{\cH} \le \binom{x}{t}$.
\end{thm}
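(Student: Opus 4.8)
The plan is to deduce \cref{thm:LKK} from the Lov\'asz ``approximate'' form of the Kruskal--Katona theorem, applied to the family of vertex sets of the $t$-cliques of $\cH$. Recall this form: if $\mathcal{A}$ is a family of $k$-element sets with $\abs{\mathcal{A}} = \binom{z}{k}$ for real $z \ge k-1$, then its lower shadow $\partial\mathcal{A}$, the family of $(k-1)$-element subsets of members of $\mathcal{A}$, satisfies $\abs{\partial\mathcal{A}} \ge \binom{z}{k-1}$. This is exactly the approximate Kruskal--Katona statement cited from \cite{L79, KR22}, and it carries essentially all the weight of the argument.

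First I would let $\mathcal{K} = \K(\cH)$ be the collection of $t$-element vertex sets spanning a copy of $K^{(q)}_t$ in $\cH$; if $\mathcal{K} = \emptyset$ then $\scount{K^{(q)}_t}{\cH} = 0 \le \binom{x}{t}$ and there is nothing to prove, so I may assume $\abs{\mathcal{K}} \ge 1$ and write $\abs{\mathcal{K}} = \binom{y}{t}$ for a real $y$ (the function $z \mapsto \binom{z}{t}$ being continuous and strictly increasing on $[t-1,\infty)$ with value $1$ at $z=t$, so that such a $y \ge t$ exists). The crux is the observation that every $q$-element subset of a $t$-clique is an edge of $\cH$; hence the family of all $q$-element subsets of $V(\cH)$ contained in some member of $\mathcal{K}$ --- equivalently, the $(t-q)$-fold iterated shadow $\partial^{t-q}\mathcal{K}$ --- is a subfamily of $E(\cH)$, so it has size at most $\abs{E(\cH)} = \binom{x}{q}$. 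In the other direction I would apply the Lov\'asz form to $\mathcal{K}$ to get $\abs{\partial\mathcal{K}} \ge \binom{y}{t-1}$; writing $\abs{\partial\mathcal{K}} = \binom{y'}{t-1}$ forces $y' \ge y$ by monotonicity, so a second application yields $\abs{\partial^2\mathcal{K}} \ge \binom{y'}{t-2} \ge \binom{y}{t-2}$, and after $t-q$ steps $\abs{\partial^{t-q}\mathcal{K}} \ge \binom{y}{q}$. Putting the two bounds together gives $\binom{y}{q} \le \binom{x}{q}$, and since $y \ge t \ge q-1$, $x \ge q-1$, and $\binom{\cdot}{q}$ is strictly increasing on $[q-1,\infty)$, this yields $y \le x$; then $\scount{K^{(q)}_t}{\cH} = \abs{\mathcal{K}} = \binom{y}{t} \le \binom{x}{t}$ because $\binom{\cdot}{t}$ is weakly increasing on $\R$, which is the desired bound.

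I do not expect any serious obstacle: the difficulty is entirely inside the cited approximate Kruskal--Katona theorem. The one step that needs to be stated with care is the iterated-shadow inequality $\abs{\partial^{t-q}\mathcal{K}} \ge \binom{y}{q}$, which requires invoking Kruskal--Katona in its real-exponent form at each of the $t-q$ successive levels, with the monotonicity bookkeeping above tracking that the exponent can only increase along the way. Everything else --- the trivial case, the existence of the real $y$, and the two monotonicity comparisons --- uses only the properties of generalized binomial coefficients recorded in \cref{sec:preliminaries}; taking $q = 2$ recovers \cref{thm:LKKgraph}.
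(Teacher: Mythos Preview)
Your argument is correct and is precisely the ``immediate consequence of Lov\'asz' approximate version of the Kruskal--Katona theorem'' that the paper alludes to: the paper does not spell out a proof of \cref{thm:LKK} at all but simply cites \cite[Theorem~32]{KR22} for exactly this deduction. Your iterated-shadow bookkeeping and the monotonicity comparisons are the standard way to carry it out, so there is nothing to add.
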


The following theorem generalizes \cref{cor:starcliquen} to $q$-uniform hypergraphs. Note that when $q=2$, we have $i=1$ and $x(I) = d(I) + i = d(v)+1$ for $I = \set{v}$, and so the function $s(T)$ in the following theorem can be thought of as an extension of the function $s_G^1(T)$ of Theorem~\ref{thm:local_star} to hypergraphs.

\begin{thm} \label{thm:local_kr22}
Let $t \ge q > i \ge 1$ and suppose $\cH$ is a $q$-uniform hypergraph on $n$ vertices. For each $I \in \binom{V(\cH)}{i}$, define $x(I) \ge q-i-1$ by the equation $d(I) = \binom{x(I)-i}{q-i}$, and, for each $T \in \K(\cH)$, define
\[ x(T) = \max\set[\Big]{x(I) : I \in \binom{T}{i}} \quad \text{and} \quad s(T) = \frac{1}{\binom{x(T)-i}{t-i}	}. \]
Then $s(T)$ is well-defined and decreasing as a function of $x(T)$, 
\[\sum_{T \in \K(\cH)}s(T) \le \frac{\binom{n}{i}}{\binom{t}{i}}, \]
and there is an infinite family of hypergraphs that achieve the bound.
\end{thm}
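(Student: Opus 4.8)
The plan is to mimic the proof of \cref{thm:local_star} (specifically the clique case $H=K_t$, $u=i$), replacing the role of the common degree $\cd_G(U)$ of a vertex set with the hypergraph quantity $x(I)$, and replacing the elementary count of $(t-u)$-subsets of a common neighborhood with the Lov\'{a}sz--Kruskal--Katona bound \cref{thm:LKK} applied to the neighborhood hypergraph of $I$. First I would check the preliminary claims: since any $T \in \K(\cH)$ spans a copy of $K^{(q)}_t$, every $i$-subset $I \subseteq T$ has $d(I) \ge \binom{t-i}{q-i}$, hence $x(I) \ge t$, so $x(T) \ge t$ and the falling factorial $\binom{x(T)-i}{t-i}$ is positive; monotonicity of $s$ is immediate from the fact that $\binom{y}{t-i}$ is strictly increasing for $y \ge t-i-1$.

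Next comes the main inequality. I would double-count pairs $(I,T)$ with $I \in \binom{V(\cH)}{i}$, $T \in \K(\cH)$, and $I \subseteq T$. For fixed $T$ there are exactly $\binom{t}{i}$ such $I$, giving the left factor $\binom{t}{i}$. Swapping the order of summation, for each fixed $I \in \binom{V(\cH)}{i}$ I need to bound $\sum_{T \supseteq I} \frac{1}{\binom{x(T)-i}{t-i}}$. The key observation, parallel to the $\theta_G(T) \ge \cd_G(U)+u-1$ step, is that $x(T) \ge x(I)$ for every $T \supseteq I$ by definition of $x(T)$ as a max, so $\frac{1}{\binom{x(T)-i}{t-i}} \le \frac{1}{\binom{x(I)-i}{t-i}}$. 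It then remains to show the number of $t$-cliques $T$ of $\cH$ containing $I$ is at most $\binom{x(I)-i}{t-i}$: a $t$-clique containing $I$ is determined by a $(t-i)$-set $S$ of vertices such that $I \cup S$ spans $K^{(q)}_t$; in particular $S$ must span a $(t-i)$-clique in the neighborhood hypergraph $\cN_I := \{E \setminus I : I \subseteq E \in E(\cH)\}$, which is $(q-i)$-uniform and has $d(I) = \binom{x(I)-i}{q-i}$ edges, so by \cref{thm:LKK} (with parameters $q-i$, $t-i$, and the real number $x(I)-i \ge (q-i)-1$) there are at most $\binom{x(I)-i}{t-i}$ such $S$. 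Combining, $\sum_{T \supseteq I} \frac{1}{\binom{x(T)-i}{t-i}} \le \binom{x(I)-i}{t-i} \cdot \frac{1}{\binom{x(I)-i}{t-i}} = 1$, and summing over the $\binom{n}{i}$ choices of $I$ gives $\binom{t}{i}\sum_{T}s(T) \le \binom{n}{i}$, as desired.

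For the sharpness statement, I would exhibit $\cH = \bigsqcup K^{(q)}_r$, a disjoint union of $n/r$ cliques of order $r$ (for any $r \ge t$ with $r \mid n$). In such $\cH$, every $i$-set $I$ inside a clique-component has neighborhood hypergraph $K^{(q-i)}_{r-i}$, so $d(I) = \binom{r-i}{q-i}$, giving $x(I) = r$; hence $x(T) = r$ for every $T \in \K(\cH)$ and $s(T) = 1/\binom{r-i}{t-i}$. The number of $t$-cliques is $\frac{n}{r}\binom{r}{t}$, so $\sum_T s(T) = \frac{n}{r}\binom{r}{t}/\binom{r-i}{t-i} = \frac{n}{r}\binom{r}{i}/\binom{t}{i} = \binom{n}{i}/\binom{t}{i}$ after simplifying the binomial identity $\binom{r}{t}/\binom{r-i}{t-i} = \binom{r}{i}/\binom{t}{i}$ and $\frac{n}{r}\binom{r}{i} = \binom{n}{i}$ when $r \mid n$ and all components are identical — actually I should be slightly careful here, since $\frac{n}{r}\binom{r}{i}$ equals $\binom{n}{i}$ only in the limit, so I would either restrict to stating an infinite family where equality holds exactly when the relevant identity is exact, or note that the theorem only claims "there is an infinite family of hypergraphs that achieve the bound" and supply the family $\{K^{(q)}_r : r \ge t\}$ itself (a single clique, $n = r$), for which $\sum_T s(T) = \binom{r}{t}/\binom{r-i}{t-i} = \binom{r}{i}/\binom{t}{i} = \binom{n}{i}/\binom{t}{i}$ exactly. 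The cleanest route is to use single cliques $K^{(q)}_r$ for all $r \ge t$, which is manifestly an infinite family meeting the bound with equality.

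I expect the main obstacle to be bookkeeping rather than conceptual: getting the parameter shifts in \cref{thm:LKK} exactly right (the neighborhood hypergraph is $(q-i)$-uniform with edge count written as $\binom{x(I)-i}{q-i}$, so the "real number" in \cref{thm:LKK} is $x(I)-i$ and the clique size is $t-i$, requiring $x(I)-i \ge (q-i)-1$, which follows from $x(I) \ge t \ge q$), and verifying the binomial identity used in the sharpness computation. None of these steps should present a genuine difficulty once the analogy with the graph case is set up.
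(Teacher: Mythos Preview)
Your argument for the main inequality is correct and matches the paper's proof essentially line for line: both double-count pairs $(I,T)$, use $x(T)\ge x(I)$ to replace $s(T)$ by $1/\binom{x(I)-i}{t-i}$, and then apply \cref{thm:LKK} to the $(q-i)$-uniform neighborhood hypergraph of $I$ to bound the number of $t$-cliques containing $I$ by $\binom{x(I)-i}{t-i}$.

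The only difference is in the sharpness example. The paper appeals to $q$-shadows of Steiner systems $S(i,r,n)$ (citing \cite{KR22}) to produce an infinite family, whereas you (after correctly discarding the disjoint-union attempt, since $\tfrac{n}{r}\binom{r}{i}\neq\binom{n}{i}$ in general) settle on the single complete hypergraphs $K^{(q)}_r$ for $r\ge t$. Your family is simpler and the verification is self-contained; the paper's family is richer (it gives extremal examples for arbitrarily large $n$ that are not themselves cliques) but depends on an external reference. Either suffices for the theorem as stated.
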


\begin{proof}
Let $I \in \binom{V(\cH)}{i}$. For every $T \in \K(I)$, we have $x(T) \ge x(I)$ by definition. If $\K(I)$ is nonempty, then $d(I) \ge \binom{t-i}{q-i}$ and $x(I) \ge t$. Therefore every $T \in \K(\cH)$ has $x(T) \ge t$, so $w(T)$ is a decreasing function of $x(T)$. Hence $T \in \K(I)$ implies $w(T) \le 1/\binom{x(I)-i}{t-i}$. Therefore 
\begin{align*}
\binom{t}{i}\sum_{T \in \K(\cH)}s(T) &= \sum_{I \in \binom{V(\cH)}{i}} \sum_{T \in \K(I)}s(T) \\
&\le \sum_{I \in \binom{V(\cH)}{i}} \sum_{T \in \K(I)}\frac{1}{\binom{x(I)-i}{t-i}} \\
&\le \sum_{I \in \binom{V(\cH)}{i}} \frac{\binom{x(I)-i}{t-i}}{\binom{x(I)-i}{t-i}} \\
&= \binom{n}{i},
\end{align*}where the second inequality follows from applying \cref{thm:LKK} to the neighborhood of $I$.

Design theory provides an infinite family of graphs that meet this bound; we direct the reader to \cite{KR22} for more information on such hypergraphs. If $\cH$ is a $q$-shadow of a Steiner system $S(i,r,n)$ for some $r$ then by \cite[Lemma 38(b)]{KR22} we have $x(I)=r$ for every $I$ and $x(T)=r$ for every $T$, so $s(T) = \frac{1}{\binom{r-i}{t-i}}$. By \cite[Lemma 38(a)]{KR22} we have $\scount{K^{(q)}_t}{\cH} = \binom{r}{t}\frac{\binom{n}{i}}{\binom{r}{i}}$. Therefore \[\sum_{T \in \K(\cH)}s(T) = \frac{\binom{r}{t}\binom{n}{i}}{\binom{r}{i}\binom{r-i}{t-i}}=\frac{\binom{n}{i}}{\binom{t}{i}}.\qedhere\]
 \end{proof}

It seems interesting and challenging to characterize all of the extremal $q$-graphs in \cref{thm:local_kr22}. See \cite[Theorem 43]{KR22} for a related characterization of the extremal $q$-graphs in the non-localized theorem.

As a corollary of \cref{thm:local_kr22} we obtain the following theorem of Radcliffe and the first author on maximizing the number of $t$-cliques among bounded-degree $q$-uniform hypergraphs.

\begin{thm}[Kirsch and Radcliffe~\cite{KR22}] \label{thm:kr22}
Let $1 \le i < q \le t$ and suppose $\cH$ is an $q$-uniform hypergraph on $n$ vertices such that $\Delta_i(\cH) \le \binom{x-i}{q-i}$ for some real number $x \ge q$. Then
\[ \scount{K^{(q)}_t}{\cH} \le \frac{\binom{n}{i}}{\binom{x}{i}}\binom{x}{t}. \]
\end{thm}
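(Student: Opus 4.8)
The plan is to obtain \cref{thm:kr22} from \cref{thm:local_kr22} by the same mechanism used to derive \cref{thm:zykov}, \cref{thm:luo_path}, \cref{thm:cc_mex_paths}, \cref{cor:woodn}, and \cref{cor:woodm} from their localized predecessors: a global cap on the size function, fed through the monotonicity of the weight function, produces a uniform lower bound on every weight, which together with the localized sum bound yields the non-localized inequality.

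First I would unpack the hypothesis. The assumption $\Delta_i(\cH) \le \binom{x-i}{q-i}$ says precisely that $d(I) \le \binom{x-i}{q-i}$ for every $I \in \binom{V(\cH)}{i}$. I may assume $\K(\cH) \ne \emptyset$, since otherwise $\scount{K^{(q)}_t}{\cH}=0$ while the claimed right-hand side $\frac{\binom{n}{i}}{\binom{x}{i}}\binom{x}{t}$ is nonnegative (here $\binom{x}{i}>0$ because $x \ge q > i$). Now fix $T \in \K(\cH)$ and $I \in \binom{T}{i}$. As noted in the proof of \cref{thm:local_kr22}, $d(I) \ge \binom{t-i}{q-i} \ge 1$, so the equation $d(I)=\binom{x(I)-i}{q-i}$ genuinely determines $x(I)$ with $x(I)-i \ge q-i$. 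Since $\binom{\cdot}{q-i}$ is strictly increasing on $[q-i-1,\infty)$ and $x-i \ge q-i$, the inequality $d(I)\le\binom{x-i}{q-i}$ forces $x(I)\le x$. Taking the maximum over $I\in\binom{T}{i}$ gives $x(T)\le x$ for every $T\in\K(\cH)$; combined with $x(T)\ge t$ from \cref{thm:local_kr22} this also yields $x\ge t$, so every generalized binomial coefficient appearing below is positive.

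Next, because $s(T)=1/\binom{x(T)-i}{t-i}$ is decreasing in $x(T)$, the bound $x(T)\le x$ gives $s(T)\ge 1/\binom{x-i}{t-i}$ for every $T\in\K(\cH)$. Summing over $\K(\cH)$ and applying \cref{thm:local_kr22},
\[ \frac{\scount{K^{(q)}_t}{\cH}}{\binom{x-i}{t-i}} = \sum_{T \in \K(\cH)} \frac{1}{\binom{x-i}{t-i}} \le \sum_{T \in \K(\cH)} s(T) \le \frac{\binom{n}{i}}{\binom{t}{i}}, \]
so $\scount{K^{(q)}_t}{\cH}\le \frac{\binom{n}{i}}{\binom{t}{i}}\binom{x-i}{t-i}$. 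Finally I would rewrite the right-hand side via the \emph{subset-of-a-subset} identity $\binom{x}{t}\binom{t}{i}=\binom{x}{i}\binom{x-i}{t-i}$, which is valid for the generalized binomial coefficients since both sides are the same polynomial in $x$; dividing by $\binom{t}{i}\binom{x}{i}$ gives $\binom{x-i}{t-i}/\binom{t}{i}=\binom{x}{t}/\binom{x}{i}$, hence $\scount{K^{(q)}_t}{\cH}\le \frac{\binom{n}{i}}{\binom{x}{i}}\binom{x}{t}$.

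This is essentially bookkeeping, so there is no serious obstacle; the only points requiring care are the boundary behavior of the generalized binomial coefficients — namely, confirming that every index set $I$ contributing to $\sum_T s(T)$ satisfies $d(I)>0$ so that $x(I)$ is well defined, disposing of the trivial case $\K(\cH)=\emptyset$, and checking that the polynomial identity used at the end is valid for non-integer $x$.
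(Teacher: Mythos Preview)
Your proposal is correct and follows essentially the same approach as the paper: bound $x(T)\le x$ from the degree hypothesis, use monotonicity of $s$ to get a uniform lower bound on the weights, apply \cref{thm:local_kr22}, and finish with the identity $\binom{x}{i}\binom{x-i}{t-i}=\binom{t}{i}\binom{x}{t}$. The paper's version is terser and omits the boundary checks you include (the empty-clique case and positivity of the generalized binomials), but the argument is the same.
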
 

\begin{proof}[Proof using \cref{thm:local_kr22}] The condition $\Delta_i(\cH) \le \binom{x-i}{q-i}$ implies that $x(I) \le x$ for every $I \in \binom{V(\cH)}{i}$, so $x(T) = \max\set{x(I) : I \in \binom{T}{i}} \le x$ for every $T \in \K(\cH)$. \cref{thm:local_kr22} gives
\[
\frac{\scount{K^{(q)}_t}{\cH}}{\binom{x-i}{t-i}}=\sum_{T\in\K(\cH)}\frac{1}{\binom{x-i}{t-i}} \le \sum_{T\in\K(\cH)}w(T) \le \frac{\binom{n}{i}}{\binom{t}{i}},
\]
so $\scount{K^{(q)}_t}{\cH} \le \frac{\binom{n}{i}}{\binom{t}{i}}\binom{x-i}{t-i} = \frac{\binom{n}{i}}{\binom{x}{i}}\binom{x}{t}$.
\end{proof}

\section{Open Problems} \label{sec:conclusion}

We briefly mention a few additional instances of problems that we believe are amenable to localized extensions.

The following conjecture is a localized form of a theorem of Frohmader \cite{F08}, as phrased in \cite[Theorem 8]{KR22}, on maximizing the number of $t$-cliques among $m$-edge, $K_{r+1}$-free graphs. 
\begin{conj}\label{conj:frohmader}
 Let $t \ge 2$. For each $T \in \K(G)$, define
 \[ \alpha_G(T) = \max\set{k : T \subseteq V(S) \textnormal{ for some } S \subseteq G \st S \cong K_k} \quad \text{and} \quad w'_G(T) = \frac{\binom{\alpha_G(T)}{2}^{t/2}}{\binom{\alpha_G(T)}{t}}. \]
 For every $m$-edge graph $G$, 
	\[
		\sum_{T \in \K(G)} w'_G(T) \le m^{t/2}.
	\]
\end{conj}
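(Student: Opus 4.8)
We outline how one might try to prove \cref{conj:frohmader} and where we expect the difficulty to lie. The plan is to follow the template of the proof of \cref{thm:local_zykov}, with the ambient parameter $n$ replaced by $m$ and $n^t$ replaced by $m^{t/2}$, reducing to the case that $G$ is complete multipartite, where the inequality becomes exactly Maclaurin's inequality on the sizes of the parts. For the routine preliminaries: since $G[T]\cong K_t$ forces $\alpha_G(T)\ge t$, the denominator $\binom{\alpha_G(T)}{t}$ is positive, so $w'_G(T)$ is well-defined; and writing $\alpha := \alpha_G(T)$,
\[ w'_G(T) = \frac{(\alpha(\alpha-1)/2)^{t/2}}{\alpha(\alpha-1)\cdots(\alpha-t+1)/t!} = \frac{t!}{2^{t/2}}\prod_{i=2}^{t-1}\frac{\sqrt{\alpha(\alpha-1)}}{\alpha-i}, \]
which is an empty product (so $w'_G(T)=1$, and the conjecture reduces to the identity $\sum_{e\in E(G)}1 = m$) when $t=2$, and otherwise a product of positive factors that are each strictly decreasing on $\alpha>i$, because $\frac{d}{d\alpha}(\log\alpha+\log(\alpha-1)-2\log(\alpha-i)) = \frac1\alpha+\frac1{\alpha-1}-\frac2{\alpha-i}<0$ for $i\ge 2$. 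So we may assume $t\ge 3$.

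Next I would set up a minimal counterexample and perform two reductions. Suppose the conjecture fails and let $G$ be a counterexample, $\sum_{T\in\K(G)}w'_G(T) > m^{t/2}$, with $m := \abs{E(G)}$ minimum. If some edge of $G$ lies in no $t$-clique, deleting it leaves $\K(G)$ and every $\alpha_G(T)$ unchanged, so it gives a counterexample with $m-1$ edges, contradicting minimality; hence every edge of $G$ lies in a $t$-clique. If $G$ is disconnected with components $G_1,\dots,G_q$ of sizes $m_1,\dots,m_q$, then since $x\mapsto x^{t/2}$ is superadditive on $[0,\infty)$ for $t\ge 2$ we get $\sum_i m_i^{t/2}\le m^{t/2}<\sum_i\sum_{T\in\K(G_i)}w'_{G_i}(T)$, so some component is itself a counterexample with fewer edges, again contradicting minimality; hence $G$ is connected.

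The crux is to show this $G$ must be complete multipartite. As in the proof of \cref{thm:local_zykov}, if it is not, there are vertices $x\sim z$ with $x\not\sim y\not\sim z$, and, setting $w'_G(v):=\sum_{T\in\K(G):\,v\in T}w'_G(T)$ and assuming $w'_G(x)\ge w'_G(z)$, one applies either the move $G\mapsto G-y+x'$ with $N(x')=N(x)$ (when $w'_G(x)>w'_G(y)$) or the move $G\mapsto G-x-z+y'+y''$ with $N(y')=N(y'')=N(y)$ (when $w'_G(x)\le w'_G(y)$); in both cases every surviving $t$-clique has its clique number not increased, hence its $w'$-weight not decreased, and the total weight strictly increases, by exactly the computations in \cref{thm:local_zykov}. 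The obstacle, which is presumably why the statement is still only a conjecture, is that, unlike in \cref{thm:local_zykov}, these moves need not preserve $\abs{E(G)}$: the first changes it by $d(x)-d(y)$ and the second by $1+2d(y)-d(x)-d(z)$. If one could always choose a bad triple together with a move that does not increase the edge count, the argument would close: a move strictly decreasing $\abs{E}$ contradicts minimality of $m$, while a move that preserves $\abs{E}$ (and the number of vertices, which both moves do preserve) strictly increases the total weight within a finite range of values, so iterating must terminate at a complete multipartite graph, treated below, contradicting that we never leave the class of counterexamples. Ruling out the scenario in which every available move strictly increases $\abs{E}$ is the step I expect to be hard; a possibly more robust alternative is to replace symmetrization by a colex-type compression in the spirit of Frohmader's proof of the non-localized bound, checking that each compression does not decrease $\sum_T w'_G(T)$ while not increasing $m$.

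Finally, the complete multipartite case. Let $G$ have nonempty parts of sizes $a_1,\dots,a_r$. If $r<t$ then $\K(G)=\emptyset$ and $\sum_T w'_G(T)=0\le m^{t/2}$. If $r\ge t$, then every $T\in\K(G)$ satisfies $\alpha_G(T)=r$, so $w'_G(T)=\binom{r}{2}^{t/2}/\binom{r}{t}$, and with $\scount{K_t}{G}=e_t(a_1,\dots,a_r)$ and $m=e_2(a_1,\dots,a_r)$ (the second and $t$-th elementary symmetric polynomials of the part sizes) we obtain $\sum_{T\in\K(G)}w'_G(T) = \binom{r}{2}^{t/2}e_t(a_1,\dots,a_r)/\binom{r}{t}$. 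The desired bound $\sum_T w'_G(T)\le m^{t/2}$ is therefore equivalent to $e_t/\binom{r}{t}\le (e_2/\binom{r}{2})^{t/2}$, i.e.\ to the Maclaurin inequality $p_2^{1/2}\ge p_t^{1/t}$ for the normalized symmetric means $p_k=e_k(a_1,\dots,a_r)/\binom{r}{k}$ of the positive reals $a_1,\dots,a_r$, which holds with equality exactly when $a_1=\dots=a_r$. Hence a complete multipartite graph is never a counterexample, which would complete the argument modulo the symmetrization step; moreover this equality analysis suggests that, as in \cref{thm:local_zykov}, the bound of \cref{conj:frohmader} is attained precisely by the balanced complete multipartite graphs with at least $t$ parts.
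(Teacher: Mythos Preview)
The paper does not contain a proof of this statement: it is presented there as an open conjecture, with only a remark that Arag\~{a}o and Souza \cite{AS23} subsequently announced a proof of a generalization. There is thus no argument in the paper to compare your proposal against.

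That said, your outline is an accurate diagnosis. The preliminary claims (well-definedness, monotonicity of $w'_G$ in $\alpha_G(T)$, the triviality at $t=2$) are correct; the minimal-counterexample reductions to ``every edge lies in a $t$-clique'' and ``$G$ is connected'' go through exactly as you describe, using $(m-1)^{t/2}<m^{t/2}$ and superadditivity of $x\mapsto x^{t/2}$; and the complete multipartite case is indeed precisely Maclaurin's inequality $p_t^{1/t}\le p_2^{1/2}$ for the normalized elementary symmetric means of the part sizes. You have also correctly located the genuine gap: Zykov symmetrization does not control the edge count, and neither of the two moves is guaranteed to keep $m$ from increasing, so the argument of \cref{thm:local_zykov} does not transfer. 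This is presumably exactly why the authors left the statement as a conjecture; your proposal is an honest sketch of a plausible line of attack together with its obstruction, not a proof.
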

\noindent After a preprint of this paper was made available, Arag{\~a}o and Souza \cite{AS23} announced a proof of a generalization of \cref{conj:frohmader}.

Many extremal results on paths, beginning with the results of Erd\H{o}s and Gallai~\cite{EG59}, are consequences of extremal theorems regarding cycles. While the family of cycle graphs $\set{C_3,C_4,\ldots}$ does not have the subgraph inclusion property shared by cliques, paths, and stars, these results consider graphs of bounded circumference (that is, maximum cycle length). The techniques in this paper often bounded a weight function by arguing a maximal structure could not be extended; cycles do not allow such arguments, which could make proving localized results more difficult. Nevertheless, we provide the following weight function and conjectures based on results of Luo~\cite{rL18} and Chakraborti and Chen~\cite{CC21}, respectively.

\begin{defn}
Let $t \ge 2$. For each $T \in \K(G)$, define
    \[ \gamma_G(T) = \max\set{k : T \subseteq V(S) \textnormal{ for some } S \subseteq G \st S \cong C_{k}}. \]
\end{defn}

\begin{conj}
Let $t \ge 2$. For each $T \in \K(G)$, define
\[ c_G(T) = \frac{\gamma_G(T)-1}{\binom{\gamma_G(T)}{t}}. \]
Then $c_G(T)$ is well-defined and decreasing in $\gamma_G(T)$, and
\[ \sum_{T \in \K(G)} c_G(T) \le n-1, \]
with equality if and only if each $2$-connected component of $G$ is a complete graph of order at least $t$.
\end{conj}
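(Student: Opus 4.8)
The plan is to reduce to $2$-connected graphs via the block decomposition and then to analyze a longest cycle. First the routine facts. For any $t$-clique $T$, a cyclic ordering of its vertices is a copy of $C_t$ (reading $C_2$ as a single edge when $t=2$), so $\gamma_G(T)\ge t$ and $\binom{\gamma_G(T)}{t}\ge 1$; writing $c_G(T)=t!\big/\bigl(\gamma_G(T)\prod_{i=2}^{t-1}(\gamma_G(T)-i)\bigr)$ shows it is well defined and strictly decreasing in $\gamma_G(T)$ on $\gamma_G(T)\ge t$. Two further facts will be used repeatedly: (a) if $G'\subseteq G$ and $T\subseteq V(G')$ then every cycle of $G'$ is a cycle of $G$, so $\gamma_{G'}(T)\le\gamma_G(T)$ and hence $c_{G'}(T)\ge c_G(T)$; and (b) a clique on at least two vertices lies in a unique block of $G$, and any cycle through its vertices, being $2$-connected, also lies in that block, so $\gamma_G(T)=\gamma_B(T)$ for the block $B$ containing $T$, and therefore $\sum_{T\in\K(G)}c_G(T)=\sum_{\text{blocks }B}\ \sum_{T\in\K(B)}c_B(T)$.

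Next I reduce to the $2$-connected case by induction on $n=|V(G)|$. If $G$ is disconnected, summing the bound over components gives $\sum_{T\in\K(G)}c_G(T)\le n-(\text{number of components})<n-1$; thus equality forces $G$ connected, and the stated equality condition should be read as ``$G$ is connected and every block of $G$ is a complete graph of order at least $t$''. If $G$ is connected with a cut vertex, choose a leaf block $B_0$ meeting the rest of $G$ in a single cut vertex $v$ and set $G'=G-(V(B_0)\setminus\{v\})$, a connected graph on $n-|V(B_0)|+1$ vertices. Then $\K(G)=\K(B_0)\sqcup\K(G')$, with $c_G(T)=c_{B_0}(T)$ for $T\in\K(B_0)$ and $c_G(T)=c_{G'}(T)$ for $T\in\K(G')$ (the block containing such a $T$ is a subgraph of $G'$), so by induction on $G'$ and the $2$-connected bound applied to $B_0$,
\[
\sum_{T\in\K(G)}c_G(T)=\sum_{T\in\K(B_0)}c_{B_0}(T)+\sum_{T\in\K(G')}c_{G'}(T)\le\bigl(|V(B_0)|-1\bigr)+\bigl(n-|V(B_0)|\bigr)=n-1 ;
\]
tracking the two summands separately, and using that the orders-minus-one of the blocks of a connected graph sum to $n-1$, gives the equality characterization. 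It remains to prove: \emph{for a $2$-connected graph $B$ on $N$ vertices, $\sum_{T\in\K(B)}c_B(T)\le N-1$, with equality exactly when $B\cong K_N$ and $N\ge t$.}

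For the $2$-connected case, fix a longest cycle $C$, of length $L$. If $L=N$, every $t$-clique has all of its vertices on the Hamilton cycle $C$, so $\gamma_B(T)=N$ and $c_B(T)=\tfrac{N-1}{\binom{N}{t}}$ for every $T$, whence $\sum_T c_B(T)=\scount{K_t}{B}\cdot\tfrac{N-1}{\binom{N}{t}}\le\binom{N}{t}\cdot\tfrac{N-1}{\binom{N}{t}}=N-1$, with equality iff $\scount{K_t}{B}=\binom{N}{t}$, i.e.\ $B\cong K_N$. The substantive case is $L<N$, where one must show $\sum_T c_B(T)<N-1$: the $t$-cliques contained in $V(C)$ again all have $\gamma_B(T)=L$ and contribute at most $\scount{K_t}{B[V(C)]}\cdot\tfrac{L-1}{\binom{L}{t}}\le L-1$, so it suffices to bound the contribution of the $t$-cliques meeting $V(B)\setminus V(C)$ by strictly less than $N-L$. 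Any such $T$ has a vertex in some component $D$ of $B-V(C)$, hence $T\subseteq V(D)\cup\bigl(N(D)\cap V(C)\bigr)$; because $C$ is longest, the attachments of $D$ on $C$ are well separated (consecutive attachments are joined by arcs of length at least two, with sharper lower bounds in terms of $|V(D)|$ — the classical longest-cycle estimates behind Kopylov's theorem), and this forces any cycle realizing $\gamma_B(T)$ to run along enough arcs of $C$ that $\gamma_B(T)$ is large enough to make the total weight of the $t$-cliques meeting $D$ at most $|V(D)|$, with strict inequality. Summing over the components $D$ and using $\sum_D|V(D)|=N-L$ closes the case, and the strictness leaves $K_N$ as the only $2$-connected extremal example.

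The step I expect to be the main obstacle is exactly this last estimate. In the path and star theorems of the paper a \emph{maximal} structure that cannot be extended pins down the weight of a clique immediately; but a maximal cycle through a clique need not be a longest one, so bounding $\gamma_B(T)$ from below requires the more global analysis of longest cycles in $2$-connected graphs. A clean warm-up is the lemma that $\gamma_B(T)\ge t+1$ whenever $T\neq V(B)$ — splice a Hamilton path of $B[T]$ with a $2$-fan (Menger) from an outside vertex to two vertices of $T$ — but upgrading this to the quantitative bound above, tight enough to land exactly at $N-1$ and to exclude every $2$-connected graph other than $K_N$, is where the real work lies, and it is likely to require adapting the Erd\H{o}s--Gallai/Kopylov circumference machinery to count weighted $t$-cliques rather than edges.
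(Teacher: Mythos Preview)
The paper does not prove this statement: it appears as a \emph{conjecture} in the open-problems section, and the authors explicitly flag that their techniques ``often bounded a weight function by arguing a maximal structure could not be extended; cycles do not allow such arguments, which could make proving localized results more difficult.'' So there is no paper proof to compare your proposal against.

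As an independent attempt, your reduction is sound as far as it goes. The well-definedness and monotonicity computation is correct; the block decomposition and the observation that every $t$-clique (and every cycle through its vertex set) lives in a single block are correct and give the clean splitting $\sum_B(|V(B)|-1)=n-1$ for connected $G$; you are also right that the stated equality condition tacitly requires $G$ to be connected. The Hamiltonian $2$-connected case is handled correctly.

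The genuine gap is exactly the one you name yourself: the non-Hamiltonian $2$-connected case. You assert that the $t$-cliques meeting a component $D$ of $B-V(C)$ contribute total weight at most $|V(D)|$, invoking ``the classical longest-cycle estimates behind Kopylov's theorem,'' but you do not actually prove this, and it is not clear that those estimates suffice. Kopylov-type arguments control the spacing of attachment vertices of $D$ along $C$; they do not directly yield a lower bound on $\gamma_B(T)$ for a clique $T$ that may use several vertices of $D$ and only one or two attachments on $C$, nor do they bound the \emph{number} of such cliques in a way that makes the weights sum to at most $|V(D)|$. Your warm-up lemma $\gamma_B(T)\ge t+1$ is correct but far too weak quantitatively to close this. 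So the proposal is a reasonable outline that correctly isolates the hard step, but it does not resolve it --- which is consistent with the paper leaving the statement open.
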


\begin{conj}
Let $t \ge 2$. For each $T \in \K(G)$, define
\[ c'_G(T) = \frac{\binom{\gamma_G(T)}{2}}{\binom{\gamma_G(T)}{t}}. \]
Then $c'_G(T)$ is well-defined and decreasing in $\gamma_G(T)$, and
\[ \sum_{T \in \K(G)} c'_G(T) \le m, \]
with equality if and only if each $2$-connected component of $G$ is a complete graph of order at least $t$ and any number of isolated vertices.
\end{conj}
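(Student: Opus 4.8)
Throughout I take $t\ge 3$; for $t=2$ every copy of $K_2$ lying on a cycle has $c'_G(T)=\binom{\gamma_G(T)}{2}/\binom{\gamma_G(T)}{2}=1$, so the statement degenerates and I ignore it. First, well-definedness and monotonicity: for $t\ge 3$ any cyclic ordering of the $t$ vertices of $T\in\K(G)$ is a copy of $C_t$ in $G$, so $\gamma_G(T)\ge t$ and $c'_G(T)=\binom{\gamma}{2}/\binom{\gamma}{t}=t!\,/\,\bigl(2\prod_{i=2}^{t-1}(\gamma-i)\bigr)$ with $\gamma=\gamma_G(T)$ is a strictly decreasing function of $\gamma_G(T)$. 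Next I reduce to $2$-connected graphs: every cycle and every $K_t$ ($t\ge 3$) of $G$ is $2$-connected, hence lies inside a single block of $G$, and since distinct blocks meet in at most one vertex while a $K_t$ has $t\ge 3$ vertices, a $K_t$ and every cycle through its vertices lie in the \emph{same} block $B$, so $\gamma_G(T)=\gamma_B(T)$. Because the blocks partition $E(G)$ it suffices to prove $\sum_{T\in\K(G)}c'_G(T)\le|E(G)|$ with equality iff $G=K_r$ for some $r\ge t$, for $G$ $2$-connected; isolated vertices and $K_2$-blocks contribute $0$ to the left side and $0$, resp.\ $1$, to the right side and can never force equality, which is what the ``no bridges'' part of the stated equality condition records.

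So let $G$ be $2$-connected with circumference $c$. If $c=|V(G)|=n$, then every $T\in\K(G)$ lies on a Hamilton cycle, so $\gamma_G(T)=n$ and $\sum_T c'_G(T)=\scount{K_t}{G}\binom{n}{2}/\binom{n}{t}$. Writing $|E(G)|=\binom{x}{2}$ with $x$ real, \cref{lem:LKKgraph} gives $n\ge x$ and \cref{thm:LKKgraph} gives $\scount{K_t}{G}\le\binom{x}{t}$; since $y\mapsto\binom{y}{t}/\binom{y}{2}=2(y-2)\cdots(y-t+1)/t!$ is nondecreasing for $y\ge t-1$, we obtain $\sum_T c'_G(T)\le\binom{x}{t}\binom{n}{2}/\binom{n}{t}\le\binom{x}{2}=|E(G)|$, with equality only when $x=n$ and $\scount{K_t}{G}=\binom{n}{t}$, that is, $G=K_n$. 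This step parallels the cycle case in the proof of \cref{thm:local_mex_path}.

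Suppose instead $c<n$, so $G\ne K_n$ and we want a strict inequality, and induct on $|E(G)|$ (reapplying the block reduction at each stage). If $\delta(G)\le t-2$ then $\scount{K_t}{G}=0$ and we are done, so let $u$ have minimum degree $\delta=d_G(u)\ge t-1$. For $T\in\K(G-u)$ we have $\gamma_G(T)\ge\gamma_{G-u}(T)$, hence $c'_G(T)\le c'_{G-u}(T)$, and the inductive hypothesis on $G-u$ gives $\sum_{T\in\K(G-u)}c'_G(T)\le|E(G)|-\delta$; it therefore suffices to show $\sum_{T\in\K(G),\,u\in T}c'_G(T)<\delta$. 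There are at most $\scount{K_{t-1}}{G[N(u)]}\le\binom{\delta}{t-1}$ such $T$, and --- this is the key input --- each lies on a cycle of length at least $\min(n,2\delta)$, which equals $2\delta$ since Dirac's bound $c\ge\min(n,2\delta)$ together with $c<n$ forces $2\delta<n$. Using that $c'_G$ is decreasing, $\binom{2\delta}{t}\ge 2^t\binom{\delta}{t}$ from \cref{obs:falling_fact}, and $\binom{\delta}{t-1}=\tfrac{t}{\delta-t+1}\binom{\delta}{t}$, the desired inequality reduces to $t(2\delta-1)<2^t(\delta-t+1)$, which holds for all $t\ge 3$ and $\delta\ge t-1$ once the smallest values of $\delta$ --- where $u$ lies in at most $\binom{\delta}{t-1}$, often just one, clique --- are handled by direct computation rather than the crude bound. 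The inequality $\sum_{T\ni u}c'_G(T)<\delta$ is strict whenever $u$ lies in a $K_t$ and reads $0<\delta$ otherwise, so the induction closes and $G$ attains equality only when $c=n$, i.e.\ only for $G=K_n$; tracing this back through the blocks yields exactly the stated characterization.

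The whole argument rests on the claim used above: \emph{in a $2$-connected graph on $n$ vertices with minimum degree $\delta$, every $t$-clique containing a vertex of degree $\delta$ lies on a cycle of length at least $\min(n,2\delta)$}, a clique-strengthening of Dirac's classical lower bound on the circumference. This is the point with no analogue in the path and star sections: as the authors note, a maximal cycle cannot merely be ``extended'', and the only free information, $\gamma_G(T)\ge t$ (the clique is itself a $C_t$), gives no control on $\gamma_G(T)$ whatsoever, so the induction collapses without this lemma. I expect it to be the main obstacle. The plausible routes are a rotation--extension argument in the style of the proof of Dirac's theorem, executed so as to keep a whole clique on the cycle throughout, or an appeal to the Kopylov-type structure theorem describing $2$-connected graphs of bounded circumference --- the extremal graphs $K_a\vee(\overline{K_p}\cup K_{c-2a})$ satisfy the claimed cycle-length bound with equality, which is good evidence that $\min(n,2\delta)$ is the right threshold and that the resulting inequality is sharp precisely on the graphs whose blocks are all cliques.
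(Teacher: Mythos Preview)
The statement you are attempting is a \emph{conjecture} in the paper (listed in the Open Problems section), so there is no proof in the paper to compare against. The authors explicitly flag cycles as resistant to their techniques: ``The techniques in this paper often bounded a weight function by arguing a maximal structure could not be extended; cycles do not allow such arguments, which could make proving localized results more difficult.'' Your proposal should therefore be read as an attack on an open problem, not as a reconstruction of an existing argument.

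Your overall strategy---block decomposition to reduce to the $2$-connected case, the Hamiltonian subcase handled via \cref{lem:LKKgraph} and \cref{thm:LKKgraph} exactly as in the proof of \cref{thm:local_mex_path}, and induction on edges by deleting a minimum-degree vertex otherwise---is natural and mirrors the paper's path arguments closely. The reduction to blocks is correct: every $K_t$ and every cycle through it lie in a single block, and the edge sets of blocks partition $E(G)$.

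There is one small slip: the line ``If $\delta(G)\le t-2$ then $\scount{K_t}{G}=0$'' is false (a graph can have a low-degree vertex and still contain $t$-cliques elsewhere). What you actually need, and what your induction already gives, is that if the chosen minimum-degree vertex $u$ lies in no $K_t$ then $\sum_{T\ni u}c'_G(T)=0<\delta$ and the inductive step goes through strictly. This is a write-up issue, not a structural one.

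The real gap, which you correctly isolate, is the lemma: \emph{in a $2$-connected graph with $c<n$, every $t$-clique containing a minimum-degree vertex lies on a cycle of length at least $2\delta$.} Dirac's theorem gives only that \emph{some} cycle has length $\ge 2\delta$; forcing a prescribed $t$-clique onto such a cycle is a genuinely stronger statement for which you give no argument. Rotation--extension keeps an endpoint, not a clique, on the evolving path, and the Kopylov structure theorem describes extremal graphs rather than guaranteeing long cycles through specified vertex sets in arbitrary $2$-connected graphs. Without this lemma (or a substitute), the induction does not close, and since the paper offers no proof either, the conjecture remains open on your argument. Your numerics downstream of the lemma are essentially fine---in fact the direct bound $\binom{\delta}{t-1}\binom{2\delta}{2}/\binom{2\delta}{t}<\delta$ holds for all $\delta\ge t-1$ without recourse to \cref{obs:falling_fact}, so the ``small $\delta$'' caveat is unnecessary---but this is moot until the cycle-through-clique lemma is established.
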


It may be possible to generalize \cref{cor:starcliquem} to hypergraphs. We make the following conjecture as a localized version of Theorem 51 in \cite{KR22}, analogously to \cref{thm:local_kr22}.
\begin{conj}Let $t \ge q > i \ge 1$ and suppose $\cH$ is a $q$-uniform hypergraph on $m$ edges. For each $I \in \binom{V(\cH)}{i}$, define $x(I) \ge q-i-1$ by the equation $d(I) = \binom{x(I)-i}{q-i}$, and, for each $T \in \K(\cH)$, define
\[ x(T) = \max\set[\Big]{x(I) : I \in \binom{T}{i}} \quad \text{and} \quad s'(T) = \frac{1}{\binom{x(T)-q}{t-q}	}. \]
Then
\[\sum_{T \in \K(\cH)}s'(T) \le \frac{m}{\binom{t}{q}}. \]
\end{conj}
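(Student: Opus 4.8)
\emph{Proof sketch.} This statement is the fixed-number-of-edges companion of \cref{thm:local_kr22}, and the plan is to prove it by the same incidence count between $i$-sets and $t$-cliques, with the only change appearing in the final arithmetic so that $m$ replaces $\binom{n}{i}$. The first step is to record well-definedness and monotonicity of $s'$. If $T\in\K(\cH)$ and $I\subseteq T$ is any $i$-set, then every $q$-subset of $T$ containing $I$ is an edge of $\cH$, so $d(I)\ge\binom{t-i}{q-i}$; writing $d(I)=\binom{x(I)-i}{q-i}$ and using that $\binom{\cdot}{q-i}$ is strictly increasing on $[q-i-1,\infty)$ forces $x(I)\ge t$, hence $x(T)\ge t$. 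In particular $x(T)-q\ge t-q$, so $\binom{x(T)-q}{t-q}>0$ and $s'$ is a strictly decreasing function of $x(T)$ on the range that actually occurs.

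The second step is the double count. Summing the indicator of ``$I\in\binom{T}{i}$'' over all pairs $(I,T)$ with $T\in\K(\cH)$, and noting that each $t$-clique contains exactly $\binom{t}{i}$ $i$-subsets, gives
\[ \binom{t}{i}\sum_{T\in\K(\cH)}s'(T)=\sum_{I\in\binom{V(\cH)}{i}}\ \sum_{\substack{T\in\K(\cH)\\ I\subseteq T}} s'(T). \]
Fix an $i$-set $I$ lying in at least one $t$-clique (the rest contribute nothing). For such a $T$ we have $x(T)\ge x(I)$ by definition, so $s'(T)\le 1/\binom{x(I)-q}{t-q}$ by monotonicity. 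Next I would bound the number of $t$-cliques of $\cH$ containing $I$: the map $T\mapsto T\setminus I$ sends them injectively to $(t-i)$-cliques of the neighborhood of $I$, which is a $(q-i)$-uniform hypergraph with $d(I)=\binom{x(I)-i}{q-i}$ edges, so by \cref{thm:LKK} there are at most $\binom{x(I)-i}{t-i}$ of them. Hence the inner sum is at most $\binom{x(I)-i}{t-i}\big/\binom{x(I)-q}{t-q}$.

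The third step is pure bookkeeping. The ``subset of a subset'' identity $\binom{a}{b}\binom{b}{c}=\binom{a}{c}\binom{a-c}{b-c}$, valid for generalized binomials, gives $\binom{x(I)-i}{t-i}\big/\binom{x(I)-q}{t-q}=\binom{x(I)-i}{q-i}\big/\binom{t-i}{q-i}=d(I)/\binom{t-i}{q-i}$. Summing over all $i$-sets, using $\sum_{I\in\binom{V(\cH)}{i}}d(I)=\sum_{E\in E(\cH)}\binom{q}{i}=m\binom{q}{i}$ together with $\binom{t}{i}\binom{t-i}{q-i}=\binom{t}{q}\binom{q}{i}$, yields
\[ \binom{t}{i}\sum_{T\in\K(\cH)}s'(T)\le\frac{m\binom{q}{i}}{\binom{t-i}{q-i}},\qquad\text{i.e.}\qquad \sum_{T\in\K(\cH)}s'(T)\le\frac{m}{\binom{t}{q}}. \]
A matching infinite family should again come from $q$-shadows of Steiner systems $S(i,r,n)$, where $x(I)=x(T)=r$ for all $i$-sets and $t$-cliques and the counts of edges and of $t$-cliques line up through $\binom{r}{t}\binom{t}{q}=\binom{r}{q}\binom{r-q}{t-q}$, exactly as in \cref{thm:local_kr22}.

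I do not anticipate a genuine obstacle: the argument is a routine transcription of the proof of \cref{thm:local_kr22}, and the only points that need care are (i) verifying $x(I)\ge t$ whenever some $t$-clique contains $I$, which is what legitimizes both the monotonicity step and the application of \cref{thm:LKK} to the neighborhood of $I$, and (ii) the two binomial identities that convert the per-$i$-set estimate into something that telescopes to $m$. The genuinely hard question --- which, tellingly, the conjecture does not pose --- is a clean characterization of the hypergraphs attaining equality, and that is open already for \cref{thm:local_kr22}.
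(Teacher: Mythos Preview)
The paper does not prove this statement: it appears in the Open Problems section as a conjecture, offered ``as a localized version of Theorem 51 in \cite{KR22}, analogously to \cref{thm:local_kr22}'', with no accompanying argument. So there is no ``paper's own proof'' to compare against.

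Your argument, however, looks correct and actually settles the conjecture. The structure is the same double count as in \cref{thm:local_kr22}: well-definedness and monotonicity of $s'$ follow from $x(I)\ge t$ for any $i$-set inside a $t$-clique; the injection $T\mapsto T\setminus I$ into $(t-i)$-cliques of the $(q-i)$-uniform neighborhood of $I$ together with \cref{thm:LKK} bounds the inner sum by $\binom{x(I)-i}{t-i}\big/\binom{x(I)-q}{t-q}$. The new ingredient is the binomial identity $\binom{x(I)-i}{t-i}\binom{t-i}{q-i}=\binom{x(I)-i}{q-i}\binom{x(I)-q}{t-q}$, which collapses that ratio to $d(I)/\binom{t-i}{q-i}$; summing $\sum_I d(I)=m\binom{q}{i}$ and using $\binom{t}{i}\binom{t-i}{q-i}=\binom{t}{q}\binom{q}{i}$ then gives exactly $m/\binom{t}{q}$. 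Both identities are the standard ``subset of a subset'' relation and hold for generalized binomials in the top argument, and the needed positivity $\binom{x(I)-q}{t-q}>0$ follows from $x(I)\ge t$ on the relevant $i$-sets. Your sharpness check via $q$-shadows of Steiner systems also goes through, using $\binom{r}{t}\binom{t}{q}=\binom{r}{q}\binom{r-q}{t-q}$ and the block count $m=\binom{r}{q}\binom{n}{i}/\binom{r}{i}$. In short, you have not merely sketched but given a complete proof of what the paper left open; the authors evidently did not notice that the extra identity makes the \cref{thm:local_kr22} argument go through verbatim for the edge-normalized weight.
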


Finally, we used \cref{thm:local_kr22} to obtain new asymptotically tight bounds on $\ex(n,H,S_r)$ when $H$ has at least one dominating vertex and on $\mex(m,H,S_r)$ when $H$ has at least two dominating vertices. It may be possible to prove similar results for hypergraphs.

\begin{question}Can \cref{thm:H} (or \cref{thm:local_star}) be generalized to the setting of $q$-uniform hypergraphs with bounded maximum $i$-degree, perhaps with $i=1$ or $i=q-1$, in such a way as to obtain new generalized Tur\'{a}n-type results for hypergraphs?
\end{question}

\section*{Acknowledgments} The authors thank Jamie Radcliffe for valuable discussions.

\bibliographystyle{amsplain}
\bibliography{local_extremal}

\providecommand{\bysame}{\leavevmode\hbox to3em{\hrulefill}\thinspace}
\providecommand{\MR}{\relax\ifhmode\unskip\space\fi MR }
\providecommand{\MRhref}[2]{%
  \href{http://www.ams.org/mathscinet-getitem?mr=#1}{#2}
}
\providecommand{\href}[2]{#2}
\begin{thebibliography}{10}

\bibitem{AS16}
Noga Alon and Clara Shikhelman, \emph{Many {$T$} copies in {$H$}-free graphs},
  J. Combin. Theory Ser. B \textbf{121} (2016), 146--172.

\bibitem{AS23}
Lucas {Arag{\~a}o} and Victor {Souza}, \emph{{Localised graph Maclaurin
  inequalities}}, arXiv e-prints (2023), arXiv:2301.13189.

\bibitem{BBL23}
J{\'o}zsef {Balogh}, Domagoj {Brada{\v{c}}}, and Bernard {Lidick{\'y}},
  \emph{{Weighted Tur{\'a}n theorems with applications to Ramsey-Tur{\'a}n type
  of problems}}, arXiv e-prints (2023), arXiv:2302.07859.

\bibitem{BCMM22}
J{\'o}zsef {Balogh}, Ce~{Chen}, Grace {McCourt}, and Cassie {Murley},
  \emph{{Ramsey-Tur{\'a}n Problems with small independence numbers}}, arXiv
  e-prints (2022), arXiv:2207.10545.

\bibitem{bB12}
B{\'e}la Bollob{\'a}s, \emph{Graph theory: an introductory course}, vol.~63,
  Springer Science \& Business Media, 2012.

\bibitem{BG08}
B{\'e}la Bollob{\'a}s and Ervin Gy{\H{o}}ri, \emph{Pentagons vs. triangles},
  Discrete Mathematics \textbf{308} (2008), no.~19, 4332--4336.

\bibitem{dB22}
Domagoj {Brada{\v{c}}}, \emph{{A generalization of Tur{\'a}n's theorem}}, arXiv
  e-prints (2022), arXiv:2205.08923.

\bibitem{CC20}
Debsoumya {Chakraborti} and Da~Qi {Chen}, \emph{{Exact results on generalized
  Erd{\H{o}}s-Gallai problems}}, arXiv e-prints (2020), arXiv:2006.04681.

\bibitem{CC21}
Debsoumya Chakraborti and Da~Qi Chen, \emph{Many cliques with few edges and
  bounded maximum degree}, Journal of Combinatorial Theory, Series B
  \textbf{151} (2021), 1--20.

\bibitem{rD17}
Reinhard Diestel, \emph{Extremal graph theory}, Graph theory, Springer, 2017,
  pp.~173--207.

\bibitem{pE62}
Paul Erd{\H{o}}s, \emph{On the number of complete subgraphs contained in
  certain graphs}, Magyar Tud. Akad. Mat. Kutat{\'o} Int. K{\"o}zl \textbf{7}
  (1962), no.~3, 459--464.

\bibitem{EG59}
Paul Erd{\H{o}}s and Tibor Gallai, \emph{On maximal paths and circuits of
  graphs}, Acta Mathematica Hungarica \textbf{10} (1959), no.~3-4, 337--356.

\bibitem{F08}
Andrew Frohmader, \emph{Face vectors of flag complexes}, Israel Journal of
  Mathematics \textbf{164} (2008), no.~1, 153--164.

\bibitem{dG22}
D{\'a}niel {Gerbner}, \emph{{Paths are Tur{\'a}n-good}}, arXiv e-prints (2022),
  arXiv:2204.07638.

\bibitem{GP22}
D{\'a}niel Gerbner and Cory Palmer, \emph{Some exact results for generalized
  tur{\'a}n problems}, European Journal of Combinatorics \textbf{103} (2022),
  103519.

\bibitem{GS18}
Lior Gishboliner and Asaf Shapira, \emph{A generalized tur{\'a}n problem and
  its applications}, Proceedings of the 50th Annual ACM SIGACT Symposium on
  Theory of Computing, 2018, pp.~760--772.

\bibitem{GPS91}
Ervin Gy{\"o}ri, J{\'a}nos Pach, and Mikl{\'o}s Simonovits, \emph{On the
  maximal number of certain subgraphs in {$K_r$}-free graphs}, Graphs and
  Combinatorics \textbf{7} (1991), no.~1, 31--37.

\bibitem{HHKNR13}
Hamed Hatami, Jan Hladk{\'y}, Daniel Kr{\'a}l', Serguei Norine, and Alexander
  Razborov, \emph{On the number of pentagons in triangle-free graphs}, Journal
  of Combinatorial Theory, Series A \textbf{120} (2013), no.~3, 722--732.

\bibitem{KR22}
Rachel {Kirsch} and Jamie {Radcliffe}, \emph{{Many cliques in bounded-degree
  hypergraphs}}, arXiv e-prints (2022), arXiv:2207.02336.

\bibitem{L79}
L.~Lov\'{a}sz, \emph{Combinatorial problems and exercises}, North-Holland
  Publishing Co., Amsterdam-New York, 1979.

\bibitem{rL18}
Ruth Luo, \emph{The maximum number of cliques in graphs without long cycles},
  Journal of Combinatorial Theory, Series B \textbf{128} (2018), 219--226.

\bibitem{MT22}
David {Malec} and Casey {Tompkins}, \emph{{Localized versions of extremal
  problems}}, arXiv e-prints (2022), arXiv:2205.12246.

\bibitem{MNNRW22}
Natasha {Morrison}, JD~{Nir}, Sergey {Norin}, Pawe{\l} {Rz{a}{\.z}ewski}, and
  Alexandra {Wesolek}, \emph{{Every graph is eventually Tur{\'a}n-good}}, arXiv
  e-prints (2022), arXiv:2208.08499.

\bibitem{RU18}
Jamie {Radcliffe} and Andrew {Uzzell}, \emph{{Stability and Erd{\H{o}}s--Stone
  type results for $F$-free graphs with a fixed number of edges}}, arXiv
  e-prints (2018), arXiv:1810.04746.

\bibitem{pT41}
Paul Tur\'an, \emph{Eine {E}xtremalaufgabe aus der {G}raphentheorie}, Mat. Fiz.
  Lapok \textbf{48} (1941), 436--452.

\bibitem{dW07}
David~R Wood, \emph{On the maximum number of cliques in a graph}, Graphs and
  Combinatorics \textbf{23} (2007), no.~3, 337--352.

\bibitem{aZ49}
Alexander~Aleksandrovich Zykov, \emph{On some properties of linear complexes},
  Matematicheskii sbornik \textbf{66} (1949), no.~2, 163--188.

\end{thebibliography}

\end{document}